\newtheorem{theorem}{Theorem}[section]
\newtheorem{lemma}[theorem]{Lemma}
\theoremstyle{definition}
\theoremstyle{remark}
\numberwithin{equation}{section}
\newcommand{\mmod}[1]{\,\,\text{mod}\,\,#1}
\def\bfa{{\mathbf a}}
\def\bfb{{\mathbf b}}
 \def\bfe{{\mathbf e}}
\def\bfh{{\mathbf h}}
\def\bfx{{\mathbf x}}
\def\bfy{{\mathbf y}}
\def\bfz{{\mathbf z}}
\def\calB{{\mathcal B}} 
\def\calC{{\mathcal C}}
\def\calI{{\mathcal I}}\def\calItil{{\widetilde \calI}}
\def\calJ{{\mathcal J}}
\def\calS{{\mathcal S}}
\def\A{{\mathbb A}}
\def\N{{\mathbb N}}\def\P{{\mathbb P}}
\def\R{{\mathbb R}}
\def\Z{{\mathbb Z}}\def\Q{{\mathbb Q}}
\def\grC{{\mathfrak C}}
\def\grJ{{\mathfrak J}}
\def\grm{{\mathfrak m}}\def\grM{{\mathfrak M}}
\def\grS{{\mathfrak S}}
\def\grC{{\mathfrak C}}
\def\alp{{\alpha}} 
\def\bet{{\beta}}  
\def\gam{{\gamma}} 
\def\Gam{{\Gamma}}
\def\del{{\delta}}
\def\tet{{\theta}}  
\def\kap{{\kappa}}
\def\bfxi{{\boldsymbol \xi}}
\def\bfkap{{\boldsymbol \kap}}
\def\sig{{\sigma}} \def\Sig{{\Sigma}} 
\def\bftau{{\boldsymbol \tau}}
\def\ome{{\omega}} \def\Ome{{\Omega}}
\def\omehat{{\hat{\ome}}}
\def\d{{\partial}}
\def\eps{\varepsilon}
\def\d{{\,{\rm d}}}
\def\bfatil{{\widetilde \bfa}}
\def\bfbtil{{\widetilde \bfb}}
\def\atil{{\tilde a}}
\def\btil{{\tilde b}}
\def\meas{{\rm meas}}
\DeclareMathOperator{\Sing}{Sing}
\DeclareMathOperator{\id}{id}
\DeclareMathOperator{\supp}{supp}
\newenvironment{blue}{\color{blue}}{}
\begin{document}
\title[Counting rational points on hypersurfaces]{Counting rational points on hypersurfaces and higher order expansions}
\author[Damaris Schindler]{Damaris Schindler}
\address{Hausdorff Center for Mathematics, Endenicher Allee 62-64, 53115 Bonn, Germany}
\email{damaris.schindler@hcm.uni-bonn.de}

\subjclass[2010]{11P55 (11D72, 11G35)}
\keywords{forms in many variables, Hardy-Littlewood method, higher order expansions}

\begin{abstract}
We study the number of representations of an integer $n=F(\bfx)$ by a homogeneous form in sufficiently many variables. This is a classical problem in number theory to which the circle method has been succesfully applied to give an asymptotic for the number of such representations where the integer vector $\bfx$ is restricted to a box of side length $P$ for $P$ sufficiently large. In the special case of Waring's problem, Vaughan and Wooley have recently established for the first time a higher order expansion for the corresponding asymptotic formula. Via a different and much more general approach we derive a multi-term asymptotic for this problem for general forms $F(\bfx)$ and give an interpretation for the occurring lower order terms.\par
As an application we derive higher order expansions for the number of rational points of bounded anticanonical height on the projective hypersurface $F(\bfx)=0$ for forms $F(\bfx)$ in sufficiently many variables.
\end{abstract}

\maketitle

\excludecomment{com}

\section{Introduction}
Let $F(x_1,\ldots, x_s)\in\Z[x_1,\ldots, x_s]$ be a homogeneous form of degree $d$, and let $\calB\subset \R^n$ be a box with sides parallel to the coordinate axes. A natural question in number theory is to study the number of solutions to the equation
\begin{equation*}
F(x_1,\ldots, x_s)=n,
\end{equation*}
with $x_i\in\Z$ for $1\leq i\leq s$ and $(x_1,\ldots, x_s)\in P\calB$ for a large real number $P$. We call this counting function $R_\calB(P,n)$ or when the box $\calB$ is clear from the context also $R(P,n)$, and for convenience we use in the following the vector notation $\bfx$ for $(x_1,\ldots, x_s)$. In the situation where the form $F(\bfx)$ is not too singular and the number of variables $n$ is large compared to the degree $d$, the Hardy-Littlewood circle method has been very successful in providing asymptotics for the counting function $R_\calB(P,n)$. A first very general result in this direction was given by Birch in 1962 \cite{Bir62}. If $\dim V^*$ denotes the affine dimension of the singular locus of the form $F(\bfx)=0$, then Birch proves an asymptotic for $R_\calB(P,n)$ as soon as 
\begin{equation}\label{eqn0.0}
s-\dim V^*>(d-1)2^d.
\end{equation}
There have been many refinements of the method since then, most of them developed for the homogeneous problem with $n=0$ or a weighted version of the counting function $R(P,n)$. These results include for example improvements in the cubic case due to Heath-Brown \cite{HB07}, \cite{HB83}, a new version of the circle method by Heath-Brown in \cite{HB96}, improvements in the quartic case by Browning and Heath-Brown \cite{BroHB09} and improvements on the bound (\ref{eqn0.0}) by Browning and Prendiville \cite{BroPreA14}.\par

All of these have in common that they produce an asymptotic formula for $R(P,n)$ (or a weighted version of this counting function) of the form
\begin{equation}\label{eqn0.1}
R(P,n)= \grS(n)\calJ (P^{-d}n) P^{s-d}+O(P^{s-d-\del}),
\end{equation}
for some positive $\del>0$. The value of $\del$ stays unspecified in most applications of the circle method mentioned above. Here $\grS(n)$ is the singular series and $\calJ(n)$ the singular integral. It is a natural question to ask to what extend one might be able to improve the error term in the asymptotic expansion (\ref{eqn0.1}). An inspection of the arguments in Birch's work \cite{Bir62} shows that the minor arc contribution can be forced to be arbitrarily small when the number of variables $s$ is sufficiently large. However, in the classical major arc analysis there seems to be a natural barrier which prevents one from obtaining any better error term than $P^{s-d-1+\eta}$, for some small $\eta >0$. More rigorously, in \cite{Loh96} Loh has studied the error term in Waring's problem. If $R_s(n)$ denotes the number of representations of some natural number $n$ as the sum of $s$ $k$th powers of positive integers, then provided that $s$ is sufficiently large, the circle method delivers an asymptotic formula of the form
\begin{equation*}
R_s(n)\sim \frac{\Gam(1+1/k)^s}{\Gam(s/k)}\grS_s(n) n^{s/k-1}.
\end{equation*}
Loh shows that for $s\geq k+2$, the error term in this expansion is bounded below by
\begin{equation*}
R_s(n)-\frac{\Gam(1+1/k)^s}{\Gam(s/k)}\grS_s(n) n^{s/k-1}= \Ome_-(n^{(s-1)/k-1}).
\end{equation*}
In their recent work \cite{VauWooA14} Vaughan and Wooley were able to explain this behaviour in establishing second and higher order terms in the asymptotic expansion of $R_s(n)$.\par
Their arguments are very specific for the situation of Waring's problem, which means in our language a diagonal hypersurface of the form 
\begin{equation*}
F(\bfx)=x_1^k+\ldots +x_s^k.
\end{equation*}
It is not clear from their work how to generalise this to more general forms $F(\bfx)$, since they heavily use the additive structure and separation of variables in their proof.
Taking a different approach, we establish in this paper an asymptotic expansion for $R(P,n)$ with an arbitrary number of higher order terms given that the form $F$ is not too singular and the number of variables $s$ is sufficiently large.\par
Suppose we aim to count integer points on the variety $F(\bfx)=n$ in a dilate of a box $\calB= \prod_{i=1}^s (a_i,b_i]$. If we have for example $n=0$ and $F(\bfx)$ is of degree $d$ we expect to see a main term of the form $cP^{s-d}$. If we restricted $F$ to one of the faces of the box, for example in setting $x_i=a_i$ or $x_i=b_i$ for some $1\leq i\leq s$, then we expect that this face contributes a term of order of magnitude $P^{s-d-1}$. In the standard formulations and applications of the circle method this term is not visible since it is typically contained in the error term. We are going to make these terms visible in our main Theorem \ref{thm1} below. The coefficients of these lower order terms, which to some extent correspond to lower dimensional faces of the box $\calB$, are again given by products of generalized singular series and singular integrals. We next describe their shape.\par
Let $K\geq 1$. In order to label the different lower order contributions, we introduce the index set $\calI (K)$, which is defined as follows. Let $\calI (K)$ be the set of all tuples $(I_1,I_2,\bftau)$, where $I_1$ and $I_2$ are disjoint subsets of $\{1,\ldots ,s\}$ and $\bftau\in \Z^s_{\geq 0}$ satisfies $\tau_i=0$ if $i\notin I_1\cup I_2$ and for which the condition $|I_1|+|I_2|+|\bftau|< K$ holds. Furthermore we let $\calI = \cup_{K\geq 1} \calI (K)$ be the union over all these sets. Given a tuple $(I_1,I_2,\bftau)\in \calI$ we will always set $I_3= \{1,\ldots, s\} \setminus (I_1\cup I_2)$ and $I_4=\emptyset$.\par
In a more general situation, if $\{1,\ldots, s\}=\cup_{i=1}^4 I_i$ is a partition into four arbitrary disjoint index sets, and $\bfa,\bfb\in \R^{s}$, then we define the $s$-dimensional vector $\sig_{\bfa,\bfb}(\bfx)$ componentwise by 
\begin{align*}
\sig_{\bfa,\bfb}(\bfx)_i=\left\{\begin{array}{ccc}b_i & \mbox{ if } & i\in I_1 \\ a_i & \mbox{ if }& i\in I_2 \\ x_i & \mbox{ if } & i\in I_3\cup I_4.\end{array}\right.
\end{align*}
For a tuple $(I_1,I_2,\bftau)\in \calI$ we now introduce the integral
\begin{equation*}
J_{(I_1,I_2,\bftau)}(\gam)= \int_{\prod_{i\in I_3}[a_i,b_i]} \partial_\bfx^{\bftau}e(\gam F(\bfx))|_{\bfx= \sig_{\bfa,\bfb}(\bfx)}\d \bfx_{I_3}.
\end{equation*}
The generalized singular integral $\calJ_{(I_1,I_2,\bftau)} (n)$ is then given by
\begin{equation*}
\calJ_{(I_1,I_2,\bftau)} (n)= \int_\R J_{(I_1,I_2,\bftau)}(\gam)e(-\gam n) \d\gam,
\end{equation*}
in case this is convergent.\par
For any $l\geq 0$ let $B_l(x)$ be the $l$th Bernoulli polynomial and set $\bet_l(x)=B_l(\{x\})$. In order to introduce the generalized singular series, we need to introduce some versions of the usual exponential sums occurring in the circle method. For $1\leq r<q$ we define
\begin{equation*}
\begin{split}
S_{(I_1,I_2,\bftau)}(P;r,q)= \sum_{0\leq \bfz <q} &e\left(\frac{r}{q}F(\bfz)\right) \left(\prod_{i\in I_1} \frac{(-1)^{\tau_i+1}}{(\tau_i+1)!} \bet_{\tau_i+1}\left(\frac{Pb_i-z_i}{q}\right)\right) \\ & \left(\prod_{i\in I_2} \frac{(-1)^{\tau_i}}{(\tau_i+1)!} \bet_{\tau_i+1}\left(\frac{Pa_i-z_i}{q}\right)\right).
\end{split}
\end{equation*}
The singular series $\grS_{(I_1,I_2,\bftau)}(P,n)$ is then given by
\begin{equation*}
\grS_{(I_1,I_2,\bftau)} (P,n):= \sum_{q=1}^\infty \sum_{\substack{r=1\\ (r,q)=1}}^q q^{-|I_3|+|\bftau|}S_{(I_1,I_2,\bftau)}(P;r,q)e\left(-\frac{r}{q}n\right),
\end{equation*}
in case the series is convergent.\par

Recall that $V^*$ denotes the singular locus of the form $F(\bfx)=0$, which is given in affine $s$-space as the zero set of
\begin{equation*}
\frac{\partial F}{\partial x_i}(\bfx)=0,\quad 1\leq i\leq s.
\end{equation*}

We can now state our main theorem.

\begin{theorem}\label{thm1}
Let $d\geq 2$ and $K\geq 1$, and assume that
\begin{equation}\label{eqn0.3}
s-\dim V^*>(d-1)2^{d-1}(2K^2+2K-2).
\end{equation}
Let $\calB$ be given by $\calB= \prod_{i=1}^s (a_i,b_i]$ where $a_i<b_i$ are real numbers for $1\leq i\leq s$. Then we have the asymptotic expansion
\begin{equation*}
\begin{split}
R_\calB(P,n)&= \sum_{(I_1,I_2,\bftau)\in \calI(K)} \grS_{(I_1,I_2,\bftau)}(P,n)\calJ_{(I_1,I_2,\bftau)} (P^{-d}n) P^{s-|I_1|-|I_2|-|\bftau|-d} \\ &+O\left( P^{s-d-(K-1)-\del}\right),
\end{split}
\end{equation*}
for some $\del >0$. Furthermore, all the singular series $\grS_{(I_1,I_2,\bftau)}(P,n)$ and singular integrals $\calJ_{(I_1,I_2,\bftau)} (P^{-d}n)$ occurring in the summation are absolutely convergent.
\end{theorem}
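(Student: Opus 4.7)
The plan is to refine Birch's implementation of the Hardy--Littlewood circle method by replacing the standard integral approximation on the major arcs with an iterated application of the Euler--Maclaurin summation formula. The resulting boundary terms, indexed by $(I_1,I_2,\bftau)\in\calI(K)$, will turn out to encode precisely the lower-order contributions associated with the various faces of the box $\calB$. I start with the familiar identity
$$R_\calB(P,n) = \int_0^1 S(\alp) e(-\alp n)\, \d\alp,\qquad S(\alp) = \sum_{\bfx \in P\calB \cap \Z^s} e(\alp F(\bfx)),$$
and dissect into major arcs $\grM(q,r)$ with $q \le P^\nu$, $|\alp - r/q| \le P^{-d+\nu}$, and a complementary set of minor arcs $\grm$, where $\nu = \nu(d,K) > 0$ is a small parameter to be chosen.

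On $\grm$ I appeal to a Weyl-type estimate in the spirit of Birch, tuned so as to save $K-1$ more powers of $P$ than is needed for the leading asymptotic. Birch's $(d-1)$-fold differencing followed by Cauchy--Schwarz produces a saving proportional to $(s-\dim V^*)/((d-1)2^{d-1})$ on the exponent of $P$; the hypothesis (\ref{eqn0.3}) is engineered so that this saving exceeds $K-1+\del$ even after allowing for the $\bet^{|\bftau|}$ factors (with $|\bftau|<K$) and the $K$-fold averaging in $\alp$ introduced by the iterated Euler--Maclaurin expansion on the major arcs. Consequently the minor arc contribution will be $O(P^{s-d-(K-1)-\del})$.

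On each major arc, writing $\alp = r/q + \bet$ and splitting $\bfx = \bfz + q\bfy$ with $0 \le \bfz < q$ gives
$$S(r/q + \bet) = \sum_{0\le \bfz < q} e\!\left(\frac{r}{q}F(\bfz)\right) \sum_{\substack{\bfy \in \Z^s\\ \bfz+q\bfy \in P\calB}} e(\bet F(\bfz+q\bfy)).$$
I then apply Euler--Maclaurin with periodic Bernoulli polynomials $B_l(\{x\})$ iteratively in each of the $s$ variables $y_i$. Each such application in the variable $y_i$ produces one of three types of contributions: (a) an integral term in $y_i$, flagging $i \in I_3$; (b) a boundary term at $y_i = (Pb_i-z_i)/q$ weighted by $(-1)^{\tau_i+1} B_{\tau_i+1}(\{(Pb_i-z_i)/q\})/(\tau_i+1)!$ together with a $\tau_i$-th derivative, flagging $i \in I_1$; or (c) the corresponding lower boundary term at $y_i = (Pa_i-z_i)/q$, flagging $i \in I_2$. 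Converting derivatives $\partial_{y_i}$ into $\partial_{x_i}$ via $x_i = z_i + qy_i$ produces a factor $q^{|\bftau|}$, while the Jacobian from $\bfy\to\bfx$ over $I_3$ contributes $q^{-|I_3|}$; together these match the prefactor $q^{-|I_3|+|\bftau|}$ in the definition of $\grS_{(I_1,I_2,\bftau)}(P,n)$. Summing over $\bfz$ fuses the Bernoulli weights with $e(rF(\bfz)/q)$ to yield precisely $S_{(I_1,I_2,\bftau)}(P;r,q)$, while the remaining integral over $\bfx_{I_3}$ (after rescaling $\bfx = P\bfu$ and $\gam = \bet P^d$) becomes $P^{s-|I_1|-|I_2|-|\bftau|-d} J_{(I_1,I_2,\bftau)}(\gam)$. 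Truncating the iterated expansion at $|I_1|+|I_2|+|\bftau|<K$ leaves a remainder whose contribution to $R_\calB(P,n)$ is absorbed into the minor-arc error.

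It remains to complete the $q$-summation to $1\le q<\infty$ and the $\bet$-integration to all of $\R$, each extension producing an admissible error. Absolute convergence of the generalized singular series follows from the standard bound on $S(r,q)$ for non-singular forms (the Bernoulli factors being bounded) together with the budget $|I_1|+|I_2|+|\bftau|<K$ inherent in $\calI(K)$; convergence of the generalized singular integrals is obtained by repeated integration by parts in $\bfx_{I_3}$, using the non-singularity of $\nabla F$ on the integration domain to extract the required decay in $\gam$. The main obstacle in executing this plan is the quantitative minor-arc estimate: one needs to save $K-1$ extra powers of $P$ uniformly in the auxiliary data arising from the higher-order Euler--Maclaurin terms, and it is the need for such uniform savings that forces the quadratic dependence on $K$ appearing in the variable count~(\ref{eqn0.3}).
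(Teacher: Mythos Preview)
Your plan matches the paper's approach in its essentials: circle method dissection, Birch/Weyl on the minor arcs, and an iterated Euler--Maclaurin expansion on the major arcs producing the boundary terms indexed by $(I_1,I_2,\bftau)$, with the $q^{-|I_3|+|\bftau|}$ bookkeeping exactly as you describe.

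Two points where your plan diverges from or blurs the paper's execution. First, the paper does \emph{not} establish decay of $J_{(I_1,I_2,\bftau)}(\gam)$ by integration by parts; since $\nabla_{\bfx_{I_3}}F(\sig_{\bfa,\bfb}(\bfx))$ may vanish inside the box, a naive stationary-phase argument is not available. Instead the paper approximates the integral by a Riemann sum at scale $P$ (with $P$ a large power of $|\gam|$) and applies the same Weyl inequality (Lemma~\ref{Weyl}) used for the singular series, controlling the relevant singular locus via Lemma~\ref{dimsingloci}. Second, your diagnosis of the quadratic $K$-dependence is slightly off: it does not come from any uniformity issue in the minor-arc bound itself, but from balancing the major-arc width $\eta$ against the Euler--Maclaurin remainder. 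The remainder in Lemma~\ref{MA1} contributes $O(P^{s-K-d+(2K+3)\eta})$, which forces $\eta<(2K+3)^{-1}$; but the minor-arc and series-completion savings are only of size $\eta\cdot 2^{-d+1}(s-\sig)/(d-1)$, so to save $K-1$ one needs $2^{-d+1}(s-\sig)/(d-1)\gtrsim K(2K+3)$, whence~(\ref{eqn0.3}).
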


Note that the main term for $I_1=I_2=\emptyset$ and $\bftau=\mathbf{0}$ is excatly the same as the main term in the asymptotic expansion (\ref{eqn0.1}), and $\grS_{(\emptyset,\emptyset,\mathbf{0})}(P,n) =\grS(n)$ and $\grJ_{(\emptyset,\emptyset,\mathbf{0})}(n)=\grJ (n)$ reduce to the classical singular series and singular integral as they appear in (\ref{eqn0.1}).\par
The main new ingredient in the proof of Theorem \ref{thm1} is several applications of Euler-MacLaurin's summation formula in the analysis of the major arcs. These replace the rather crude estimates in the traditional treatment as for example in Lemma 5.1 in \cite{Bir62} and are the key to understanding the major arc contribution in more depth.\par
As pointed out in \cite{VauWooA14}, already in the study of the higher order asymptotic expansion in Waring's problem the singular series occurring in the higher order terms do in general not have an interpretation as an Euler product due to the presence of the Bernoulli polynomials, and hence are difficult to understand. In section \ref{SingSeriesII} we use the multiplication theorem for Bernoulli polynomials to give some interpretation to the singular series $\grS_{(I_1,I_2,\bftau)}(P,n)$, see Lemma \ref{S2} and Lemma \ref{S3}. In particular, we expect that Lemma \ref{S3} turns out to be useful in proving that some of the singular series $\grS_{(I_1,I_2,\bftau)}$ do not vanish.\par
In section \ref{SingIntII} we rewrite the singular integrals $\calJ_{(I_1,I_2,\bftau)} (P^{-d}n)$ in a different way such that we can give a satisfactory interpretation to them. In particular, the singular integral $\calJ_{(I_1,I_2,\bftau)}(n)$ can be viewed as some partial multiple derivative of the function in the variables $x_{i}, i\in I_1\cup I_2$ describing the volume of the bounded piece of the hypersurface $F(\bfx)=n$ inside the box $\prod_{i\in I_3}[a_i,b_i]$, at the point $\bfx_{I_1}=\bfb_{I_1}$ and $\bfx_{I_2}=\bfa_{I_2}$.\par
For $s$ sufficiently large and the special case where $F(\bfx)=\sum_{i=1}^s x_i^d$, we note that the conclusion of Theorem \ref{thm1} reduces to the conclusions of Theorem 1.1 and Theorem 1.2 in \cite{VauWooA14}, and generalizes Theorem 1.2 in \cite{VauWooA14} for the case of odd degree $d$ to an arbitrary number of lower order terms. We compare our results to those obtained in \cite{VauWooA14} in section \ref{comparison}. In particular, we obtain in this way examples stemming from Waring's problem, where one can show that many of the lower order terms actually exist, i.e. are non-zero.\par
\medskip
Alternatively to studying the counting function $R(P,n)$ one could introduce a weighted version of this counting function. If $\ome (\bfx)$ is a smooth and compactly supported weight function and $S_\ome(\alp)= \sum_{\bfx\in \Z^s} \ome(P^{-1}\bfx)e(\alp F(\bfx))$, then this would be given by
\begin{equation*}
R^{(\ome)}(P,n)= \int_0^1 S_\ome(\alp )e(-\alp n)\d\alp.
\end{equation*}
Slight modifications of our proof of Theorem \ref{thm1} show that these techniques establish an asymptotic formula of the form
\begin{equation*}
R^{(\ome)}(P,n)= \grS(n) \calJ(P^{-d}n) P^{s-d} +O\left(P^{-s-d-(K-1)-\del}\right),
\end{equation*}
under the assumption that (\ref{eqn0.3}) holds. Hence all the lower order terms for $R^{(\ome)}(P,n)$ vanish identically due to the smooth cut-off function $\ome(\bfx)$.\par

\medskip
Finally, we apply Theorem \ref{thm1} to give higher order expansions for the number of rational points of bounded anticanonical height on certain hypersurfaces of low degree in projective space. In the notation above set $n=0$ and let $F(\bfx)$ be as before a homogeneous polynomial of degree $d$. Then $F(\bfx)=0$ defines a hypersurface $X\subset \P^{s-1}$ of degree $d$. For a rational point $\bfx\in X(\Q)$ given by a representative $\bfx\in \Z^s$ with coprime coordinates $\gcd(x_1,\ldots, x_s)=1$, we define its naive height as
\begin{equation*}
H(\bfx)=\max_{1\leq i\leq s} |x_i|.
\end{equation*}
A central object of study is the counting function 
\begin{equation*}
N_X(P)=\sharp\{ \bfx\in X(\Q): H(\bfx)\leq P\}.
\end{equation*}
Let $\calB=[-1,1]^s$. Via a M\"obius inversion one can express the counting function $N_X(P)$ as
\begin{equation*}
N_X(P)=\frac{1}{2}\sum_{e=1}^\infty \mu(e) \left(R_\calB(e^{-1}P,0)-1\right).
\end{equation*}
Note here that the sum is in fact finite since $R_\calB(e^{-1}P,0)=1$ for $e>P$. In comparison to the usual applications of M\"obius inversion in this setting, we observe that our generalized singular series still depend on $P$. Hence we introduce for $(I_1,I_2,\bftau)\in \calI$ the modified versions 
\begin{equation*}
\widetilde{\grS}_{(I_1,I_2,\bftau)}(P)= \frac{1}{2}\sum_{e=1}^\infty \mu(e) e^{-(s-|I_1|-|I_2|-|\bftau|-d)}\grS_{(I_1,I_2,\bftau)}(e^{-1}P,0),
\end{equation*}
which are absolutely convergent by Lemma \ref{singseries}. As a consequence of Theorem \ref{thm1} we then obtain the following result.

\begin{theorem}\label{thm2}
Let $d\geq 2$ and $K\geq 1$, and assume that (\ref{eqn0.3}) holds. Then one has
\begin{equation*}
\begin{split}
N_X(P)&= \sum_{(I_1,I_2,\bftau)\in \calI(K)} \widetilde{\grS}_{(I_1,I_2,\bftau)}(P)\calJ_{(I_1,I_2,\bftau)} (0) P^{s-|I_1|-|I_2|-|\bftau|-d} \\ &+O\left( P^{s-d-(K-1)-\del}\right),
\end{split}
\end{equation*}
for some $\del >0$.
\end{theorem}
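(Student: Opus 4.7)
The approach is to substitute the expansion from Theorem~\ref{thm1} into the M\"obius inversion
\[
N_X(P)=\frac{1}{2}\sum_{e=1}^{\infty}\mu(e)\bigl(R_\calB(e^{-1}P,0)-1\bigr),
\]
noting first that $R_\calB(e^{-1}P,0)-1=0$ whenever $e^{-1}P<1$, so the $e$-sum is effectively over $1\le e\le P$. I fix a constant $P_0$ large enough that Theorem~\ref{thm1} applies to $R_\calB(P',0)$ whenever $P'\ge P_0$, and split the range at $e=P/P_0$: the ``asymptotic range'' $1\le e\le P/P_0$ is where Theorem~\ref{thm1} supplies the main contribution, while the ``bounded range'' $P/P_0<e\le P$ will contribute only admissible error.

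On the asymptotic range I insert the expansion given by Theorem~\ref{thm1} with $n=0$ and $P$ replaced by $e^{-1}P$. Since $\calJ_{(I_1,I_2,\bftau)}((e^{-1}P)^{-d}\cdot 0)=\calJ_{(I_1,I_2,\bftau)}(0)$, interchanging the two finite summations shows that each index $(I_1,I_2,\bftau)\in\calI(K)$ contributes
\[
P^{s-|I_1|-|I_2|-|\bftau|-d}\calJ_{(I_1,I_2,\bftau)}(0)\cdot\frac{1}{2}\sum_{1\le e\le P/P_0}\mu(e)\,e^{-(s-|I_1|-|I_2|-|\bftau|-d)}\grS_{(I_1,I_2,\bftau)}(e^{-1}P,0).
\]
By the absolute convergence asserted for $\widetilde{\grS}_{(I_1,I_2,\bftau)}(P)$, which the paper attributes to Lemma~\ref{singseries}, I extend the inner sum to all $e\ge 1$ at the cost of a tail that will be absorbed into the final error.

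For the error contribution of Theorem~\ref{thm1} accumulated over the asymptotic range, I estimate
\[
\sum_{1\le e\le P/P_0}(e^{-1}P)^{s-d-(K-1)-\del}\ll P^{s-d-(K-1)-\del}\sum_{e\ge 1}e^{-(s-d-K+1-\del)},
\]
and the series on the right converges because hypothesis (\ref{eqn0.3}) forces $s$ to exceed $d+K$ by a wide margin; in particular $s-d-K+1-\del>1$. The tail of $\widetilde{\grS}_{(I_1,I_2,\bftau)}(P)$ beyond $P/P_0$ is bounded by $O(P)$ by the same exponent inequality and the uniform boundedness of $\grS_{(I_1,I_2,\bftau)}(e^{-1}P,0)$; the bounded range contributes $O(P)$ since $R_\calB(e^{-1}P,0)=O(1)$ whenever $e^{-1}P\le P_0$; and the $-1$ term produces $-\frac{1}{2}\sum_{e=1}^{P}\mu(e)=O(P)$. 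Under (\ref{eqn0.3}) all of these are dominated by $P^{s-d-(K-1)-\del}$.

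The principal technical point is ensuring that the implicit constant in Theorem~\ref{thm1} is uniform in the scaling parameter $e^{-1}P$ as $e$ varies. This should be inherent in the circle method proof of Theorem~\ref{thm1}: its implicit constants depend only on the form $F$, the box $\calB$, and the parameter $K$, while the $P$-dependence that enters the singular series through the fractional parts $\{Pb_i/q\}$ and $\{Pa_i/q\}$ in the Bernoulli-polynomial factors $\bet_{\tau_i+1}$ is harmless because those factors are bounded in absolute value. With this uniformity in hand, the reassembly of the main terms is exactly the expansion claimed, and the remaining work is only the bookkeeping described above.
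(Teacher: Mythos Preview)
Your proposal is correct and follows essentially the same route as the paper: insert the expansion of Theorem~\ref{thm1} into the M\"obius inversion, swap the finite sums, complete the $e$-sum to recover $\widetilde{\grS}_{(I_1,I_2,\bftau)}(P)$ via Lemma~\ref{singseries}, and check that all remainders are $O(P^{s-d-(K-1)-\del})$. The paper's version is terser---it applies Theorem~\ref{thm1} for every $e\le P$ without introducing a threshold $P_0$, absorbing both the $-1$ and the case of bounded $e^{-1}P$ directly into the $O((e^{-1}P)^{s-d-(K-1)-\del})$ error---whereas you make the split at $P/P_0$ and the uniformity of the implicit constant explicit; this is a matter of presentation rather than a difference in strategy.
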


We remark that in the case $n=0$, which in some sense corresponds to Theorem \ref{thm2}, the generalized singular series and singular integrals satisfy some symmetry properties. If $(I_1,I_2,\bftau)\in \calI$ and $(I_1',I_2',\bftau')$ is the dual index tuple given by $I_1'=I_2$ and $I_2'=I_1$ and $\bftau'=\bftau$, then one has 
\begin{equation*}
\calJ_{(I_1',I_2',\bftau')}(0)= (-1)^{|\bftau|}\calJ_{(I_1,I_2,\bftau)}(0),
\end{equation*}
and in the case where $P$ is irrational (or $\tau_i>0$ for all $i\in I_1\cup I_2$) one has
\begin{equation*}
\grS_{(I_1',I_2',\bftau')}(P,0)= (-1)^{|\bftau|}\grS_{(I_1,I_2,\bftau)}(P,0).
\end{equation*}
Hence the corresponding terms in the expansions in Theorem \ref{thm2} give exactly the same contribution for $(I_1,I_2,\bftau)$ and $(I_2,I_1,\bftau)$.\par
It is interesting to view Theorem \ref{thm2} in the light of Manin's conjecture on the number of rational points of bounded anticanonical height on Fano varieties. Note that the lower order terms are of course affected by restricting the counting function to some Zariski-open subset of the original variety. On the other hand, Theorem \ref{thm2} provides a refined asymptotic formula for the number of all rational points of bounded naive or anticanonical height in the case of smooth complete intersections of sufficiently large dimension. In this aspect one should also mention a conjecture of Sir P. Swinnerton-Dyer \cite{SWD}, that in the asymptotic expansion in Manin's conjecture for smooth cubic surfaces one expects a square-root error term with no other lower order terms. It would be interesting to study this question for cubic forms in sufficiently many variables.\par

\medskip
The structure of this paper is as follows. After introducing some notation in the next section, we formulate a multi-dimensional version of Euler-MacLaurin's summation formula in section 3 which is an immediate consequence of the one-dimensional version. After a treatment of the minor arcs in the following section, we give a refined major arc analysis in section 5 based on the use of Euler-MacLaurin's summation formula. 
In section 6 and section 7 respectively, we show that the singular integrals and singular series which we introduced, are absolutely convergent. Together with the previous sections we then deduce the two main Theorems \ref{thm1} and \ref{thm2} in section 8. Section 9 and section 10 contain some finer analysis of the singular integrals and singular series, including some interpretations to all of these objects. Finally, we compare in the last section our main theorem \ref{thm1} to the work of Vaughan and Wooley \cite{VauWooA14} on Waring's problem.

\medskip

\textbf{Acknowledgements.} The author would like to thank Prof. T. D. Browning for comments on an earlier version of this paper.

\section{Notation and preliminaries}
As usual, we write $\Vert \alp\Vert = \min_{a\in \Z}|a-\alp|$ for the minimal distance from a real number $\alp$ to the next integer. For $x\in \R$ we let $\lfloor x\rfloor$ be the greatest integer which is not larger than $x$, and set $\{x\}= x-\lfloor x\rfloor$. If $\bfa$ and $\bfb$ are real-valued $s$-dimensional vectors than we write $\bfa\leq \bfb$ (or $\bfa<\bfb$) if $a_i\leq b_i$ (or $a_i<b_i$) for all $1\leq i\leq s$.\par
If $I=\{i_1,\ldots, i_l\}$ is a finite index set, then we write $\d \bfx_I$ for $\d x_{i_1}\d x_{i_2}\ldots \d x_{i_l}$ and $|I|$ for the cardinality of $I$.\par
We will often need mixed partial derivatives of functions in several variables. For a multi-index $\bfkap= (\kap_1,\ldots, \kap_s)$ of nonnegative integers we hence introduce the notation
\begin{equation*}
\partial^{\bfkap}_\bfx = \frac{\partial^{\kap_1}}{\partial x_1^{\kap_1}}\ldots\frac{\partial^{\kap_s}}{\partial x_s^{\kap_s}},
\end{equation*}
for this differential operator. Furthermore, we write $|\bfkap|= \sum_{i=1}^s \kap_i$ for the weight of the multi-index $\bfkap$.\par
In Vinogradov's notation all implicit constants may depend in $\bfa,\bfb$ and $F$, and as usual we write $e(x)$ for $e^{2\pi ix}$.\par

\begin{lemma}\label{lemP1}
Let $F(\bfx)$ be a homogeneous polynomial of degree $d$ and $\bfkap\in \Z^s$ a tuple of nonnegative integers such that $\kap_j\geq 1 $ for at least one index $1\leq j\leq s$. Then one has
\begin{equation*}
\partial_\bfx^{\bfkap} e(\gam F(\bfx))= \sum_{a=1}^{|\bfkap|} \gam^a h_a^{(\bfkap)} (\bfx) e(\gam F(\bfx)),
\end{equation*}
where $h_a^{(\bfkap)}$ are homogeneous polynomials in $\bfx$, which are identically zero or of degree $ad-|\bfkap|$.
\end{lemma}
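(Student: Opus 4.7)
The plan is to argue by induction on the weight $|\bfkap|$ of the multi-index.

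For the base case $|\bfkap|=1$, we have $\bfkap = \bfe_j$ for some $1\leq j\leq s$, and the chain rule gives
\begin{equation*}
\partial_{x_j} e(\gam F(\bfx)) = 2\pi i\,\gam \,\frac{\partial F}{\partial x_j}(\bfx)\, e(\gam F(\bfx)),
\end{equation*}
so we may take $h_1^{(\bfe_j)}(\bfx) = 2\pi i\,\partial F/\partial x_j$, which is either identically zero or homogeneous of degree $d-1 = 1\cdot d - |\bfe_j|$, as required.

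For the inductive step, suppose the lemma holds for a multi-index $\bfkap$ with $|\bfkap|\geq 1$, and consider $\bfkap + \bfe_j$ for some $j$. Applying $\partial_{x_j}$ to the induction hypothesis and using the product and chain rules yields
\begin{equation*}
\partial_\bfx^{\bfkap+\bfe_j} e(\gam F(\bfx)) = \sum_{a=1}^{|\bfkap|} \gam^a \Bigl[ (\partial_{x_j} h_a^{(\bfkap)})(\bfx) + 2\pi i\,\gam\, h_a^{(\bfkap)}(\bfx)\,\tfrac{\partial F}{\partial x_j}(\bfx)\Bigr] e(\gam F(\bfx)).
\end{equation*}
Reindexing the second piece (which contributes at powers $a+1$) and collecting terms, we obtain an expression of the form $\sum_{a=1}^{|\bfkap|+1} \gam^a h_a^{(\bfkap+\bfe_j)}(\bfx) e(\gam F(\bfx))$ with
\begin{equation*}
h_a^{(\bfkap+\bfe_j)}(\bfx) = \partial_{x_j} h_a^{(\bfkap)}(\bfx) + 2\pi i\, h_{a-1}^{(\bfkap)}(\bfx)\,\tfrac{\partial F}{\partial x_j}(\bfx),
\end{equation*}
under the convention $h_0^{(\bfkap)} = h_{|\bfkap|+1}^{(\bfkap)} = 0$.

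Each summand on the right is either identically zero or homogeneous: $\partial_{x_j} h_a^{(\bfkap)}$ has degree $(ad - |\bfkap|) - 1 = ad - |\bfkap+\bfe_j|$, and $h_{a-1}^{(\bfkap)}\cdot \partial_{x_j} F$ has degree $((a-1)d - |\bfkap|) + (d-1) = ad - |\bfkap+\bfe_j|$. Thus $h_a^{(\bfkap+\bfe_j)}$ is either identically zero or homogeneous of degree $ad - |\bfkap+\bfe_j|$, completing the induction. There is no genuine obstacle here; the only small point to watch is the bookkeeping of the degree when $ad - |\bfkap| < 0$, in which case homogeneity forces the corresponding $h_a^{(\bfkap)}$ to be the zero polynomial, which is precisely the reason the statement allows for the ``identically zero'' alternative.
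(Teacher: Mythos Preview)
Your proof is correct and follows essentially the same approach as the paper: induction on $|\bfkap|$, with the base case handled by a single application of the chain rule and the inductive step obtained by applying $\partial_{x_j}$ to the inductive hypothesis, splitting via the product rule, reindexing the term carrying the extra factor of $\gam$, and checking the degrees of the two resulting homogeneous pieces. Your explicit formula for $h_a^{(\bfkap+\bfe_j)}$ and the remark about the ``identically zero'' alternative are minor expository additions, but the argument is the same.
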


\begin{proof}
We prove the lemma by induction on $|\bfkap|$. First assume that $\kap_j=1$ for one $1\leq j\leq s$ and $\kap_i=0$ for $i\neq j$. Then we can directly compute
\begin{equation*}
\partial_\bfx^\bfkap e(\gam F(\bfx))= 2\pi i\gam \partial_{x_j}F(\bfx) e(\gam F(\bfx)).
\end{equation*}
This coincides with the assertion of the lemma since $\partial_{x_j}F(\bfx)$ is a homogeneous polynomial of degree $d-1$ or is identically zero.\par
Next suppose we are given the statement of the lemma for some $\bfkap$. Choose one index $1\leq j\leq s$ and set $\kap_j'=\kap_j+1$ and $\kap_i'=\kap_i$ for $i\neq j$. We now aim to prove the lemma for $\bfkap'$. For this we note that
\begin{equation*}
\partial_\bfx^{\bfkap'} e(\gam F(\bfx))= \partial_{x_j}(\partial_\bfx^{\bfkap}F(\bfx)).
\end{equation*}
By assumption this expression equals
\begin{equation*}
\begin{split}
&\partial_{x_j} \left[ \sum_{a=1}^{|\bfkap|} \gam^a h_a^{(\bfkap)} (\bfx) e(\gam F(\bfx))\right] \\ &= \sum_{a=1}^{|\bfkap|} \gam^a \partial_{x_j}(h_a^{(\bfkap)} (\bfx)) e(\gam F(\bfx)) + \sum_{a=1}^{|\bfkap|} \gam^a h_a^{(\bfkap)} (\bfx) \partial_{x_j}e(\gam F(\bfx)) \\ 
& = \sum_{a=1}^{|\bfkap|} \gam^a (\partial_{x_j}h_a^{(\bfkap)} )(\bfx) e(\gam F(\bfx)) + \sum_{a=1}^{|\bfkap|} 2\pi i\gam^{a+1} h_a^{(\bfkap)} (\bfx) (\partial_{x_j} F) (\bfx) e(\gam F(\bfx)).
\end{split}
\end{equation*}
We note that the degree of the polynomial $(\partial_{x_j}h_a^{(\bfkap)} )(\bfx)$ is $\deg h_a^{(\bfkap)}-1 = da-(\sum_{i=1}^s\kap_i)-1 = da - \sum_{i=1}^s \kap_i'$, if it is non-zero. We next consider the second term in the above expression. We rewrite it as 
\begin{equation*}
\sum_{a=2}^{|\bfkap|+1} \gam^{a} h_{a-1}^{(\bfkap)} (\bfx) (\partial_{x_j} F) (\bfx) e(\gam F(\bfx)).
\end{equation*}
For some $2\leq a \leq |\bfkap'|$ we again note that the degree of the homogeneous polynomial
\begin{equation*}
h_{a-1}^{(\bfkap)} (\bfx) (\partial_{x_j} F) (\bfx) 
\end{equation*}
is equal to $d(a-1) -\sum_{i=1}^s \kap_i +d-1 = da-\sum_{i=1}^s \kap_i'$. This completes the proof of the lemma.
\end{proof}

\section{Euler-MacLaurin summation formula}

We recall the definition of Bernoulli polynomials. The sequence of Bernoulli numbers $B_\kap$ for $\kap\geq 0$ can be defined by setting $B_0=1$ and 
\begin{equation*}
B_\kap= -\sum_{j=0}^{\kap-1} \binom{\kap}{j}\frac{B_j}{\kap-j+1},
\end{equation*}
for $\kap\geq 1$. Then the Bernoulli polynomials $B_\kap(x)$ are given for $\kap\geq 0$ by the formula
\begin{equation*}
B_\kap (x)=\sum_{j=0}^\kap \binom{\kap}{j}B_{\kap-j}x^j.
\end{equation*}
In the following we use the periodic Bernoulli polynomials which are defined as $\bet_\kap(x)=B_\kap(\{x\})$ for $\kap\geq 0$.\par

In our major arc analysis we need a higher dimensional version of the Euler-MacLaurin summation formula which we obtain in successively applying the one-dimensional version. For convenience we hence first state the classical version as for example formulated in Lemma 4.1 in \cite{VauWooA14}.

\begin{lemma}\label{EMS1}
Let $a$ and $b$ be real numbers with $a<b$ and $K$ be a positive integer. Suppose that $G(x)$ has continuous derivatives up to $(K-1)$st order on the interval $[a,b]$, that the $K$-th derivative of $G$ exists and is continuous on $(a,b)$, and that $|G^{(K)}(x)|$ is integrable on $[a,b]$. Then one has
\begin{align*}
\sum_{a<x\leq b}G(x)&=\int_a^b G(x)\d x +\sum_{\kap=1}^K \frac{(-1)^\kap}{\kap!}(\bet_\kap(b)G^{(\kap-1)}(b) -\bet_\kap(a)G^{(\kap-1)}(a))\\ &-\frac{(-1)^K}{K!}\int_{a}^b\bet_K(x)G^{(K)}(x)\d x.
\end{align*}
\end{lemma}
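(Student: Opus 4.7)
The plan is to prove Lemma \ref{EMS1} by induction on $K$, exploiting two structural facts about the periodic Bernoulli polynomials: first, $\bet_\kap$ is continuous on $\R$ for every $\kap\ge 2$, whereas $\bet_1$ has a jump of size $-1$ at each integer (since $\bet_1(n^-)=1/2$ while $\bet_1(n^+)=-1/2$); and second, $\bet_\kap'(x)=\kap\bet_{\kap-1}(x)$ at every non-integer $x\in\R$, since this is the standard derivative identity $B_\kap'(x)=\kap B_{\kap-1}(x)$ transported across the periodicity.

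For the base case $K=1$, I would partition $[a,b]$ at the integers lying in $(a,b)$ and perform ordinary integration by parts on each smooth piece of $\bet_1$. Collecting the terms, the contributions from the external endpoints give $\bet_1(b)G(b)-\bet_1(a)G(a)$, the interior derivative contributes $-\int_a^b G(x)\d x$ (since $\bet_1'(x)=1$ almost everywhere), and each interior jump at an integer $n\in(a,b]$ contributes exactly $+G(n)$ because $\bet_1(n^-)-\bet_1(n^+)=1$. Rearranging yields the identity at level $K=1$.

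For the inductive step, assuming the formula for $K-1$ I would apply integration by parts to the remainder
\[
-\frac{(-1)^{K-1}}{(K-1)!}\int_a^b \bet_{K-1}(x)G^{(K-1)}(x)\d x
\]
by writing $\bet_{K-1}(x)=\tfrac{1}{K}\bet_K'(x)$. Since $\bet_K$ is continuous when $K\ge 2$, there are no jump corrections, and the resulting boundary contribution
\[
\frac{(-1)^K}{K!}\bigl(\bet_K(b)G^{(K-1)}(b)-\bet_K(a)G^{(K-1)}(a)\bigr)
\]
is precisely the new summand in the main series, while the new error term is $-\frac{(-1)^K}{K!}\int_a^b \bet_K(x)G^{(K)}(x)\d x$, matching the stated remainder at level $K$. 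The integrability hypothesis on $G^{(K)}$ ensures the boundary terms and the new integral are all well defined.

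The main obstacle is the bookkeeping in the base case: the jump behaviour of $\bet_1$ at integers is exactly the mechanism that converts $\int_a^b \bet_1G'\d x$ into the discrete sum on the left-hand side, and getting the signs right requires explicit use of the one-sided limits $\bet_1(n^\pm)$ together with the convention $\bet_\kap(a)=B_\kap(\{a\})$ at the endpoint (which may itself be integer). Once the $K=1$ case is secure, the inductive step is routine since all remaining integrations by parts involve continuous $\bet_K$.
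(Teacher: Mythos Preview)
The paper does not prove this lemma at all: it simply states it as the classical one-dimensional Euler--MacLaurin formula and cites Lemma~4.1 of \cite{VauWooA14} as a reference. Your inductive argument is the standard textbook proof and is correct; the base case via the jump of $\bet_1$ at integers and the inductive step via $\bet_{K-1}=\tfrac{1}{K}\bet_K'$ with $\bet_K$ continuous for $K\ge 2$ are exactly the right mechanisms, and your signs check out. So there is nothing to compare beyond noting that you have supplied a full proof where the paper only gives a citation.
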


In the work of Vaughan and Wooley \cite{VauWooA14} it is sufficient to use this version of Euler-MacLaurin's summation formula since the diagonal structure of the form underlying Waring's problem ensures that the exponential sum on the major arcs factorizes into one-dimensional sums. We next state a version of Euler-MacLaurin's summation formula which applies to higher dimensional functions, which we obtain from applying the one-dimensional version in each coordinate direction. Since we are only going to apply the Lemma to rather easy and well-behaved functions we do not aim for the greatest generality in the assumptions under which this higher dimensional version of Euler-MacLaurin's summation formula holds. 

\begin{lemma}\label{EMS2}
Assume that $s\geq 1$ and $a_i<b_i$ are real numbers for $1\leq i\leq s$ and let $K_i$ be positive integers for $1\leq i\leq s$. Assume that $g(\bfx)$ has continuous mixed partial derivatives of total order up to $\sum_{i=1}^s K_i$ on the cube $\prod_{i=1}^s [a_i,b_i]$. Then
\begin{align*}
&\sum_{\bfa<\bfx\leq \bfb} g(\bfx) = \sum_{\cup_{i=1}^4 I_i=\{1,\ldots, s\}} \left(\prod_{i\in I_4} \frac{(-1)^{K_i+1}}{K_i!}\right)\int_{\prod_{i\in I_3\cup I_4}[a_i,b_i]}\left( \prod_{i\in I_4} \bet_{K_i}(x_i)\right) \\ &\quad\quad\quad\quad\quad
\left[ \prod_{i\in I_1} \left( \sum_{\kap_i=1}^{K_i} \frac{(-1)^{\kap_i}}{\kap_i!} \bet_{\kap_i}(b_i) \left( \frac{\partial}{\partial x_i}\right)^{(\kap_i-1)}\right)
\right.  \\ &\left.\prod_{i\in I_2} \left(\sum_{\kap_i=1}^{K_i} \frac{(-1)^{\kap_i+1}}{\kap_i!} \bet_{\kap_i}(a_i) \left(\frac{\partial}{\partial x_i}\right)^{(\kap_i-1)}\right)  \prod_{i\in I_4} \left(\frac{\partial}{\partial x_i}\right)^{K_i} g (\bfx)
\right]_{\bfx=\sig_{\bfa,\bfb}(\bfx)}\d \bfx_{I_3} \d \bfx_{I_4}
\end{align*}
The summation over $\cup_{i=1}^4 I_i$ is over all possible partitions of $\{1,\ldots, s\}$ into four disjoint index sets $I_i$, $1\leq i\leq 4$. 
\end{lemma}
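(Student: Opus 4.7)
The plan is to prove the formula by iterating the one-dimensional Euler-MacLaurin formula (Lemma \ref{EMS1}) one coordinate at a time, and to verify that the resulting $4^s$ terms reorganize naturally into a sum over ordered partitions $\{1,\dots,s\}=\bigcup_{i=1}^4 I_i$.

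First I would set up the induction on $s$. The base case $s=1$ is a direct rewriting of Lemma \ref{EMS1}: the integral $\int_a^b G(x)\d x$ corresponds to the partition with $I_3=\{1\}$, the boundary term at $b_1$ with derivative order $\kap_1-1$ contributes to $I_1=\{1\}$, the boundary term at $a_1$ contributes to $I_2=\{1\}$, and the remainder integral $-\frac{(-1)^{K_1}}{K_1!}\int_{a_1}^{b_1}\bet_{K_1}(x)G^{(K_1)}(x)\d x$ matches $I_4=\{1\}$ (with the sign $(-1)^{K_1+1}/K_1!$ as written). For the inductive step, I would fix $x_2,\dots,x_s$ with $a_j<x_j\le b_j$ (integer) and apply Lemma \ref{EMS1} to the function $x_1\mapsto g(x_1,x_2,\dots,x_s)$ with $K=K_1$. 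This expresses the inner sum $\sum_{a_1<x_1\le b_1}g(\bfx)$ as the four-term decomposition above. Summing the remaining variables $x_2,\dots,x_s$ and invoking the inductive hypothesis (applicable coordinate by coordinate, since after an integration or derivative in the $x_1$ variable the mixed regularity hypothesis is preserved) then produces the claimed identity.

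Equivalently, and more transparently, I would proceed without explicit induction by applying Lemma \ref{EMS1} successively in the variables $x_1,x_2,\dots,x_s$. Each application of the one-dimensional formula in coordinate $i$ attaches to that coordinate one of four labels, corresponding to which of the four terms on the right-hand side of Lemma \ref{EMS1} we retain: $i\in I_3$ for the plain integral, $i\in I_1$ for the boundary at $b_i$ (producing the factor $\sum_{\kap_i=1}^{K_i}\frac{(-1)^{\kap_i}}{\kap_i!}\bet_{\kap_i}(b_i)(\partial/\partial x_i)^{\kap_i-1}$), $i\in I_2$ for the boundary at $a_i$ (with sign $(-1)^{\kap_i+1}$ since $-\bet_{\kap_i}(a_i)$ appears), and $i\in I_4$ for the integral remainder (with factor $\frac{(-1)^{K_i+1}}{K_i!}\bet_{K_i}(x_i)(\partial/\partial x_i)^{K_i}$). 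Expanding the product of four choices per coordinate gives the partition sum in the statement, with the ``evaluate at $\sig_{\bfa,\bfb}(\bfx)$'' notation recording that variables in $I_1$ are set to $b_i$, those in $I_2$ to $a_i$, and those in $I_3\cup I_4$ are integrated.

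The main technical issue is to justify the interchanges of differentiation, summation, and integration needed to swap the order of applying the one-dimensional formula among the $s$ coordinates, and to move the boundary-evaluation operators through the integrals over $\bfx_{I_3\cup I_4}$. Both are straightforward under the stated hypothesis that $g$ admits continuous mixed partial derivatives of total order at most $\sum_{i=1}^s K_i$ on the compact box $\prod_{i=1}^s[a_i,b_i]$: every derivative appearing in the formula has order at most $K_i-1$ or $K_i$ in each variable, every integrand is continuous (hence bounded) on a compact set, and Fubini's theorem together with differentiation under the integral (via uniform continuity) handles all the interchanges. Once this bookkeeping is done, matching signs and Bernoulli factors against those in Lemma \ref{EMS1} completes the proof.
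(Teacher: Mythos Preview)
Your proposal is correct and follows exactly the approach the paper itself indicates: the paper does not give a detailed proof of this lemma but simply notes that it is obtained by applying the one-dimensional Euler--MacLaurin formula (Lemma~\ref{EMS1}) successively in each coordinate direction. Your identification of the four labels with the four terms of Lemma~\ref{EMS1}, the sign bookkeeping, and the justification of the interchanges via the regularity hypothesis on $g$ constitute precisely the routine verification the paper leaves to the reader.
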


\section{Minor arc estimates}
In this section we assume that $F(\bfx)$ is a polynomial in $\bfx$ of degree $d$, not necessarily homogeneous. We define the exponential sum
$$ S(\alp)= \sum_{\bfx\in P\calB} e(\alp F(\bfx)).$$
By orthogonality we can express the counting function $R_\calB(P,n)$ as
\begin{equation*}
R_\calB(P,n)= \int_0^1 S(\alp)e(-\alp n)\d\alp.
\end{equation*}
In order to apply the circle method to this counting function we need to dissect the unit interval $[0,1]$ into major and minor arcs. This is done in a traditional way following for example the work of Birch \cite{Bir62}.

Let $0<\eta<1/2$ be some small parameter to be chosen later. For coprime integers $r,q$ we define the major arc
\begin{equation*}
\grM_{r,q}'(\eta) = \{ \alp\in [0,1): |q\alp -r|\leq q P^{-d+\eta}\},
\end{equation*}
and the major arcs $\grM'(\eta)$ as the union
\begin{equation}\label{eqn4.2}
\grM'(\eta)= \bigcup_{1\leq q\leq P^\eta} \bigcup_{\substack{1\leq r\leq q\\ (r,q)=1}} \grM_{r,q}'(\eta).
\end{equation}
Similarly, we define slightly smaller major arcs by 
\begin{equation*}
\grM_{r,q}(\eta) = \{ \alp\in [0,1): |q\alp -r|\leq  P^{-d+\eta}\},
\end{equation*}
for coprime integers $r,q$ and 
\begin{equation*}
\grM(\eta)= \bigcup_{1\leq q\leq P^\eta} \bigcup_{\substack{1\leq r\leq q\\ (r,q)=1}} \grM_{r,q}(\eta).
\end{equation*}
If the parameter $\eta$ is clear from the context we sometimes use the shorter notation $\grM'_{r,q}$ for $\grM'_{r,q}(\eta)$ and for the major arcs $\grM_{r,q}$ similarly.\par
Furthermore, we define the minor arcs as the complement of the smaller version of the major arcs, i.e. $\grm(\eta)=[0,1)\setminus \grM(\eta)$.\par
We recall Lemma 4.1 from Birch's paper \cite{Bir62} which asserts that the major arcs $\grM(\eta)$ are disjoint in case that $\eta$ is sufficiently small. For convenience we state it here for the slightly larger major arcs $\grM'(\eta)$ since we need it for them in the later analysis of major arcs.

\begin{lemma}\label{MAdisj}
Assume that $3\eta <d$. Then the union defining the major arcs $\grM'(\eta)$ as in (\ref{eqn4.2}) is disjoint for $P$ sufficiently large.
\end{lemma}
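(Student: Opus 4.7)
The plan is to argue by contradiction in the standard Farey-style way. Suppose that for some large $P$ the union defining $\grM'(\eta)$ fails to be disjoint, so there exist two distinct coprime pairs $(r,q)\ne(r',q')$ with $1\le q,q'\le P^\eta$ and some $\alpha\in\grM'_{r,q}(\eta)\cap\grM'_{r',q'}(\eta)$. By the definition of the major arcs we get
\begin{equation*}
\left|\alpha-\frac{r}{q}\right|\le P^{-d+\eta},\qquad \left|\alpha-\frac{r'}{q'}\right|\le P^{-d+\eta},
\end{equation*}
and the triangle inequality gives $\left|\frac{r}{q}-\frac{r'}{q'}\right|\le 2P^{-d+\eta}$.

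On the other hand I would bound this quantity from below. Since $(r,q)=(r',q')=1$ and the pairs are distinct in the sense of indexing the major arcs, the fractions $r/q$ and $r'/q'$ cannot coincide (they represent the same rational only if the pairs agree), so $rq'-r'q$ is a nonzero integer. Hence
\begin{equation*}
\left|\frac{r}{q}-\frac{r'}{q'}\right|=\frac{|rq'-r'q|}{qq'}\ge\frac{1}{qq'}\ge P^{-2\eta},
\end{equation*}
using $q,q'\le P^\eta$.

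Combining the two estimates yields $P^{-2\eta}\le 2P^{-d+\eta}$, i.e.\ $P^{d-3\eta}\le 2$. Because the hypothesis $3\eta<d$ makes the exponent $d-3\eta$ strictly positive, this inequality fails for all $P$ sufficiently large, producing the desired contradiction. There is essentially no obstacle here; the only mildly subtle point is making sure that distinct admissible pairs $(r,q)$ really give distinct rationals, which is immediate from coprimality (and from the normalization $1\le r\le q$, which rules off the ambiguity $0/1$ vs.\ $q/q$). Hence for all $P$ exceeding some threshold depending only on $d$ and $\eta$, the arcs $\grM'_{r,q}(\eta)$ are pairwise disjoint, which is the claim.
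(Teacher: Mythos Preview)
Your proof is correct and follows essentially the same approach as the paper: both argue by contradiction, bounding $|r/q - r'/q'|$ above via the triangle inequality and below via $|rq'-r'q|\ge 1$, arriving at the same inequality $P^{d-3\eta}\le 2$. The only cosmetic difference is that the paper works with $|rq'-r'q|$ directly rather than dividing through by $qq'$ first.
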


\begin{proof}
Assume that we are given some $\alp\in \grM'_{r,q}(\eta)\cap \grM'_{r',q'}(\eta)$ for coprime pairs of integers $r,q$ and $r',q'$ with $q,q'\leq P^{\eta}$ such that $r/q\neq r'/q'$. We can estimate
\begin{equation*}
1\leq |r'q-rq'|\leq q'|q\alp-r|+q|q'\alp-r'|\leq 2P^{2\eta}P^{-d+\eta}=2P^{-d+3\eta}.
\end{equation*}
This is a contradiction if we take $P$ sufficiently large depending on $d-3\eta$.
\end{proof}

It is convenient to introduce more generally for any function $\ome: \R^s\rightarrow \R$ of compact support the exponential sum
$$S_\ome (\alp,P)= \sum_{\bfx\in \Z^s} \ome \left(\frac{\bfx}{P}\right) e(\alp F(\bfx)).$$
Hence if we take $\ome$ to be the indicator function of the box $\calB$, then we recover $S(\alp)$. For a certain class of weight functions $\ome$ we need a form of Weyl's inequality for $S(\alp)$. For this we use a slight modification of recent work of Browning and Prendiville \cite{BroPreA14}.\par
We first recall some conventions from \cite{BroPreA14}. We say that a pair $\alp\in \R/\Z$ and $q\in \N$ is primitive, if there is some $r\in \Z$ with $(r,q)=1$ and $\Vert q\alp\Vert = |q\alp -r|$. For some positive constants $c$, $C$ and a positive integer $m$ we introduce the class of smooth weight functions $\calS (c,C,m)$ as the set of smooth compactly supported functions $\ome : \R^s\rightarrow [0,\infty)$ such that $\supp (\ome)\subset [-c,c]^s$ and $\Vert\partial_\bfx^\bfkap \ome (\bfx)\Vert_\infty \leq C$ for all multi-indices $\bfkap \in (\N\cup \{0\})^s$ with $|\bfkap|\leq m$.\par
If $F(\bfx)$ is a polynomial in $\bfx$, then we write $F^{[d]}(\bfx)$ for its homogeneous part of degree $d$. Furthermore, we write $\Sing(F^{[d]})$ for the singular locus of the affine variety given by $F^{[d]}(\bfx)=0$, which is the zero locus of the system of equations
\begin{equation*}
\frac{\partial F^{[d]}}{\partial x_i}(\bfx)=0,\quad 1\leq i\leq s.
\end{equation*}

\begin{lemma}\label{Weyl}
Assume that $\alp$ and $q$ are primitive. Let $\ome \in \calS(c,C,m)$ and $\chi$ the indicator function of some box in $\R^s$, which is contained in $[-c,c]^s$. Assume that $m\geq s$. Then one has
\begin{equation*}
\left| \frac{S_{\ome\chi} (\alp,P)}{P^s}\right|^{2^{d-1}} \ll_{c,C,m} (\log P)^s \left( P^{1-d}+\Vert q\alp\Vert +q P^{-d} +\min\left\{q^{-1},\frac{1}{\Vert q\alp \Vert P^d}\right\}\right)^{\frac{s-\sig}{d-1}},
\end{equation*}
where $\sig = \dim \Sing (F^{[d]})$ is the dimension of the singular locus of the affine variety given by $F^{[d]}(\bfx)=0$.
\end{lemma}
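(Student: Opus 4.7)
The plan is to follow the proof of the smoothly weighted Weyl-type inequality of Browning and Prendiville \cite{BroPreA14} with a modification to incorporate the sharp cutoff $\chi$. The Weyl differencing step and the subsequent geometry-of-numbers argument are algebraic/geometric and insensitive to the presence of $\chi$; only the estimate for the innermost exponential sum, carrying a linear phase, needs to be adapted, and the smoothness hypothesis $m\geq s$ on $\omega$ provides exactly what is needed.

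First I would apply Weyl differencing (via repeated Cauchy-Schwarz) to $S_{\omega\chi}(\alp,P)$ a total of $d-1$ times, obtaining
\begin{equation*}
|S_{\omega\chi}(\alp,P)|^{2^{d-1}} \leq P^{(2^{d-1}-1)s} \sum_{\bfh_1,\ldots,\bfh_{d-1}} |W(\alp;\bfh)|,
\end{equation*}
where $W(\alp;\bfh)$ is a sum over $\bfx\in\Z^s$ of a product of translates of $(\omega\chi)(\cdot/P)$ against the exponential $e(\theta(\alp,\bfh)\cdot\bfx)$. The coefficient vector $\theta(\alp,\bfh)$ is $\alp$ times the multilinear form $\Psi_{F^{[d]}}(\bfh_1,\ldots,\bfh_{d-1},\cdot)$ associated with the top-degree part $F^{[d]}$, up to lower-order perturbations coming from the non-top-degree parts of $F$ which do not affect the subsequent analysis.

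Next I would bound $W(\alp;\bfh)$ by partial summation in each of the $s$ coordinate directions. The weight inside $W(\alp;\bfh)$ is a product of translates of $\omega\chi$: it is $C^{m}$ smooth in the interior of its support and has only axis-aligned boundary discontinuities coming from $\chi$. Iterated one-dimensional partial summation, supported by the hypothesis $m\geq s$, yields
\begin{equation*}
|W(\alp;\bfh)| \ll \prod_{i=1}^s \min\{P, 1/\|\theta_i(\alp,\bfh)\|\}.
\end{equation*}
Summing this bound over $\bfh_1,\ldots,\bfh_{d-1}$ via the standard Davenport-type lemma produces the $(\log P)^s$ factor and reduces the estimate to counting those $(d-1)$-tuples $\bfh$ for which all $s$ components $\theta_i(\alp,\bfh)$ lie within a prescribed distance of $\Z$.

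The count of such $\bfh$-tuples is controlled by the geometry-of-numbers argument of Birch, refined in Browning-Prendiville to exploit the dimension $\sig$ of $\Sing(F^{[d]})$, yielding the exponent $(s-\sig)/(d-1)$. Finally, the primitivity of the pair $(\alp,q)$ converts the Diophantine counting bound into the explicit form $P^{1-d}+\|q\alp\|+qP^{-d}+\min\{q^{-1},1/(\|q\alp\|P^d)\}$. The main obstacle I anticipate is the second step: one must verify that the axis-aligned boundary discontinuities of $\omega\chi$ contribute only lower-order terms absorbable by the $(\log P)^s$ loss from the Davenport-type summation, and that the partial-summation bound (requiring $m\geq s$) is robust enough to accommodate the sharp cutoff while preserving the shape of the right-hand side.
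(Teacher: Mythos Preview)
Your proposal is correct and follows essentially the same route as the paper: both reduce to the Browning--Prendiville argument (Weyl differencing, partial summation to obtain the $\min\{P,\|\theta_i\|^{-1}\}$ bound, then the Birch/Browning--Prendiville geometry-of-numbers count), with the only issue being how to accommodate the sharp cutoff $\chi$. The paper's presentation is slightly cleaner on this point: rather than describing the differenced weight as ``smooth with axis-aligned boundary discontinuities'' and running partial summation through those discontinuities, it observes that the product of translates of $\chi$ is again the indicator of a box (intersections of axis-parallel boxes are boxes) and simply absorbs this box indicator into the linear-phase factor $\phi$, leaving the smooth part of the weight untouched and the Browning--Prendiville machinery applicable verbatim. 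Your partial-summation treatment amounts to the same thing once one notes that restricting a one-variable Abel summation to an interval costs only $O(1)$ boundary terms, so the two descriptions are equivalent in substance.
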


This is a consequence of Lemma 3.3 in \cite{BroPreA14}. Note that we do not need an explicit dependence on the bound for $S_\ome (\alp,P)$ depending on the coefficients of $F(\bfx)$ which can be found in the formulation of Lemma 3.3 in \cite{BroPreA14}. 

\begin{proof}
The proof is identical to the proof of Lemma 3.3 in \cite{BroPreA14}, with the function $\phi(\bfx)= e(\alp F_{\bfh_1,\ldots, \bfh_{d-1}}(\bfx))$ in the notation of \cite{BroPreA14} replaced by the product $\phi(\bfx)= \chi_{(\bfh_1,\ldots, \bfh_{d-1})/P}(\bfx/P)e(\alp F_{\bfh_1,\ldots, \bfh_{d-1}}(\bfx))$. For this we note that $\chi_{(\bfh_1,\ldots, \bfh_{d-1})/P}(\bfx/P)$ is again the indicator function of a box, since the intersection of two boxes in $\R^s$ is again a box.
\end{proof}

As a first application of Weyl's Lemma \ref{Weyl} we provide an upper bound for the minor arc contribution to $R_\calB (P,n)$. In the following we assume that $F(\bfx)$ is homogeneous and set $\sig = \dim \Sing (F)$ as in Lemma \ref{Weyl}.

\begin{lemma}\label{minarcs}
Let $0<\tet_0<(1/2) d$ and assume that $s-\sig > (d-1)2^d$. Then one has 
\begin{equation*}
\int_{\alp\notin \grM(\tet_0)}|S(\alp)|\d\alp = O_\del\left(P^{s-d+\del-\tet_0\left(2^{-d+1}\frac{s-\sig}{d-1}-2\right)}\right),
\end{equation*}
for any $\del >0$.
\end{lemma}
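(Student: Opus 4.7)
The plan is to apply Weyl's inequality (Lemma~\ref{Weyl}) pointwise on the minor arcs and then integrate. First, for every $\alpha\in[0,1)$, Dirichlet's theorem furnishes a coprime pair $(r,q)$ with $1\le q\le Q_0:=P^{d-\tet_0}$ and $\|q\alpha\|\le 1/Q_0 = P^{-d+\tet_0}$. The defining condition $\alpha\notin\grM(\tet_0)$ then forces $q>P^{\tet_0}$, for otherwise $(r,q)$ would place $\alpha$ into some $\grM_{r,q}(\tet_0)$.

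Substituting this $(r,q)$ into Lemma~\ref{Weyl} and using the assumption $\tet_0<d/2$, each of the four terms $P^{1-d}$, $\|q\alpha\|$, $qP^{-d}$, $\min(q^{-1},1/(\|q\alpha\|P^d))$ is at most of order $P^{-\tet_0}$; for $q$ in the range $[P^{\tet_0},P^{d/2}]$ the dominant term is $q^{-1}$, while for $q\in[P^{d/2},Q_0]$ it is $qP^{-d}$. This gives the pointwise estimate
$$\sup_{\alpha\in\grm(\tet_0)}|S(\alpha)| \ll P^{s+\eps}\,P^{-\tet_0(s-\sigma)/((d-1)2^{d-1})}.$$
Next I would dyadically decompose the minor arcs according to the Dirichlet denominator: for a dyadic scale $Q\in[P^{\tet_0},Q_0]$ let $\grN_Q$ be the set of $\alpha\in\grm(\tet_0)$ whose smallest Dirichlet denominator lies in $[Q,2Q)$. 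A routine counting of the intervals $B(r/q,1/(qQ_0))$ gives $|\grN_Q|\ll Q/Q_0 = QP^{-d+\tet_0}$, and Lemma~\ref{Weyl} refines on $\grN_Q$ to $|S(\alpha)|\ll P^{s+\eps}\max(Q^{-1},Q/P^d)^{(s-\sigma)/((d-1)2^{d-1})}$.

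Bounding $\int_{\grN_Q}|S|\,d\alpha\le|\grN_Q|\cdot\sup_{\grN_Q}|S|$ and summing over the dyadic scales, the resulting geometric series is controlled by the Birch hypothesis $s-\sigma>(d-1)2^d$, which makes the Weyl exponent $(s-\sigma)/((d-1)2^{d-1})$ strictly larger than $2$. In the low range $Q\in[P^{\tet_0},P^{d/2}]$ the summand equals $4P^{s-d+\tet_0}Q^{1-(s-\sigma)/((d-1)2^{d-1})}$, which is decreasing in $Q$ and so dominated by its value at $Q=P^{\tet_0}$, producing exactly $P^{s-d+2\tet_0-\tet_0(s-\sigma)/((d-1)2^{d-1})}$, matching the claim up to a harmless $P^\del$.

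The main obstacle is the high-denominator regime $Q\in[P^{d/2},Q_0]$, where the naive pair of bounds $|\grN_Q|\le\min(4Q/Q_0,1)$ and $\sup_{\grN_Q}|S|\ll P^s(Q/P^d)^{(s-\sigma)/((d-1)2^{d-1})}$ only yields the larger quantity $P^{s-\tet_0(s-\sigma)/((d-1)2^{d-1})}$, falling short of the target by a factor $P^{d-2\tet_0}$. To close this gap one must exploit more structure on the high-$Q$ pieces: either the symmetry $q\leftrightarrow P^d/q$ in the Weyl expression (which matches the low-$Q$ and high-$Q$ contributions), or Cauchy--Schwarz together with the second-moment bound $\int_0^1|S(\alpha)|^2\,d\alpha\ll P^{2s-d+\eps}$, splitting off and treating the large-$q$ part of the dyadic decomposition separately. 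Either way, the overall budget is dictated by the low-$Q$ endpoint $Q=P^{\tet_0}$, which is precisely the transition between the major and minor arcs and furnishes the factor $P^{2\tet_0-d}$ visible in the claimed exponent.
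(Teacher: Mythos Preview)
Your proposal contains a genuine gap that you yourself identify but do not close. In the high-denominator regime $Q\in[P^{d/2},P^{d-\tet_0}]$, the pairing of your measure bound $|\grN_Q|\ll QP^{\tet_0-d}$ with the Weyl bound $|S(\alp)|\ll P^{s+\eps}(QP^{-d})^{\lam}$ (writing $\lam=2^{1-d}(s-\sig)/(d-1)$) produces a summand \emph{increasing} in $Q$, peaking at $Q=P^{d-\tet_0}$ with the value $P^{s-\tet_0\lam+\eps}$, which exceeds the target by $P^{d-2\tet_0}$. Neither of your proposed remedies works as stated: the ``symmetry $q\leftrightarrow P^d/q$'' is only a symmetry of the Weyl factor, not of the measure $|\grN_Q|$, so it does not transfer the low-$Q$ bound to the high-$Q$ range; and the second-moment bound $\int_0^1|S(\alp)|^2\d\alp\ll P^{2s-d+\eps}$ is neither stated nor proved in this paper for a general form $F$, and even granting it, Cauchy--Schwarz yields only $P^{s-d/2+\eps}$ on the high-$Q$ part, which for large $\lam$ is again worse than the claim.

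The paper's proof avoids this difficulty by a different decomposition. Rather than fixing a single Dirichlet level $Q_0=P^{d-\tet_0}$ and slicing by the denominator $q$, it chooses an increasing sequence $\tet_0<\tet_1<\cdots<\tet_T=d/2$ with $\tet_{t+1}-\tet_t\le\del$ and writes
\[
\grm(\tet_0)=\bigcup_{t=0}^{T-1}\bigl(\grM(\tet_{t+1})\setminus\grM(\tet_t)\bigr),
\]
which exhausts the minor arcs because Dirichlet's theorem gives $\grM(d/2)=[0,1)$. On each annulus one has $\alp\in\grm(\tet_t)$, so applying Dirichlet with the \emph{varying} parameter $Q=P^{\tet_t}$ and then Lemma~\ref{Weyl} gives $|S(\alp)|\ll P^{s+\del}P^{-\tet_t\lam}$; combining with the measure bound $\meas(\grM(\tet_{t+1}))\ll P^{2\tet_{t+1}-d}\ll P^{2\tet_t-d+2\del}$ yields a contribution $\ll P^{s-d+3\del-\tet_t(\lam-2)}$. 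Since $\lam>2$, the worst annulus is $t=0$, and summing $O_\del(1)$ terms gives the lemma. The point is that matching the Dirichlet level to the annulus keeps the measure and the Weyl bound balanced throughout; fixing $Q_0$ once and then slicing in $q$ creates exactly the mismatch you encountered.
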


\begin{proof}
Let $Q\geq 1$. Dirichlet's approximation lemma states that for any real $\alp\in\R$ there is some $1\leq q\leq Q$ with $\Vert \alp q\Vert \leq Q^{-1}$. We claim that one might additionally assume that the pair $(\alp,q)$ is primitve. Otherwise, there is some $r\in\Z$ such that $|\alp q-r|=\Vert \alp q\Vert$ with $\gcd (r,q)>1$. Then there is some divisor $q'|q$ such that $\alp,q'$ is primitive and $\Vert \alp q'\Vert <\Vert \alp q\Vert \leq Q^{-1}$.\par
Assume now that $\alp \in \grm (\tet)$ for some $0<\tet\leq (1/2)d$. Then there is some $q\leq Q$ such that $\alp,q$ is primitive and $\Vert \alp q\Vert \leq Q^{-1}$. Furthermore one has $q>P^\tet$ or $\Vert \alp q\Vert > P^{-d+\tet}$, since otherwise $\alp$ would be contained in the major arcs $\grM(\tet)$. We now apply the Weyl bound in Lemma \ref{Weyl} to the exponential sum $S(\alp)$, where we set $\chi$ the characteristic function of the box $\calB$ and $\ome$ a smooth function such that $\ome \equiv 1 $ on the box $\calB$. Then Lemma \ref{Weyl} delivers the bound
\begin{equation*}
\left| \frac{S(\alp)}{P^s}\right|^{2^{d-1}}\ll (\log P)^s \left( P^{1-d} + \frac{1}{Q} + QP^{-d} + \min \left\{ q^{-1},\frac{1}{\Vert q\alp\Vert P^d}\right\} \right)^{\frac{s-\sig}{d-1}}.
\end{equation*}
Note that $\min \left\{ q^{-1}, \frac{1}{\Vert q\alp\Vert P^d}\right\} \leq P^{-\tet}$ and set $Q=P^\tet$. Then we obtain
\begin{equation*}
\left| \frac{S(\alp)}{P^s}\right|^{2^{d-1}}\ll (\log P)^s \left(P^{1-d}+P^{-\tet}+P^{\tet-d}\right)^{\frac{s-\sig}{d-1}}.
\end{equation*}
Note that our restriction $0<\tet\leq (1/2)d$ implies that the second term in that bound dominates the expression, i.e.
\begin{equation}\label{star1}
\left| \frac{S(\alp)}{P^s}\right|^{2^{d-1}}\ll (\log P)^s (P^{-\tet})^{\frac{s-\sig}{d-1}}.
\end{equation}
Now we define a sequence 
$$ \tet_T>\tet_{T-1}>\ldots > \tet_1>\tet_0>0,$$
such that $\tet_T= (1/2)d$ and $|\tet_{t+1}-\tet_t|\leq \del$ for all $1\leq t<T$. We can do this with at most $T\ll P^\del$ points. Note that by Dirichlet's approximation theorem we have $\grM(\tet_T)=[0,1)$. We now estimate the contribution of 
\begin{equation}\label{star2}
\int_{\alp \in \grM(\tet_{t+1})\setminus \grM(\tet_t)}|S(\alp)|\d\alp,
\end{equation}
for all $0\leq t <T$. By the Weyl bound (\ref{star1}) we obtain
\begin{equation*}
\int_{\alp \in \grM(\tet_{t+1})\setminus \grM(\tet_t)}|S(\alp)|\d\alp\ll \meas \left(\grM(\tet_{t+1})\right) P^{s+\del} P^{-2^{-d+1}\tet_t\frac{s-\sig}{d-1}}.
\end{equation*}
Note that the major arcs $\grM(\tet_{t+1})$ might not be disjoint, but we can still bound their measure above by
\begin{equation*}
\meas \left(\grM(\tet_{t+1})\right) \ll \sum_{q\leq P^{\tet_{t+1}}}\sum_{r=1}^q q^{-1}P^{-d+\tet_{t+1}} \ll P^{2\tet_{t+1}-d}.
\end{equation*}
Hence we may bound the contribution of (\ref{star2}) by
\begin{equation*}
\ll P^{2\tet_{t+1}-d+s+\del-\tet_t 2^{-d+1}\frac{s-\sig}{d-1}}\ll P^{s-d+3\del-\tet_t (2^{-d+1}\frac{s-\sig}{d-1}-2)}.
\end{equation*}
Hence we can bound the complete minor arc contribution by
\begin{equation*}
\int_{\alp\notin \grM(\tet_0)}|S(\alp)|\d\alp \ll \sum_{t=0}^{T-1} \int_{\alp \in \grM(\tet_{t+1})\setminus \grM(\tet_t)}|S(\alp)|\d\alp\ll P^{s-d+4\del-\tet_0 (2^{-d+1}\frac{s-\sig}{d-1}-2)},
\end{equation*}
which completes the proof of the lemma.
\end{proof}

\section{Major arc analysis}

The main goal of this section is to replace the usual major arc approximation as for example in Lemma 5.1 in Birch's work \cite{Bir62} by a much finer approximation using the higher dimensional version of Euler-MacLaurin's summation formula in Lemma \ref{EMS2}. For this recall the notation of the singular series and singular integrals as in the introduction as well as the integrals $J_{(I_1,I_2,\bftau)}(\gam)$ and the exponential sums $S_{(I_1,I_2,\bftau)}(P;r,q)$. In addition, we define the function $f(\gam,\bfx)$ by
\begin{equation*}
f(\gam,\bfx)= e(\gam F(\bfx)),
\end{equation*}
and write $f^{(\bfkap)}(\gam,\bfx):= \partial_\bfx^{\bfkap} f(\gam,\bfx)$.\par

We are now in a position to state our first major arc approximation to the exponential sum $S(\alp)$.

\begin{lemma}\label{MA1}
Assume that $\alp\in \grM'_{r,q}$ for some $q\leq P^\eta$, and write $\alp= \frac{r}{q} +\gam$ with some $|\gam|\leq P^{-d+\eta}$. Let $K\geq 1$ be an integer. Then we have
\begin{equation*}
\begin{split}
&S(\alp)e(-\alp n) \\ &= \sum_{(I_1,I_2,\bftau)\in \calI (K)} q^{-|I_3|+|\bftau|} S_{(I_1,I_2,\bftau)}(P;r,q) e\left(-\frac{r}{q}n\right) P^{|I_3|-|\bftau|}J_{(I_1,I_2,\bftau)}(P^d\gam)e(-\gam n) \\ &+O(P^{s-K+2K\eta}).
\end{split}
\end{equation*}
\end{lemma}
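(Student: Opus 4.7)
The plan is to follow the classical major-arc analysis, but to replace the crude estimation of the inner sum by an exact application of the multi-dimensional Euler--MacLaurin formula (Lemma \ref{EMS2}). With $\alp = r/q + \gam$, substitute $\bfx = q\bfy + \bfz$ with $0 \le \bfz < q$. Since $F$ is homogeneous with integer coefficients, $F(q\bfy + \bfz) \equiv F(\bfz) \pmod{q}$, so
\[
S(\alp) = \sum_{0 \le \bfz < q} e\!\left(\tfrac{r}{q}F(\bfz)\right) \sum_{\bfy \in R(\bfz)} g_\bfz(\bfy), \qquad g_\bfz(\bfy) := e(\gam F(q\bfy + \bfz)),
\]
where $R(\bfz) = \prod_{i=1}^{s}((a_iP-z_i)/q,\,(b_iP-z_i)/q]$.

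I would then apply Lemma \ref{EMS2} to the inner sum with $K_i = K$ for every $i$. This expands it as a sum over partitions $\bigcup_{i=1}^4 I_i = \{1,\ldots,s\}$ and, for each $i \in I_1 \cup I_2$, over $\kap_i \in \{1,\ldots,K\}$. Writing $\tau_i := \kap_i - 1$ for $i \in I_1 \cup I_2$ and $\tau_i := 0$ otherwise, the Bernoulli-polynomial prefactors $(-1)^{\kap_i}\beta_{\kap_i}((b_iP-z_i)/q)/\kap_i!$ and their $I_2$-analogues match verbatim those in the definition of $S_{(I_1,I_2,\bftau)}(P;r,q)$. The \emph{main terms} of the lemma correspond to $I_4 = \emptyset$ and $(I_1,I_2,\bftau) \in \calI(K)$; all other terms go into the error.

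For a main term, the chain rule yields $\partial_\bfy^{\bftau} g_\bfz(\bfy) = q^{|\bftau|}\,\partial_\bfx^{\bftau} e(\gam F(\bfx))\big|_{\bfx = q\bfy + \bfz}$. Inverting the substitution inside the $\bfy_{I_3}$-integration introduces a Jacobian $q^{-|I_3|}$; a subsequent rescaling $\bfx \mapsto P\bfx'$ converts $\gam$ to $P^d\gam$, yields a factor $P^{|I_3|-|\bftau|}$, and maps the integration region to $\prod_{i\in I_3}(a_i,b_i]$. The resulting inner integral is exactly $q^{-|I_3|+|\bftau|}P^{|I_3|-|\bftau|}J_{(I_1,I_2,\bftau)}(P^d\gam)$. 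Combining with the outer sum over $\bfz$, the phase $e(\tfrac{r}{q}F(\bfz))$, the Bernoulli factors, and the global $e(-\alp n) = e(-\tfrac{r}{q}n)e(-\gam n)$ reproduces the asserted main term.

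For an error term, either $I_4 \ne \emptyset$ or $|I_1|+|I_2|+|\bftau| \ge K$, and in both cases the total order $m$ of the $\bfy$-derivative appearing in the EMS integrand satisfies $|I_1|+|I_2|+m \ge K$. Lemma \ref{lemP1} writes $\partial_\bfx^{\bfkap}e(\gam F(\bfx)) = \sum_{a=1}^{|\bfkap|}\gam^a h_a^{(\bfkap)}(\bfx)e(\gam F(\bfx))$ with $h_a^{(\bfkap)}$ homogeneous of degree $ad - |\bfkap|$, so on $R(\bfz)$ the bounds $|\bfx| \ll P$ and $|\gam| \le P^{-d+\eta}$ give $|\partial_\bfy^\bfkap g_\bfz| \ll q^m P^{m(\eta-1)}$ (with $\ll 1$ if $m=0$). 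Multiplied by the volume estimate $(P/q)^{|I_3|+|I_4|}$ and the $q^s$ from summing over $\bfz$, each error term is at most $P^{s-(|I_1|+|I_2|+m)+O(\eta)} \le P^{s-K+O(\eta)}$; since there are only $O_{K,s}(1)$ such terms, choosing $\eta$ small enough absorbs everything into $O(P^{s-K+2K\eta})$. The principal obstacle is the bookkeeping: tracking the Bernoulli prefactors, the $q^{|\bftau|}$ from the chain rule, the $q^{-|I_3|}$ from the $\bfy \to \bfx$ change of variables, and the $P^{|I_3|-|\bftau|}$ from the rescaling $\bfx \to \bfx/P$, so that they assemble precisely into the expression stated in the lemma.
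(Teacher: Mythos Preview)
Your proposal is correct and follows essentially the same approach as the paper: split $\bfx=q\bfy+\bfz$, apply the multi-dimensional Euler--MacLaurin formula (Lemma~\ref{EMS2}) with all $K_i=K$, separate the terms with $I_4=\emptyset$ and $|I_1|+|I_2|+|\bftau|<K$ as main terms, bound the remaining terms via Lemma~\ref{lemP1}, and recover $J_{(I_1,I_2,\bftau)}(P^d\gam)$ by the change of variables $\bfy\to\bfx\to\bfx/P$ together with the homogeneity scaling $f^{(\bftau)}(\gam,P\bfx)=P^{-|\bftau|}f^{(\bftau)}(P^d\gam,\bfx)$.

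One small imprecision: in the error estimate you write ``choosing $\eta$ small enough absorbs everything into $O(P^{s-K+2K\eta})$'', but $\eta$ is \emph{fixed} by the major-arc hypothesis and cannot be chosen. What actually makes the bound work is that your explicit exponent $s-(|I_1|+|I_2|+m)+\eta(|I_1|+|I_2|+2m)$ satisfies
\[
(1-\eta)(|I_1|+|I_2|)+(1-2\eta)m \;\ge\; (1-2\eta)\bigl(|I_1|+|I_2|+m\bigr)\;\ge\;(1-2\eta)K
\]
for any $\eta<1/2$, which gives the stated $O(P^{s-K+2K\eta})$ uniformly. This is exactly the inequality the paper uses.
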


Note that the term for $I_1=I_2=\emptyset $ and $\bftau=0$ corresponds to the usual approximation on the major arcs as for example in Lemma 5.1 in Birch's work \cite{Bir62}. All the other terms will contribute to lower order terms.

\begin{proof}
We start in rewriting the exponential sum $S(\alp)$ as
\begin{equation}\label{eqnMAlem1}
S(\alp)= \sum_{0\leq \bfz <q} e\left(\frac{r}{q} F(\bfz)\right) \sum_{\bfz+q\bfy \in P\calB} e(\gam F(\bfz+q\bfy)),
\end{equation}
and consider the inner sum for a fixed vector $\bfz$. Let $\atil_i,\btil_i$ for $1\leq i\leq s$ be defined by
\begin{equation*}
\prod_{i=1}^s (\atil_i,\btil_i]= \prod_{i=1}^s \left( \frac{Pa_i-z_i}{q},\frac{Pb_i-z_i}{q}\right].
\end{equation*}
Let $g(\bfy)= f(\gam,\bfz+q\bfy)$ and note that 
\begin{equation}\label{eqnMAlem1b}
\partial_\bfy^{\bfkap}g(\bfy)= q^{|\bfkap|} f^{(\bfkap)} (\gam,\bfz+q\bfy),
\end{equation}
for every multi-index $\bfkap\in \Z_{\geq 0}^s$.\par
Now choose some fixed $K\in \N$ and let $\calItil (K)$ be the set of tuples $(I_1,I_2,I_4,\bftau)$ with the following properties. For $i=1,2,4$ one has $I_i\subset \{1,\ldots, s\}$ and the index sets $I_i$ are pairwise disjoint. Furthermore $\bftau\in \Z_{\geq 0}^s$ satisfies $\tau_i=0$ if $i\notin I_1\cup I_2\cup I_4$, and $0\leq \tau_i\leq K-1$ for $i\in I_1\cup I_2$ and $\tau_i=K$ for $i\in I_4$. Similarly as before we set $I_3 = \{1,\ldots, s\}\setminus (I_1\cup I_2\cup I_4)$ for such a tuple in $\calItil (K)$. Now we apply Lemma \ref{EMS2} to the sum
\begin{equation}\label{eqnMAlem2}
\Sig(\bfz)=\sum_{\bfz+q\bfy\in P\calB} e(\gam F(\bfz+q\bfy)),
\end{equation}
with the parameters $K_i=K$ for all $1\leq i\leq s$ and to the box $\prod_{i=1}^s (\atil_i,\btil_i]$. We obtain
\begin{equation}\label{eqnMAlem3}
\Sig(\bfz)= \sum_{(I_1,I_2,I_4,\bftau)\in \calItil (K)}  \Sig(\bfz;I_1,I_2,I_4,\bftau),
\end{equation}
with
\begin{equation*}
\begin{split}
&\Sig(\bfz;I_1,I_2,I_4,\bftau)= \left(\prod_{i\in I_4} \frac{(-1)^{K+1}}{K!}\right) \left(\prod_{i\in I_1} \frac{(-1)^{\tau_i+1}}{(\tau_i+1)!} \bet_{\tau_i+1}(\btil_i)\right)\\ & \left(\prod_{i\in I_2} \frac{(-1)^{\tau_i}}{(\tau_i+1)!} \bet_{\tau_i+1}(\atil_i)\right) 
\int_{\prod_{i\in I_3\cup I_4}[\atil_i,\btil_i]} \left(\prod_{i\in I_4} \bet_K(x_i)\right) \partial_\bfx^{\bftau}g(\bfx)|_{\bfx= \sig_{\bfatil,\bfbtil}(\bfx)} \d \bfx_{I_3} \d \bfx_{I_4}.
\end{split}
\end{equation*}
First we estimate the contribution of $\Sig (\bfz;I_1,I_2,I_4,\bftau)$ in the case $|I_1|+|I_2|+|\bftau|\geq K$. We apply Lemma \ref{lemP1} and obtain
\begin{equation}\label{eqnMAlem3b}
f^{(\bftau)} (\gam,\bfx)= \sum_{l=1}^{|\bftau|}\gam^l h_l^{(\bftau)}(\bfx) e(\gam F(\bfx)),
\end{equation}
with $h_l^{(\bftau)}(\bfx)$ some homogeneous polynomials in $\bfx$ which are either zero or of degree $ld-|\bftau|$. Hence we have
\begin{equation*}
\partial_\bfy^{(\bftau)} g(\bfy)= q^{|\bftau|} \sum_{l=1}^{|\bftau|} \gam^l h_l^{(\bftau)} (\bfz+q\bfy) e(\gam F(\bfz+q\bfy)).
\end{equation*}
For a point $\bfy$ lying in the box $\bfy\in \prod_{i=1}^s[\atil_i,\btil_i]$ we can now estimate
\begin{equation*}
|\partial_\bfy^{(\bftau)}g(\bfy)| \ll q^{|\bftau|} \sum_{l=1}^{|\bftau|} |\gam|^l P^{\deg (h_l^{(\bftau)})}\ll q^{|\bftau|} \sum_{l=1}^{|\bftau|} |\gam|^l P^{ld-|\bftau|}.
\end{equation*}
Note that all the periodic Bernoulli polynomials $\bet_\tau(x)$ are bounded. Hence we can now estimate $\Sig(\bfz;I_1,I_2,I_4,\bftau)$ by
\begin{equation*}
\Sig (\bfz;I_1,I_2,I_4;\bftau)\ll \int_{\prod_{i\in I_3\cup I_4} [\atil_i,\btil_i]} q^{|\bftau|} \sum_{l=1}^{|\bftau|} |\gam|^l P^{ld-|\bftau|} \d \bfx_{I_3} \d \bfx_{I_4}.
\end{equation*}
Since the volume of $\prod_{i\in I_3 \cup I_4}[\atil_i,\btil_i]$ is bounded by $\left(\frac{P}{q}\right)^{|I_3|+|I_4|}$ we obtain the upper bound
\begin{equation*}
\begin{split}
\Sig (\bfz;I_1,I_2,I_4;\bftau)&\ll \left(\frac{P}{q}\right)^{|I_3|+|I_4|}q^{|\bftau|} \sum_{l=1}^{|\bftau|} |\gam|^l P^{ld-|\bftau|} \\
&\ll P^{|I_3|+|I_4|} q^{|\bftau|-|I_3|-|I_4|} \sum_{l=1}^{|\bftau|} P^{l(-d+\eta)}P^{ld-|\bftau|}\\ & \ll P^{|I_3|+|I_4|} q^{|\bftau|-|I_3|-|I_4|} P^{|\bftau|\eta-|\bftau|}
\end{split}
\end{equation*}
We estimate this further as
\begin{equation}\label{eqnMAlem4}
\begin{split}
\Sig (\bfz;I_1,I_2,I_4;\bftau)&\ll q^{-s} P^{s-|I_1|-|I_2|-|\bftau|+|\bftau|\eta}q^{|\bftau|+|I_1|+|I_2|} \\ & \ll q^{-s} P^{s-|I_1|-|I_2|-|\bftau|}P^{2\eta |\bftau| +\eta (|I_1|+|I_2|)}\\ &\ll q^{-s} P^{s-(1-2\eta)(|I_1|+|I_2|+|\bftau|)} \\ &\ll q^{-s} P^{s-K+2 \eta K},
\end{split}
\end{equation}
using $\eta <1/2$.\par
We combine this information with equations (\ref{eqnMAlem1}), (\ref{eqnMAlem2}), (\ref{eqnMAlem3}), (\ref{eqnMAlem4}), and see that
\begin{equation}\label{eqnMAlem5}
S(\alp)= \sum_{(I_1,I_2,\bftau)\in \calI(K)}\sum_{0\leq \bfz<q} e\left(\frac{r}{q}F(\bfz)\right) \Sig(\bfz;I_1,I_2,\emptyset,\bftau)+O(P^{s-K+2\eta K}).
\end{equation}
Note for this that $|\bftau|\geq K$ as soon as $I_4\neq \emptyset$.\par
Next we consider for a fixed tuple $(I_1,I_2,\bftau)\in \calI(K)$ the integral
\begin{equation*}
J'= \int_{\prod_{i\in I_3}[\atil_i,\btil_i]} \partial_\bfy^{\bftau}g(\bfy)|_{\bfy=\sig_{\bfatil,\bfbtil}}(\bfy) \d \bfy_{I_3}.
\end{equation*}
We recall the relation (\ref{eqnMAlem1b}) and perform the variable substitution $x_i=qy_i+z_i$ for $i\in I_3$. This leads us to
\begin{equation*}
\begin{split}
J'&= q^{-|I_3|} \int_{\prod_{i\in I_3}[Pa_i,Pb_i]}q^{|\bftau|} f^{(\bftau)}(\gam,\bfz+q\bfy)|_{\bfy= \sig_{\bfatil,\bfbtil}(\frac{\bfx-\bfz}{q})}\d \bfx_{I_3} \\ 
&= q^{-|I_3|+|\bftau|} \int_{\prod_{i\in I_3}[Pa_i,Pb_i]} f^{(\bftau)}(\gam,\bfx)|_{\bfy= \sig_{P\bfa,P\bfb}(\bfx)}\d \bfx_{I_3}\\
&= q^{-|I_3|+|\bftau|} \int_{\prod_{i\in I_3}[Pa_i,Pb_i]} f^{(\bftau)}(\gam,\sig_{P\bfa,P\bfb}(\bfx))\d \bfx_{I_3}.
\end{split}
\end{equation*}
Note that $J'$ is now independent of $\bfz$. We further rewrite $J'$ via substituting $Px_i'=x_i$ for $i\in I_3$, and obtain
\begin{equation*}
J'= q^{-|I_3|+|\bftau|}P^{|I_3|} \int_{\prod_{i\in I_3}[a_i,b_i]} f^{(\bftau)}(\gam,\sig_{P\bfa,P\bfb}(P\bfx))\d \bfx_{I_3}.
\end{equation*}
We recall equation (\ref{eqnMAlem3b}) and observe that
\begin{equation*}
\begin{split}
f^{(\bftau)}(\gam,P\bfx)&=\sum_{l=1}^{|\bftau|}\gam^l h_l^{(\bftau)}(P\bfx) e(\gam F(P\bfx))\\ 
&= \sum_{l=1}^{|\bftau|}\gam^l P^{ld-|\bftau|}h_l^{(\bftau)}(\bfx) e(\gam P^d F(\bfx))\\ 
&= P^{-|\bftau|}\sum_{l=1}^{|\bftau|}(P^d\gam)^l h_l^{(\bftau)}(\bfx) e(P^d\gam  F(\bfx))\\ 
&= P^{-|\bftau|} f^{(\bftau)} (P^d\gam,\bfx).
\end{split}
\end{equation*}
Hence we can again reformulate $J'$ as
\begin{equation*}
\begin{split}
J'&= q^{-|I_3|+|\bftau|}P^{|I_3|-|\bftau|} \int_{\prod_{i\in I_3}[a_i,b_i]} f^{(\bftau)}(P^d\gam,\sig_{\bfa,\bfb}(\bfx))\d \bfx_{I_3}\\ &=q^{-|I_3|+|\bftau|}P^{|I_3|-|\bftau|} J_{(I_1,I_2,\bftau)}(P^d\gam).
\end{split}
\end{equation*}
We conclude that 
\begin{equation*}
\begin{split}
\Sig(\bfz;I_1,I_2,\emptyset,\bftau)= &\left(\prod_{i\in I_1} \frac{(-1)^{\tau_i+1}}{(\tau_i+1)!} \bet_{\tau_i+1}(\btil_i)\right) \left(\prod_{i\in I_2} \frac{(-1)^{\tau_i}}{(\tau_i+1)!} \bet_{\tau_i+1}(\atil_i)\right)J' \\ = &\left(\prod_{i\in I_1} \frac{(-1)^{\tau_i+1}}{(\tau_i+1)!} \bet_{\tau_i+1}(\btil_i)\right) \left(\prod_{i\in I_2} \frac{(-1)^{\tau_i}}{(\tau_i+1)!} \bet_{\tau_i+1}(\atil_i)\right)\\ &
q^{-|I_3|+|\bftau|}P^{|I_3|-|\bftau|} J_{(I_1,I_2,\bftau)}(P^d\gam).
\end{split}
\end{equation*}
The Lemma now follows from inserting this into equation (\ref{eqnMAlem5}).

\end{proof}

We now use this Lemma to evaluate the major arc contribution to the counting function $R(P,n)$. For a measurable subset $\calC \subset [0,1]$ we write
\begin{equation*}
R(P,n;\calC)= \int_\calC S(\alp )e(-\alp n) \d \alp.
\end{equation*}
Hence our next goal is to further analyse $R(P,n;\grM')$. For a tuple $(I_1,I_2,\bftau)\in \calI$, we introduce the truncated singular series
\begin{equation*}
\grS_{(I_1,I_2,\bftau)} (P,n;Q):= \sum_{q\leq Q} \sum_{\substack{r=1\\ (r,q)=1}}^q q^{-|I_3|+|\bftau|}S_{(I_1,I_2,\bftau)}(P;r,q)e\left(-\frac{r}{q}n\right).
\end{equation*}
We write
\begin{equation*}
\grS_{(I_1,I_2,\bftau)} (P,n) = \lim_{Q\rightarrow \infty} \grS_{(I_1,I_2,\bftau)} (P,n;Q),
\end{equation*}
if the limit exists.
Similarly, for any real number $Q\geq 1$ and $(I_1,I_2,\bftau)\in \calI$, we introduce the truncated singular integral
\begin{equation*}
\calJ_{(I_1,I_2,\bftau)}(n;Q):= \int_{|\gam|\leq Q} J_{(I_1,I_2,\bftau)}(\gam)e(-\gam n)\d\gam,
\end{equation*}
and we write
\begin{equation*}
\calJ_{(I_1,I_2,\bftau)}(n) =\lim_{Q\rightarrow \infty} \calJ_{(I_1,I_2,\bftau)}(n;Q),
\end{equation*}
in case the integral converges.\par

\begin{lemma}\label{MA2}
Let $K\geq 1$ be some integer and $\eta < (1/3)d$. Then one has
\begin{equation*}
\begin{split}
R(P,n;\grM')&= \sum_{(I_1,I_2,\bftau)\in \calI(K)} \grS_{(I_1,I_2,\bftau)}(P,n;P^\eta)\calJ_{(I_1,I_2,\bftau)} (P^{-d n};P^\eta) P^{|I_3|-|\bftau|-d} \\ &+O\left( P^{s-K-d+(2K+3)\eta}\right).
\end{split}
\end{equation*}
\end{lemma}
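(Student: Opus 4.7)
The plan is to apply Lemma \ref{MA1} pointwise on each component of $\grM'$ and then integrate, using the disjointness of the major arcs provided by Lemma \ref{MAdisj} (available since our assumption $\eta<(1/3)d$ gives $3\eta<d$). First I would partition
\begin{equation*}
R(P,n;\grM') = \sum_{q \leq P^\eta} \sum_{\substack{1 \leq r \leq q \\ (r,q)=1}} \int_{\grM'_{r,q}} S(\alp) e(-\alp n)\, \d\alp,
\end{equation*}
and on each $\grM'_{r,q}$ parametrise $\alp = r/q + \gam$ with $|\gam| \leq P^{-d+\eta}$. Lemma \ref{MA1} then writes the integrand as
\begin{equation*}
\sum_{(I_1,I_2,\bftau)\in\calI(K)} q^{-|I_3|+|\bftau|} S_{(I_1,I_2,\bftau)}(P;r,q)\, e(-rn/q)\, P^{|I_3|-|\bftau|} J_{(I_1,I_2,\bftau)}(P^d\gam)\, e(-\gam n)
\end{equation*}
plus a pointwise remainder of size $O(P^{s-K+2K\eta})$.

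Next I would integrate the $\gam$-dependent factor of each main term over $\gam\in[-P^{-d+\eta},P^{-d+\eta}]$. The substitution $\beta=P^d\gam$ rescales this to $P^{-d}\calJ_{(I_1,I_2,\bftau)}(P^{-d}n;P^\eta)$, the integration range becoming $|\beta|\le P^\eta$ exactly as in the definition of the truncated singular integral. Summing the remaining $(r,q)$-dependent factors over $1\le q\le P^\eta$ and $r$ coprime to $q$ precisely assembles $\grS_{(I_1,I_2,\bftau)}(P,n;P^\eta)$, and the combined power of $P$ is $P^{|I_3|-|\bftau|-d}$, matching the advertised main term.

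For the error term I need the trivial measure bound $\meas(\grM')\ll P^{-d+3\eta}$, obtained by summing the arc lengths $2P^{-d+\eta}$ over the $\sum_{q\le P^\eta}\phi(q)\ll P^{2\eta}$ pairs $(r,q)$. Integrating the pointwise remainder from Lemma \ref{MA1} against this measure yields
\begin{equation*}
\meas(\grM')\cdot O\!\left(P^{s-K+2K\eta}\right)=O\!\left(P^{s-K-d+(2K+3)\eta}\right),
\end{equation*}
which is exactly the stated error. There is no real obstacle; the work is bookkeeping, namely confirming that the asymmetric weight $q^{-|I_3|+|\bftau|}$ from Lemma \ref{MA1} fuses with the Farey sum to reproduce the definition of $\grS_{(I_1,I_2,\bftau)}(P,n;P^\eta)$, and that the Jacobian $P^{-d}$ from the rescaling combines with $P^{|I_3|-|\bftau|}$ to give the claimed exponent of $P$.
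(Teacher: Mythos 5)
Your proposal is correct and follows essentially the same route as the paper: decompose $R(P,n;\grM')$ into the disjoint arcs, apply Lemma \ref{MA1} on each $\grM'_{r,q}$, rescale via $\beta=P^d\gam$ to assemble the truncated singular integral and series, and integrate the pointwise remainder against $\meas(\grM')\ll P^{-d+3\eta}$ to obtain the stated error. Nothing further is needed.
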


\begin{proof}
By definition of the major arcs we have
\begin{equation*}
R(P,n;\grM')= \sum_{1\leq q\leq P^\eta} \sum_{\substack{r=1\\ (r,q)=1}}^q \int_{\grM'_{r,q}}S(\alp) e(-\alp n)\d \alp.
\end{equation*}
We use Lemma \ref{MA1} to approximate $S(\alp)$ on $\grM_{r,q}'$, and obtain
\begin{equation}\label{eqnMA2lem1}
\begin{split}
R(P,n;\grM')=\sum_{(I_1,I_2,\bftau)\in \calI(K)} &\grS_{(I_1,I_2,\bftau)}(P,n;P^\eta) P^{|I_3|-|\bftau|}\\ &\int_{|\gam|\leq P^{-d+\eta}} J_{(I_1,I_2,\bftau)}(P^d\gam) e(-\gam n)\d\gam + E_1,
\end{split}
\end{equation}
with some error term $E_1$. By Lemma \ref{MA1} we can bound the resulting error $E_1$ by
\begin{equation*}
\begin{split}
E_1&\ll \sum_{1\leq q\leq P^\eta} \sum_{r=1}^q \meas (\grM'_{r,q}) P^{s-K+2K\eta} 
\\&\ll P^{2\eta} P^{-d+\eta} P^{s-K+2K\eta}\ll P^{s-K-d+(2K+3)\eta}.
\end{split}
\end{equation*}
Furthermore we note that the variable substitution $\gam'= P^d\gam$ leads in the integral part to
\begin{equation*}
\begin{split}
\int_{|\gam|\leq P^{-d+\eta}}J_{(I_1,I_2,\bftau)}(P^d\gam) e(-\gam n)\d\gam &= P^{-d} \int_{|\gam|\leq P^\eta} J_{(I_1,I_2,\bftau)}(\gam) e(-\gam P^{-d} n)\d\gam \\
&= P^{-d} \calJ_{(I_1,I_2,\bftau)} (P^{-d}n,P^\eta).
\end{split}
\end{equation*}
Together with equation (\ref{eqnMA2lem1}) and the estimate for the error term $E_1$ this completes the proof of the lemma.
\end{proof}

\section{Singular series}
The first goal of this section is to study convergence properties of the truncated singular series $\grS_{(I_1,I_2,\bftau)}(P,n;Q)$. 

\begin{lemma}\label{singseries}
Let $(I_1,I_2,\bftau)\in \calI (K)$, for some $K\geq 1$. Assume that
\begin{equation*}
2^{-d+1}\frac{s-\sig}{d-1}>K+1.
\end{equation*}
Then  $\grS_{(I_1,I_2,\bftau)}(P,n;Q)$ is absolutely convergent and satisfies
\begin{equation*}
\grS_{(I_1,I_2,\bftau)}(P,n;Q)-\grS_{(I_1,I_2,\bftau)}(P,n)\ll_\bftau Q^{K+1-2^{-d+1}\frac{s-\sig}{d-1}+\eps},
\end{equation*}
for any $\eps >0$, and the implicit constant depends on $\bftau$ but not on $P$.
\end{lemma}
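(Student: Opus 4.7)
The plan is to reduce the statement to a Weyl-type pointwise bound on the complete exponential sum $S_{(I_1,I_2,\bftau)}(P;r,q)$. Specifically, I aim to prove
\begin{equation*}
|S_{(I_1,I_2,\bftau)}(P;r,q)| \ll_\bftau q^{s-2^{-d+1}(s-\sig)/(d-1)+\eps}
\end{equation*}
uniformly over pairs $(r,q)$ with $(r,q)=1$. Granted this, each summand of $\grS_{(I_1,I_2,\bftau)}(P,n;Q)$ is bounded by $q^{|I_1|+|I_2|+|\bftau|-2^{-d+1}(s-\sig)/(d-1)+\eps}$ (using $|I_3|=s-|I_1|-|I_2|$). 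Summing over $1\le r\le q$ coprime to $q$ (contributing a factor of at most $q$) and then over the tail $q>Q$, combined with the bound $|I_1|+|I_2|+|\bftau|\le K-1$ and the hypothesis $2^{-d+1}(s-\sig)/(d-1)>K+1$, produces exactly the stated error term $Q^{K+1-2^{-d+1}(s-\sig)/(d-1)+\eps}$; absolute convergence of $\grS_{(I_1,I_2,\bftau)}(P,n)$ is obtained in the limit $Q\to\infty$.

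To establish the Weyl bound on $S_{(I_1,I_2,\bftau)}(P;r,q)$ itself, I would open each Bernoulli factor in the variable $z_i$ ($i\in I_1\cup I_2$) via its Fourier expansion. For $\tau_i\ge 1$ the series
\begin{equation*}
\bet_{\tau_i+1}(y)=-\frac{(\tau_i+1)!}{(2\pi i)^{\tau_i+1}}\sum_{m\neq 0}\frac{e(my)}{m^{\tau_i+1}}
\end{equation*}
converges absolutely. Substituting and interchanging summations yields a representation
\begin{equation*}
S_{(I_1,I_2,\bftau)}(P;r,q)=\sum_{\bfm}c(\bfm)\,e(\Psi(\bfm;P,\bfa,\bfb))\,T(r,\bfm;q),
\end{equation*}
where $\bfm=(m_i)_{i\in I_1\cup I_2}\in\Z^{|I_1|+|I_2|}$ has all coordinates nonzero, $c(\bfm)\ll\prod_i|m_i|^{-\tau_i-1}$, $\Psi$ is a linear phase independent of $\bfz$, and
\begin{equation*}
T(r,\bfm;q)=\sum_{0\le\bfz<q}e\!\left(\tfrac{rF(\bfz)-\sum_{i\in I_1\cup I_2}m_iz_i}{q}\right)
\end{equation*}
is a complete exponential sum modulo $q$.

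The polynomial $G(\bfz)=rF(\bfz)-\sum m_iz_i$ has degree-$d$ part $rF^{[d]}$, whose singular locus is (for $r\neq 0$) the same as that of $F^{[d]}$, hence of dimension $\sig$. Applying Lemma \ref{Weyl} with $\alpha=1/q$, $P=q$, polynomial $G$, and $\omega\chi$ equal to a smooth version of the indicator of $[0,1]^s$ (noting that $\|q\cdot 1/q\|=0$, so the term $\min\{q^{-1},\cdot\}=q^{-1}$ dominates inside Lemma \ref{Weyl}) then yields
\begin{equation*}
|T(r,\bfm;q)|\ll q^{s-2^{-d+1}(s-\sig)/(d-1)+\eps}
\end{equation*}
uniformly in $\bfm$: crucially, the stated version of Lemma \ref{Weyl} carries no dependence on the lower-degree coefficients of $G$, so the bound is insensitive to $\bfm$ and $r$. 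Summed against $\sum_{\bfm}|c(\bfm)|<\infty$, valid whenever $\tau_i\ge 1$ for all $i\in I_1\cup I_2$, this completes the Weyl-type bound on $S_{(I_1,I_2,\bftau)}(P;r,q)$.

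The main obstacle is the case in which $\tau_i=0$ for some $i\in I_1\cup I_2$: here the Fourier series of $\bet_1$ converges only conditionally and $\sum_{m\neq 0}|m|^{-1}$ diverges. I plan to handle this by truncating the expansion of each such $\bet_1$-factor at $|m|\le q$, which contributes only an extra $\log q$ factor absorbed into $q^\eps$ when combined with the Weyl bound on $T(r,\bfm;q)$, and by treating the truncation error via Abel summation exploiting the bounded total variation of $\bet_1$ on $[0,1)$. The latter converts the tail contribution into incomplete Weyl sums over sub-boxes of $\{0,\ldots,q-1\}^s$, to which Lemma \ref{Weyl} still applies with $\chi$ the indicator of the relevant sub-box and produces the same Weyl exponent. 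The net effect is only an additional $\log^{O(1)}q$ factor, harmless for our purposes, yielding the desired estimate uniformly in $\bftau$.
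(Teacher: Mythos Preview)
Your overall plan---reduce to a Weyl bound $|S_{(I_1,I_2,\bftau)}(P;r,q)|\ll_\bftau q^{s-2^{-d+1}(s-\sig)/(d-1)+\eps}$ and then sum---is exactly right, and coincides with what the paper does. The difference lies in how the Weyl bound itself is obtained.

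The paper avoids Fourier expansion entirely. It simply observes that as a function of $y_i=z_i/q\in[0,1)$, each factor $\bet_{\tau_i+1}((Pc_i-z_i)/q)$ is a \emph{piecewise polynomial} of degree $\tau_i+1$ (with at most one break point). Splitting $[0,1)^s$ into $O_\bftau(1)$ sub-boxes on which the full weight is a polynomial of constant sign, one writes $h=\sum_{l=1}^L \chi_l\,\ome_l$ with $\chi_l$ a box indicator and $\ome_l\in\calS(c,C,s)$ for $c,C$ depending only on $\bftau$ (not on $P$ or $q$). Lemma~\ref{Weyl} then applies directly with the fixed polynomial $F$ and $\alpha=r/q$, giving the bound with no uniformity issues to worry about. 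This handles all $\bftau$, including the $\tau_i=0$ case, in one stroke.

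Your Fourier route is workable but costs more. Two points deserve care. First, your appeal to Lemma~\ref{Weyl} for $T(r,\bfm;q)$ uses the polynomial $G=rF-\sum m_iz_i$, and you assert that ``the stated version of Lemma~\ref{Weyl} carries no dependence on the lower-degree coefficients''. This is true of the \emph{proof} (Weyl differencing annihilates the linear part, and since $(r,q)=1$ the multilinear-form counting at $\alpha=r/q$ is independent of $r$), but by the paper's conventions the implicit constant in Lemma~\ref{Weyl} is allowed to depend on the polynomial; you would need to revisit the Birch/Browning--Prendiville argument to justify the uniformity in $\bfm$ and $r$, rather than cite the lemma as a black box. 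Second, your treatment of the $\tau_i=0$ factors (truncate the $\bet_1$ Fourier series at $|m|\le q$, then control the tail by partial summation over sub-boxes) is precisely where your method collapses back onto the paper's: the ``incomplete Weyl sums over sub-boxes'' you end up bounding are the same $\chi_l\ome_l$-weighted sums the paper handles directly, so you gain nothing from the detour.

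In short, both approaches lead to the same estimate; the paper's direct decomposition is the cleaner of the two and sidesteps the uniformity subtlety you would otherwise have to address.
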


\begin{proof}
The main ingredient of the proof is a suitable upper bound for the exponential sum $S_{(I_1,I_2,\bftau)}(P;r,q)$ which we deduce from Lemma \ref{Weyl}. Recall that 
\begin{equation*}
\grS_{(I_1,I_2,\bftau)}(P,n;Q)= \sum_{q\leq Q}\sum_{\substack{r=1\\ (r,q)=1}}^q q^{-|I_3|+|\bftau|}S_{(I_1,I_2,\bftau)}(P;r,q)e\left(-\frac{r}{q}n\right),
\end{equation*}
with exponential sums of the form
\begin{equation*}
S_{(I_1,I_2,\bftau)}(P;r,q)=\sum_{\bfz\in\Z^s} e\left(\frac{r}{q}F(\bfz)\right) h(\bfz/q),
\end{equation*}
where the weight function $h(\bfz)$ is given by
\begin{equation*}
\begin{split}
h(\bfz)=\id_{[0,1)^s} (\bfz)&\left(\prod_{i\in I_1}\frac{(-1)^{\tau_i+1}}{(\tau_i+1)!}\bet_{\tau_i+1}\left(\frac{Pb_i}{q}-z_i\right)\right) \\ &\left(\prod_{i\in I_2} \frac{(-1)^{\tau_i}}{(\tau_i+1)!} \bet_{\tau_i+1}\left(\frac{Pa_i}{q}-z_i\right)\right).
\end{split}
\end{equation*}
Note that each of the $\bet_{\tau_i+1}\left(\frac{Pa_i}{q}-z_i\right)$ is a polynomial of bounded degree depending only on $\tau_i$. Hence one can divide the interval $[0,1)$ into at most $1+\deg \bet_{\tau_i+1}$ subintervals, on each of which $\bet_{\tau_i+1}$ does not change sign. We do this for each $i\in I_1\cup I_2$ and obtain a finite set of boxes on each of which $h(\bfz)$ has bounded derivatives up total order at least $s$. Hence we can write
\begin{equation*}
h(\bfz)= \sum_{l=1}^L \chi_l(\bfz) \ome_l(\bfz),
\end{equation*}
where $L\ll_{|\bftau|}1$ and $\chi_l$ is the indicator function of a box contained in $[0,1)^s$ and $\ome_l(\bfz)\in \calS (c,C,s)$ for some positive constants $c$ and $C$. Note that both $c$ and $C$ do not depend on $P$, and $c,C\ll_{|\bftau|} 1$. Hence we can rewrite
\begin{equation*}
S_{(I_1,I_2,\bftau)}(P;r,q)= \sum_{l=1}^L \sum_{\bfz\in \Z^s} \chi_l\left(\frac{\bfz}{q}\right)\ome_l \left(\frac{\bfz}{q}\right) e\left(\frac{r}{q}F(\bfz)\right).
\end{equation*}
We now apply Lemma \ref{Weyl} to each of the inner exponential sums. Note that if $(r,q)=1$, then the tuple $r/q$, $q$ is primitive. 
Hence we obtain
\begin{equation*}
\left|q^{-s}\sum_{\bfz\in \Z^s} \chi_l\left(\frac{\bfz}{q}\right)\ome_l\left(\frac{\bfz}{q}\right) e\left(\frac{r}{q}F(\bfz)\right)\right|^{2^{d-1}} \ll_{|\bftau|} (\log q)^s \left( q^{1-d}+ \min\{ q^{-1},\infty\}\right)^{\frac{s-\sig}{d-1}}.
\end{equation*}
This implies that 
\begin{equation*}
\sum_{\bfz\in \Z^s} \chi_l\left(\frac{\bfz}{q}\right)\ome_l \left(\frac{\bfz}{q}\right) e\left(\frac{r}{q}F(\bfz)\right) \ll_{|\bftau|}q^{s+\eps} \left(q^{-1}\right)^{2^{-d+1}\frac{s-\sig}{d-1}},
\end{equation*}
and hence the same bound holds for $S_{(I_1,I_2,\bftau)}(P;r,q)$.\par
We recall that $|I_1|+|I_2|+|I_3|=s$ for any tuple $(I_1,I_2,\bftau)\in \calI$. We bound a truncated version of the singular series $\grS_{(I_1,I_2,\bftau)}(P,n;Q)$ by
\begin{equation*}
\begin{split}
&\sum_{Q_1<q\leq Q_2} \sum_{\substack{r=1\\ (r,q)=1}}^q q^{-|I_3|+|\bftau|} \left| S_{(I_1,I_2,\bftau)}(P;r,q)e\left(-\frac{r}{q}n\right)\right| \\ \ll &\sum_{Q_1<q\leq Q_2} q^{-s+|I_1|+|I_2|+|\bftau|+1}q^{s+\eps} (q^{-1})^{2^{-d+1}\frac{s-\sig}{d-1}} \\ \ll & \sum_{Q_1<q\leq Q_2} q^{|I_1|+|I_2|+|\bftau|+1+\eps}q^{-2^{-d+1}\frac{s-\sig}{d-1}}.
\end{split}
\end{equation*}
Note that $|I_1|+|I_2|+|\bftau|<K$ for $(I_1,I_2,\bftau)\in \calI(K)$, and hence we can bound the last expression by
\begin{equation*}
\ll Q_1^{K+1-2^{-d+1}\frac{s-\sig}{d-1}+\eps},
\end{equation*}
for any $\eps >0$.
\end{proof}

\section{Singular integral}
In this section we study the singular integrals $\calJ_{(I_1,I_2,\bftau)}(n;Q)$ for $(I_1,I_2,\bftau)\in \calI$. Under suitable conditions on $F$ and the box $\calB$ we show that this is absolutely convergent and we give some rate of convergence as $Q\rightarrow \infty$. The analysis is inspired by the classical treatment as in \cite{Bir62}.\par
Assume as before that $F(\bfx)$ is a homogeneous form of degree $d$. Fix a partition of $\{1,\ldots, s\}$ into three index sets $I_i$, $1\leq i\leq 3$ and set $I_4=\emptyset$. Then $F(\sig_{\bfa,\bfb}(\bfx))$ is a polynomial in the variables $x_i$, $i\in I_3$ of degree $d'\leq d$. We assume that $d'=d$, and write as before $F^{[d]}(\sig_{\bfa,\bfb}(\bfx))$ for the homogeneous part of $F$ in $\bfx_{I_3}$ of degree $d$. The affine singular locus $\Sing (F^{[d]}(\sig_{\bfa,\bfb}(\bfx)))$ of $F^{[d]}(\sig_{\bfa,\bfb}(\bfx))$ in affine $|I_3|$-space is given by the system of equations
\begin{equation*}
\frac{\partial }{\partial x_i} F^{[d]}(\sig_{\bfa,\bfb}(\bfx)) = 0, \quad i\in I_3.
\end{equation*}
Let $\rho_{(I_1,I_2)}$ be the dimension of this affine variety, and note that it is independent of $\bfa$ and $\bfb$.

\begin{lemma}\label{integralconv}
Let $Q\geq 1$ and $(I_1,I_2,\bftau)\in \calI$. Assume that $F(\sig_{\bfa,\bfb}(\bfx))$ is of degree $d$ in $\bfx_{I_3}$, and 
\begin{equation}\label{integralconv1}
s-|I_1|-|I_2|-\rho_{(I_1,I_2)} >( |\bftau|+1) (d-1)2^{d-1}.
\end{equation}
Then $\calJ_{(I_1,I_2,\bftau)}(n;Q)$ is absolutely convergent and we have 
\begin{equation*}
|\calJ_{(I_1,I_2,\bftau)}(n;Q)-\calJ_{(I_1,I_2,\bftau)}(n)|\ll Q^{-2^{-d+1}\frac{s-|I_1|-|I_2|-\rho_{(I_1,I_2)}}{d-1}+1+|\bftau|+\eps}.
\end{equation*}
\end{lemma}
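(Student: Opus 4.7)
The plan is to obtain a pointwise bound $|J_{(I_1,I_2,\bftau)}(\gam)| \ll |\gam|^{-\kap}$ with $\kap > |\bftau|+1$ for $|\gam|\geq 1$, and then to integrate the tail $|\gam|>Q$ to deduce both absolute convergence of $\calJ_{(I_1,I_2,\bftau)}(n)$ and the claimed rate.

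First I would expand the integrand via Lemma \ref{lemP1} and the substitution $\bfx \mapsto \sig_{\bfa,\bfb}(\bfx)$, obtaining
\begin{equation*}
J_{(I_1,I_2,\bftau)}(\gam) = \sum_{a=1}^{|\bftau|}\gam^a \int_{\prod_{i\in I_3}[a_i,b_i]} H_a(\bfx_{I_3})\, e(\gam G(\bfx_{I_3}))\,\d\bfx_{I_3},
\end{equation*}
where $G(\bfx_{I_3}) := F(\sig_{\bfa,\bfb}(\bfx))$ has exact degree $d$ in $\bfx_{I_3}$ by hypothesis, and each $H_a$ (the analogue of the homogeneous polynomial $h_a^{(\bftau)}$ from Lemma \ref{lemP1} after freezing the $I_1\cup I_2$-coordinates) is smooth with bounded derivatives on the compact box. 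I would then apply a continuous analogue of Weyl's inequality to each oscillatory integral, obtained by Weyl differencing directly on the $\bfx_{I_3}$-integral exactly as in Lemma 5.4 of Birch \cite{Bir62} (and mirroring the proof of Lemma \ref{Weyl} with the lattice sum replaced by an integral); this yields, for $|\gam|\geq 1$,
\begin{equation*}
\left|\int_{\prod_{i\in I_3}[a_i,b_i]} H_a(\bfx_{I_3})\,e(\gam G(\bfx_{I_3}))\,\d\bfx_{I_3}\right|^{2^{d-1}} \ll |\gam|^{-(|I_3|-\rho_{(I_1,I_2)})/(d-1)+\eps}.
\end{equation*}
Combining with $|\gam|^a \leq |\gam|^{|\bftau|}$ and summing over $a$ gives
\begin{equation*}
|J_{(I_1,I_2,\bftau)}(\gam)| \ll |\gam|^{|\bftau|-2^{-d+1}(|I_3|-\rho_{(I_1,I_2)})/(d-1)+\eps}
\end{equation*}
for $|\gam|\geq 1$. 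Since $|I_3| = s - |I_1| - |I_2|$ (because $I_4=\emptyset$), hypothesis (\ref{integralconv1}) rearranges to $|\bftau|+1 < 2^{-d+1}(|I_3|-\rho_{(I_1,I_2)})/(d-1)$, so the pointwise exponent is strictly below $-1$. Integrating this bound over $|\gam|>Q$ yields the claimed tail estimate and simultaneously establishes absolute convergence of $\calJ_{(I_1,I_2,\bftau)}(n)$, while the range $|\gam|\leq 1$ contributes an $O(1)$ harmless amount by continuity of $J_{(I_1,I_2,\bftau)}$ on the compact box.

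The step I expect to be most delicate is the continuous Weyl bound with exactly the invariant $\rho_{(I_1,I_2)}$ rather than $\dim V^*$. Restricting $F$ to the affine flat obtained by freezing the $I_1\cup I_2$-coordinates can drop the ambient dimension and genuinely alter the singular locus of the resulting degree-$d$ form, and the Weyl differencing must be carried out only in the $|I_3|$-many surviving directions. The hypothesis that $G$ has degree exactly $d$ in $\bfx_{I_3}$ is precisely what guarantees that the differencing produces multilinear forms of the expected rank, so that the standard Birch-style analysis of exponential integrals goes through with $\rho_{(I_1,I_2)}$ playing the role that $\sig$ plays in Lemma \ref{Weyl}.
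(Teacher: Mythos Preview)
Your proposal is correct and reaches the same pointwise bound
\[
|J_{(I_1,I_2,\bftau)}(\gam)| \ll |\gam|^{|\bftau|-2^{-d+1}\frac{|I_3|-\rho_{(I_1,I_2)}}{d-1}+\eps}
\]
that the paper obtains, and from there the tail integration is identical. The route, however, is genuinely different. The paper does \emph{not} perform Weyl differencing on the oscillatory integral directly; instead it introduces an auxiliary scale parameter $P$, rescales so that the integral is over $P\prod_{i\in I_3}[a_i,b_i]$ with phase $P^{-d}\gam\, F$, replaces this integral by a lattice sum over integer points (controlling the discretisation error by $O(P^{-1}|\gam|^{|\bftau|+1})$), and then applies the already-established discrete Weyl bound of Lemma~\ref{Weyl} to each weighted sum $S_l$. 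Finally $P$ is taken to be a large power of $|\gam|$ so that all auxiliary errors are negligible. This is precisely the classical Birch device for bounding the singular integral by recycling the discrete Weyl inequality, so your citation of Birch's Lemma~5.4 is apt for the \emph{conclusion}, but Birch's own argument there is the discretisation trick rather than a continuous differencing. Your direct continuous differencing is conceptually cleaner and saves the detour through an artificial parameter $P$; the paper's approach has the practical advantage that Lemma~\ref{Weyl} (with its careful handling of the product weight $\ome\chi$) can be quoted verbatim, so no separate continuous Weyl machinery needs to be set up. Both approaches rely on the same geometric input, namely that the Birch rank of the multilinear form arising after $d-1$ differencings in the $I_3$-directions is governed by $\rho_{(I_1,I_2)}$, and the hypothesis that $G=F\circ\sig_{\bfa,\bfb}$ has top degree exactly $d$ is what makes this work in either argument.
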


\begin{proof}
We recall that
\begin{equation}\label{eqnintconv1}
\calJ_{(I_1,I_2,\bftau)}(n;Q)= \int_{|\gam|\leq Q}J_{(I_1,I_2,\bftau)}(\gam)e(-\gam n)\d\gam,
\end{equation}
and
\begin{equation*}
J_{(I_1,I_2,\bftau)}(\gam)=\int_{\prod_{i\in I_3}[a_i,b_i]}f^{(\bftau)}(\gam, \sig_{\bfa,\bfb}(\bfx))\d \bfx_{I_3}.
\end{equation*}
It is clear that $|J_{(I_1,I_2,\bftau)}|\ll_{\bfa,\bfb} 1$ for all $\gam$ and hence we assume in the following that $|\gam|\geq 1$. Furthermore we first treat the case where $|\bftau|\geq 1$. We start the proof in rewriting the integral $J_{(I_1,I_2,\bftau)}(\gam)$ in the following way. By Lemma \ref{lemP1} one has
\begin{equation*}
f^{(\bftau)}(\gam,\sig_{\bfa,\bfb}(\bfx))= \sum_{l=1}^{|\bftau|} \gam^l h_l^{(\bftau)}(\sig_{\bfa,\bfb}(\bfx))e(\gam F(\sig_{\bfa,\bfb}(\bfx))),
\end{equation*}
with homogeneous polynomials $h_l^{(\bftau)}$ which are either identically zero of of degree $ld-|\bftau|$. Hence we have
\begin{equation*}
\begin{split}
J_{(I_1,I_2,\bftau)}(\gam)=& \int_{\prod_{i\in I_3}[a_i,b_i]}\sum_{l=1}^{|\bftau|}\gam^l h_l^{(\bftau)}(\sig_{\bfa,\bfb}(\bfx))e(\gam F(\sig_{\bfa,\bfb}(\bfx))) \d \bfx_{I_3}\\ 
=& \int_{\prod_{i\in I_3}[a_i,b_i]} \sum_{l=1}^{|\bftau|}(P^{-d}\gam)^l h_l^{(\bftau)}(\sig_{P\bfa,P\bfb}(P\bfx))P^{|\bftau|}\\ &e(P^{-d}\gam F(\sig_{P\bfa,P\bfb}(P\bfx))) \d \bfx_{I_3}.
\end{split}
\end{equation*}
A change of variables in the integral leads to
\begin{equation*}
\begin{split}
J_{(I_1,I_2,\bftau)}(\gam)=& P^{|\bftau|-|I_3|}\int_{P\prod_{i\in I_3}[a_i,b_i]} \sum_{l=1}^{|\bftau|}(P^{-d}\gam)^l h_l^{(\bftau)}(\sig_{P\bfa,P\bfb}(\bfx))\\ &e(P^{-d}\gam F(\sig_{P\bfa,P\bfb}(\bfx))) \d \bfx_{I_3} \\ 
=& P^{|\bftau|-|I_3|} \int_{P\prod_{i\in I_3}[a_i,b_i]}f^{(\bftau)}(P^{-d}\gam,\sig_{P\bfa,P\bfb}(\bfx))\d\bfx_{I_3}.
\end{split}
\end{equation*}
Next we approximate the last integral by a sum over integer tuples $\bfx\in \Z^{|I_3|}\cap P\prod_{i\in I_3}[a_i,b_i]$. For this we could us a form of Euler-MacLaurin summation formula, but for our purposes a much simpler argument is sufficient here. Note that if $|\bfx-\bfy|\leq 1$, then 
\begin{equation*}\begin{split}
\left|f^{(\bftau)}(P^{-d}\gam, \sig_{P\bfa,P\bfb}(\bfx))-f^{(\bftau)}(P^{-d}\gam, \sig_{P\bfa,P\bfb}\right.&\left.(\bfy))\right| \\&\ll \max_{j\in I_3} f^{(\bftau)+\bfe_j} (P^{-d}\gam, \sig_{P\bfa,P\bfb}(\bfxi)),
\end{split}
\end{equation*}
for $|\bfxi-\bfx|\leq 1$, where $\bfe_j$ is the $j$th unit vector. 
If $|\bfx|\ll P$, then the decomposition for $f^{(\bftau)+\bfe_j} (P^{-d}\gam, \sig_{P\bfa,P\bfb}(\bfxi))$ as in Lemma \ref{lemP1} implies that 
\begin{equation*}
\begin{split}
\left|f^{(\bftau)}\right.-&\left.(P^{-d}\gam, \sig_{P\bfa,P\bfb}(\bfx)) f^{(\bftau)}(P^{-d}\gam, \sig_{P\bfa,P\bfb} (\bfy))\right| \\ &\ll  \sum_{1\leq l\leq |\bftau|+1} |P^{-d}\gam|^l P^{ld-|\bftau|-1} \ll \sum_{1\leq l\leq |\bftau|+1} |\gam|^l P^{-|\bftau|-1} \ll |\gam|^{|\bftau|+1} P^{-|\bftau|-1}.
\end{split}
\end{equation*}
Hence we can rewrite $J_{(I_1,I_2,\bftau)}(\gam)$ as 
\begin{equation*}
J_{(I_1,I_2,\bftau)}(\gam)= P^{|\bftau|-|I_3|} \sum_{\bfx_{I_3}\in P\prod_{i\in I_3}[a_i,b_i]}f^{(\bftau)}(P^{-d}\gam,\sig_{P\bfa,P\bfb}(\bfx)) + E_1 +E_2,
\end{equation*}
with an error term $E_1$ from the boundary of the box
\begin{equation*}
E_1\ll  P^{|\bftau|-|I_3|} P^{|I_3|-1} \sup_{\bfx\in P\calB} f^{(\bftau)} (P^{-d}\gam, \sig_{P\bfa,P\bfb}(\bfx)),
\end{equation*}
and an error term $E_2$ from approximating the sum by the integral in the interior of the box
\begin{equation*}
E_2\ll P^{|\bftau|-|I_3|} P^{|I_3|} |\gam|^{|\bftau|+1} P^{-|\bftau|-1}.
\end{equation*}
Using again the decomposition of $f^{(\bftau)}$ from Lemma \ref{lemP1} we can bound the first error term by
\begin{equation*}
E_1\ll P^{|\bftau|-1} \max_{1\leq l\leq |\bftau|}P^{-ld}|\gam|^l P^{ld-|\bftau|}\ll P^{-1}|\gam|^{|\bftau|}.
\end{equation*}
Recalling that we have assumed $|\gam|\geq 1$, we obtain
\begin{equation*}
J_{(I_1,I_2,\bftau)}(\gam)= P^{|\bftau|-|I_3|} \sum_{\bfx_{I_3}\in P\prod_{i\in I_3}[a_i,b_i]}f^{(\bftau)}(P^{-d}\gam,\sig_{P\bfa,P\bfb}(\bfx)) + O\left(P^{-1}|\gam|^{|\bftau|+1}\right).
\end{equation*}
Again, we use the decomposition of $f^{(\bftau)}(\gam,\bfx)$ as in Lemma \ref{lemP1} to decompose the main term as
\begin{equation}\label{eqnsingint3}
J_{(I_1,I_2,\bftau)}(\gam)= \sum_{l=1}^{|\bftau|} S_l + O\left(P^{-1}|\gam|^{|\bftau|+1}\right),
\end{equation}
with sums $S_l$ of the form
\begin{equation*}
S_l= P^{|\bftau|-|I_3|} \sum_{\bfx_{I_3}\in P\prod_{i\in I_3}[a_i,b_i]} (\gam P^{-d})^l h_l^{(\bftau)}(\sig_{P\bfa,P\bfb}(\bfx)) e\left(\gam P^{-d} F(\sig_{P\bfa,P\bfb}(\bfx))\right).
\end{equation*}
Using the homogeneity of the polynomials $h_l^{(\bftau)}(\bfx)$, we rewrite $S_l$ as
\begin{equation*}
S_l= P^{-|I_3|} \sum_{\bfx_{I_3}\in P\prod_{i\in I_3}[a_i,b_i]} \gam^l h_l^{(\bftau)}(\sig_{\bfa,\bfb}(\bfx/P)) e\left(\gam P^{-d} F(\sig_{P\bfa,P\bfb}(\bfx))\right).
\end{equation*}
We now apply Weyl's lemma in the form \ref{Weyl} to the exponential sum $S_l$. We will choose $P$ later and large enough such that $|\gam|<(1/2)P^{d}$. We apply Lemma \ref{Weyl} to the primitive tuple $1,\gam P^{-d}$. Note that our choice of $P$ implies that $|\gam P^{-d}|=\Vert \gam P^{-d}\Vert$. Furthermore, the quantity $\sig$ occurring in the exponent in Lemma \ref{Weyl} is in our case exactly the dimension of the singular locus of $F^{[d]}(\sig_{\bfa,\bfb}(\bfx))$, which we denoted by $\rho_{(I_1,I_2)}$. The polynomial $h_l^{(\bftau)}(\sig_{\bfa,\bfb}(\bfx))$ is a sum of monomials in $\bfx_{I_3}$ with coefficients depending polynomially on $\bfa$ and $\bfb$. We consider each monomial separately. The product of each of these with the indicator function $\id_{\prod_{i\in I_3}[a_i,b_i]} (\bfx)$ can be decomposed into $\sum_m \chi_m\ome_m$ as in the proof of Lemma \ref{singseries}, with some indicator functions $\chi_m$ and $\ome_m \in \calS (c,C,s)$ for $c$ and $C$ some positive constants only depending on $\bftau$, $|\bfa|$, $|\bfb|$ and $F$. Hence we obtain
\begin{equation*}\begin{split}
|\gam^{-l}S_l|^{2^{d-1}}\ll_{c,C}& (\log P)^{|I_3|} \\&\left( P^{1-d}+|P^{-d}\gam|+P^{-d} + \min\left\{ 1,\frac{1}{|\gam P^{-d}|P^{d}}\right\} \right)^{\frac{|I_3|-\rho_{(I_1,I_2)}}{d-1}}.
\end{split}
\end{equation*}
Now we choose $P$ sufficiently large depending on $|\gam|$ such that
\begin{equation*}
|\gam^{-l}S_l|^{2^{d-1}}\ll |\gam|^\eps |\gam|^{-\frac{|I_3|-\rho_{(I_1,I_2)}}{d-1}}.
\end{equation*}
Again choosing $P$ sufficiently large (a suitable power of $|\gam|$) we see from equation (\ref{eqnsingint3}) that
\begin{equation*}
J_{(I_1,I_2,\bftau)}(\gam)\ll \sum_{l=1}^{|\bftau|}|\gam|^{\eps-2^{-d+1}\frac{|I_3|-\rho_{(I_1,I_2)}}{d-1}+l},
\end{equation*}
and hence
\begin{equation*}
J_{(I_1,I_2,\bftau)}(\gam)\ll |\gam|^{\eps-2^{-d+1}\frac{|I_3|-\rho_{(I_1,I_2)}}{d-1}+|\bftau|}.
\end{equation*}
The assumption in (\ref{integralconv1}) now shows that the integral defining $\calJ_{(I_1,I_2,\bftau)}$ in equation (\ref{eqnintconv1}) is absolutely convergent. Similarly the second part of the lemma immediately follows. For $\bftau=0$ the same arguments (in a simplified form) reduce to the classical way of bounding the singular integral and hence the proof of the lemma follows also in this case.
\end{proof}

\section{Proof of the main theorems}
In this section we collect together the information about the major and minor arcs and give a proof of Theorem \ref{thm1}, followed with a deduction of Theorem \ref{thm2}. Before, we shortly give an easy upper bound for the size of the singular loci $\rho_{(I_1,I_2)}$ in terms of the singular locus $\sig$.

\begin{lemma}\label{dimsingloci}
For any $I_1$ and $I_2$ one has
\begin{equation*}
\rho_{(I_1,I_2)}\leq \sig +|I_1| +|I_2|.
\end{equation*}
If $s-\sig > 2(|I_1|+|I_2|)$, then the homogeneous part $F^{[d]}(\sig_{\bfa,\bfb}(\bfx))$ is not identically zero.
\end{lemma}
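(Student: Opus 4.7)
My plan is to compare the singular locus $\Sing(F^{[d]}(\sig_{\bfa,\bfb}(\bfx)))$ directly with $V^*$ by identifying its ambient $\A^{|I_3|}$ with the coordinate subspace $L = \{\bfx \in \A^s : x_i = 0 \text{ for all } i \in I_1 \cup I_2\}$. Since $F$ is homogeneous of degree $d$, the degree-$d$ homogeneous part of $F(\sig_{\bfa,\bfb}(\bfx))$ as a polynomial in $\bfx_{I_3}$ consists of precisely those monomials of $F$ in which no variable indexed by $I_1 \cup I_2$ appears, i.e.\ $F^{[d]}(\sig_{\bfa,\bfb}(\bfx)) = F|_L$. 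In particular $\rho_{(I_1,I_2)}$ is manifestly independent of $\bfa,\bfb$, and under $L \cong \A^{|I_3|}$ partial differentiation with respect to $x_i$ for $i \in I_3$ commutes with restriction to $L$.

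For the first inequality I would intersect $V^*$ with $L$. By definition of $V^*$,
\[
V^* \cap L = \bigl\{\bfx \in L : (\partial F/\partial x_i)(\bfx) = 0 \text{ for all } 1 \le i \le s\bigr\}.
\]
The $|I_3|$ equations coming from $i \in I_3$ alone, viewed on $L \cong \A^{|I_3|}$, cut out exactly $\Sing(F^{[d]}(\sig_{\bfa,\bfb}(\bfx)))$, which has dimension $\rho_{(I_1,I_2)}$ by definition. Imposing the remaining $|I_1|+|I_2|$ equations $\partial F/\partial x_i = 0$ for $i \in I_1 \cup I_2$ then further cuts this variety, and by the standard consequence of Krull's Hauptidealsatz each such equation lowers the dimension of every irreducible component by at most one. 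Hence
\[
\dim(V^* \cap L) \ge \rho_{(I_1,I_2)} - |I_1| - |I_2|.
\]
On the other hand $V^* \cap L \subseteq V^*$ forces $\dim(V^* \cap L) \le \sig$, and combining the two bounds yields the first claim.

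For the second assertion I would argue by contradiction: if $F^{[d]}(\sig_{\bfa,\bfb}(\bfx))$ were identically zero as a polynomial in $\bfx_{I_3}$, every point of $\A^{|I_3|}$ would be a singular point, so $\rho_{(I_1,I_2)} = |I_3| = s - |I_1| - |I_2|$. Substituting into the bound just proved gives $s - \sig \le 2(|I_1|+|I_2|)$, contradicting the hypothesis. The only genuinely non-trivial ingredient is the dimension-drop statement used above, which is a routine consequence of Krull's Hauptidealsatz (adding $k$ polynomial equations to the defining ideal of an affine variety lowers its Krull dimension by at most $k$); no circle-method input is required, and beyond setting up the identification $L \cong \A^{|I_3|}$ the argument is essentially bookkeeping.
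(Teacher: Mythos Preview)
Your argument is correct and essentially the same as the paper's: both identify $F^{[d]}(\sig_{\bfa,\bfb}(\bfx))$ with the restriction $F|_L$ to the coordinate subspace $L=\{x_i=0:i\in I_1\cup I_2\}$, compare $\Sing(F|_L)$ with $V^*\cap L$, and invoke a Hauptidealsatz dimension count (the paper makes explicit the homogeneity that keeps each successive intersection nonempty, which you use implicitly). The only organisational difference is that the paper first lifts to $\A^s$ via the auxiliary variety $Y'=\Sing(F|_L)\times\A^{|I_1|+|I_2|}$ and then cuts down by $2(|I_1|+|I_2|)$ homogeneous equations, whereas you stay inside $L\cong\A^{|I_3|}$ throughout and need only the $|I_1|+|I_2|$ equations $(\partial F/\partial x_i)|_L=0$ for $i\in I_1\cup I_2$.
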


\begin{proof}
Recall that $F(\bfx)$ is a homogeneous form of degree $d$, and note that the homogeneous part $F^{[d]}(\sig_{\bfa,\bfb}(\bfx))$ if independent of $\bfa$ and $\bfb$. We write 
\begin{equation*}
F(\bfx)= F^{[d]}(\sig_{\bfa,\bfb}(\bfx))+\sum_{i\in I_1\cup I_2}x_iH_i(\bfx),
\end{equation*}
with $H_i(\bfx)$ homogeneous polynomials of degree $d-1$ in all of the variables $x_i$, $1\leq i\leq s$. Let $Y$ be the affine variety given by the system of equations
\begin{eqnarray}\label{eqn8.3}
\frac{\partial}{\partial x_i}F^{[d]}(\sig_{\bfa,\bfb}(\bfx))&=0,\quad i\in I_3\\
x_i=H_i(\bfx)&=0,\quad i\in I_1\cup I_2.
\end{eqnarray}
Then we have $Y\subset \Sing (F(\bfx))$, and hence $\dim Y\leq \sig$. On the other hand we may consider the affine variety $Y'\subset \A^s$ given by the system of equations (\ref{eqn8.3}) only. By definition of $\rho_{(I_1,I_2)}$ we have
\begin{equation}\label{eqn8.3b}
\dim Y'= \rho_{(I_1,I_2)}+|I_1|+|I_2|.
\end{equation}
Since all the polynomials defining $Y$ and $Y'$ are homogeneous, we have
\begin{equation*}
\dim Y\geq \dim Y'-2(|I_1|+|I_2|).
\end{equation*}
Together with equation (\ref{eqn8.3b}) this implies
\begin{equation*}
\rho_{(I_1,I_2)}\leq \sig+|I_1|+|I_2|.
\end{equation*}
In particular we have $F^{[d]}(\sig_{\bfa,\bfb}(\bfx))\neq 0$ as soon as 
\begin{equation*}
\sig+|I_1|+|I_2|<s-(|I_1|+|I_2|).
\end{equation*}
\end{proof}

We now come to the proof of our main theorem \ref{thm1}. 

\begin{proof}[Proof of Theorem \ref{thm1}]
Let $0<\eta <(1/3)d$ to be chosen later, and $K\geq 1$ as in Theorem \ref{thm1}. 
We decompose the counting function $R(P,n)$ as
\begin{equation}\label{eqn8.1}
R(P,n)=R(P,n;\grM'(\eta)) + O\left(\int_{\grm(\eta)}|S(\alp)|\d\alp\right).
\end{equation}
By Lemma \ref{minarcs} the contribution of the minor arcs is bounded by
\begin{equation}\label{eqn8.2}
\int_{\grm(\eta)}|S(\alp)|\d\alp\ll_\eps P^{s-d+\eps - \eta \left(2^{-d+1}\frac{s-\sig}{d-1}-2\right)},
\end{equation}
for any $\eps >0$. The major arc contribution is by Lemma \ref{MA2} given by
\begin{equation}\label{eqn8.4}
\begin{split}
R(P,n;\grM')&= \sum_{(I_1,I_2,\bftau)\in \calI(K)} \grS_{(I_1,I_2,\bftau)}(P,n;P^\eta)\calJ_{(I_1,I_2,\bftau)} (P^{-d n};P^\eta) P^{|I_3|-|\bftau|-d} \\ &+O\left( P^{s-K-d+(2K+3)\eta}\right).
\end{split}
\end{equation}
We next complete the singular series and singular integral in each term appearing in the sum over $(I_1,I_2,\bftau)\in \calI(K)$. Note that the number of terms in the summation is bounded by $|\calI(K)|\ll_K 1$. By Lemma \ref{singseries} we have
\begin{equation*}
\grS_{(I_1,I_2,\bftau)}(P,n;P^\eta)-\grS_{(I_1,I_2,\bftau)}(P,n)\ll P^{\eta\left(K+1-2^{-d+1}\frac{s-\sig}{d-1}\right)+\eps},
\end{equation*}
for any $\eps >0$, as soon as
\begin{equation*}
2^{-d+1}\frac{s-\sig}{d-1}>K+1.
\end{equation*}
In particular the proof of Lemma \ref{singseries} shows that both $\grS_{(I_1,I_2,\bftau)}(P,n;P^\eta)$ and $\grS_{(I_1,I_2,\bftau)}(P,n)$ are bounded by $\ll 1$. For the singular integrals we use Lemma \ref{integralconv} and observe that 
\begin{equation*}
|\calJ_{(I_1,I_2,\bftau)}(n;P^\eta)-\calJ_{(I_1,I_2,\bftau)}(n)|\ll P^{-\eta\left(2^{-d+1}\frac{s-|I_1|-|I_2|-\rho_{(I_1,I_2)}}{d-1}-1-|\bftau|\right)+\eps},
\end{equation*}
as soon as equation (\ref{integralconv1}) holds.
We replace in equation (\ref{eqn8.4}) the truncated singular integral $\calJ_{(I_1,I_2,\bftau)}(n;P^\eta)$ by $\calJ_{(I_1,I_2,\bftau)}(n)$ and $\grS_{(I_1,I_2,\bftau)}(P,n;P^\eta)$ by $\grS_{(I_1,I_2,\bftau)}(P,n)$. We obtain 
\begin{equation}\label{eqn8.5}
\begin{split}
R(P,n;\grM')&= \sum_{(I_1,I_2,\bftau)\in \calI(K)} \grS_{(I_1,I_2,\bftau)}(P,n)\calJ_{(I_1,I_2,\bftau)} (P^{-d}n) P^{|I_3|-|\bftau|-d} \\ &+O\left( P^{s-K-d+(2K+3)\eta}\right)+E_1+E_2,
\end{split}
\end{equation}
with error terms of the form
\begin{equation}\label{eqn8.6}
\begin{split}
E_1&\ll \sum_{(I_1,I_2,\bftau)\in \calI(K)}P^{\eta \left(K+1-2^{-d+1}\frac{s-\sig}{d-1}\right)}P^{|I_3|-|\bftau|-d+\eps} \\ 
&\ll \sum_{(I_1,I_2,\bftau)\in \calI(K)}P^{|I_3|-|\bftau|-d-\eta\left(2^{-d+1}\frac{s-\sig}{d-1}-K-1\right)+\eps} \\
&\ll P^{s-d-\eta\left(2^{-d+1}\frac{s-\sig}{d-1}-K-1\right)+\eps},
\end{split}
\end{equation}
and
\begin{equation}\label{eqn8.7}
E_2\ll \sum_{(I_1,I_2,\bftau)\in \calI(K)} P^{|I_3|-|\bftau|-d+\eps} P^{-\eta\left(2^{-d+1}\frac{|I_3|-\rho_{(I_1,I_2)}}{d-1}-1-|\bftau|\right)}.
\end{equation}
We now compare the different error terms. First we note that the bound for $E_1$ in (\ref{eqn8.6}) is weaker than the bound for the minor arc contribution in (\ref{eqn8.2}). Furthermore, we can estimate an individual term in the bound for $E_2$ in (\ref{eqn8.7}) by
\begin{equation*}
\begin{split}
P^{|I_3|-|\bftau|-d+\eps} &P^{-\eta\left(2^{-d+1}\frac{|I_3|-\rho_{(I_1,I_2)}}{d-1}-1-|\bftau|\right)}\\&\ll  P^{s-d+\eps - |I_1|-|I_2|-(1-\eta)|\bftau|-\eta \left(2^{-d+1}\frac{|I_3|-\rho_{(I_1,I_2)}}{d-1}-1\right)}.
\end{split}
\end{equation*}
Assume that $\eta <1$ and $s-\sig > 2(|I_1|+|I_2|)$. Together with Lemma \ref{dimsingloci} we obtain
\begin{equation*}
\begin{split}
|I_1|&+|I_2| +\eta\left(2^{-d+1}\frac{|I_3|-\rho_{(I_1,I_2)}}{d-1}-1\right) \\&\geq |I_1|+|I_2| +\eta\left(2^{-d+1}\frac{s-\sig-2(|I_1|+|I_2|)}{d-1}-1\right)
\geq \eta \left(2^{-d+1}\frac{s-\sig}{d-1}-1\right).
\end{split}
\end{equation*}
Hence we see that 
\begin{equation*}
E_2\ll P^{s-d+\eps - \eta \left(2^{-d+1}\frac{s-\sig}{d-1}-1\right)},
\end{equation*}
which is a better bound than what we obtained for $E_1$ in equation (\ref{eqn8.6}). From equation (\ref{eqn8.1}) and equation (\ref{eqn8.5}) and the bounds in (\ref{eqn8.2}) and (\ref{eqn8.6}) we conclude that
\begin{equation}\label{eqn8.10}
\begin{split}
R(P,n)&= \sum_{(I_1,I_2,\bftau)\in \calI(K)} \grS_{(I_1,I_2,\bftau)}(P,n)\calJ_{(I_1,I_2,\bftau)} (P^{-d}n) P^{|I_3|-|\bftau|-d} \\ &+O\left( P^{s-K-d+(2K+3)\eta}\right)+O\left( P^{s-d+\eps-\eta\left(2^{-d+1}\frac{s-\sig}{d-1}-K-1\right)}\right).
\end{split}
\end{equation}
We choose $\eta$ such that
\begin{equation*}
-K+(2K+3)\eta = -\eta \left(2^{-d+1}\frac{s-\sig}{d-1}-K-1\right),
\end{equation*}
which is equivalent to 
\begin{equation*}
K=\eta \left(2^{-d+1}\frac{s-\sig}{d-1}+K+2\right).
\end{equation*}
Note that the assumption $2^{-d+1}\frac{s-\sig}{d-1}>K+1$ ensures that 
\begin{equation*}
\eta < \frac{K}{2K+3}<\frac{1}{2}\leq \frac{1}{3}d,
\end{equation*}
and hence our assumption above on $\eta <1$ is justified, as well as the major arcs are disjoint as required in Lemma \ref{MA2}. Furthermore, the assumption 
\begin{equation*}
2^{-d+1}\frac{s-\sig}{d-1}>2K^2+2K-2
\end{equation*}
in the theorem ensures that
\begin{equation*}
\eta < K ( 2K^2+3K)^{-1}= (2K+3)^{-1}.
\end{equation*}
Hence we obtain 
\begin{equation}\label{eqn8.10}
\begin{split}
R(P,n)&= \sum_{(I_1,I_2,\bftau)\in \calI(K)} \grS_{(I_1,I_2,\bftau)}(P,n)\calJ_{(I_1,I_2,\bftau)} (P^{-d}n) P^{|I_3|-|\bftau|-d} \\ &+O\left( P^{s-d-(K-1)-\del}\right),
\end{split}
\end{equation}
for this choice of $\eta$ and some $\del >0$.
\end{proof}

Finally we deduce Theorem \ref{thm2} from Theorem \ref{thm1}

\begin{proof}[Proof of Theorem \ref{thm2}]
Recall that
\begin{equation}\label{eqnthm2p}
\begin{split}
N_X(P)&=\frac{1}{2}\sum_{e=1}^\infty \mu(e) \left(R_\calB(e^{-1}P,0)-1\right)\\
&=\frac{1}{2}\sum_{e=1}^{[P]} \mu(e) \left(R_\calB(e^{-1}P,0)-1\right).
\end{split}
\end{equation}
By Theorem \ref{thm1} we have for any $e\leq P$
\begin{equation*}
\begin{split}
R_\calB(e^{-1}P,0)-1&= \sum_{(I_1,I_2,\bftau)\in \calI(K)}\grS_{(I_1,I_2,\bftau)}(e^{-1}P,0)\calJ_{(I_1,I_2,\bftau)}(0)(e^{-1}P)^{s-|I_1|-|I_2|-|\bftau|-d}\\ &+O\left((e^{-1}P)^{s-d-(K-1)-\del}\right).
\end{split}
\end{equation*}
By Lemma \ref{singseries} we observe that
\begin{equation*}
\begin{split}
\widetilde{\grS}_{(I_1,I_2,\bftau)}(P)= &\frac{1}{2}\sum_{e=1}^{[P]}\mu(e) e^{-(s-|I_1|-|I_2|-|\bftau|-d)}\grS_{(I_1,I_2,\bftau)}(e^{-1}P,0) \\ &+O(P^{-(s-|I_1|-|I_2|-|\bftau|-d)+1}).
\end{split}
\end{equation*}
Putting the higher order asymptotic expansions for $R_\calB(e^{-1}P,0)-1$ into equation (\ref{eqnthm2p}) finally leads to
\begin{equation*}
\begin{split}
N_X(P)=& \sum_{(I_1,I_2,\bftau)\in \calI(K)} \widetilde{\grS}_{(I_1,I_2,\bftau)}(P)\calJ_{(I_1,I_2,\bftau)} (0) P^{s-|I_1|-|I_2|-|\bftau|-d} \\ &+O\left( P^{s-d-(K-1)-\del}\right).
\end{split}
\end{equation*}
\end{proof}

\section{Singular integral II}\label{SingIntII}
In this section we come back to the study of the singular integrals $\calJ_{(I_1,I_2,\bftau)}(n)$. We shortly recall the definitions
\begin{equation*}
J_{(I_1,I_2,\bftau)}(\gam)=\int_{\prod_{i\in I_3}[a_i,b_i]}f^{(\bftau)}(\gam,\sig_{\bfa,\bfb}(\bfx))\d \bfx_{I_3},
\end{equation*}
and
\begin{equation*}
\calJ_{(I_1,I_2,\bftau)}(n)=\int_{-\infty}^\infty J_{(I_1,I_2,\bftau)}(\gam)e(-\gam n)\d\gam,
\end{equation*}
which is absolutely convergent under condition (\ref{integralconv1}) by Lemma \ref{integralconv}. Recall that we have set
\begin{equation*}
f^{(\bftau)}(\gam,\sig_{\bfa,\bfb}(\bfx))= \partial_\bfx^{\bftau}e(\gam F(\bfx))|_{\bfx=\sig_{\bfa,\bfb}(\bfx)}.
\end{equation*}
Let $\bfx_{I_1}$ and $\bfx_{I_2}$ be vectors defined in an analogous way as $\bfx_{I_3}$, i.e. $\bfx_{I_1}=(x_i)_{i\in I_1}$ and $\bfx_{I_2}= (x_i)_{i\in I_2}$. As a generalisation of $J_{(I_1,I_2,\bftau)}(\gam)$, it is convenient to define 
\begin{equation*}
J_{(I_1,I_2,\bftau)}(\gam;\bfx_{I_1},\bfx_{I_2})=\int_{\prod_{i\in I_3}[a_i,b_i]}f^{(\bftau)}(\gam,\bfx)\d \bfx_{I_3},
\end{equation*}
and
\begin{equation*}
\calJ_{(I_1,I_2,\bftau)}(n;\bfx_{I_1},\bfx_{I_2})=\int_{-\infty}^\infty J_{(I_1,I_2,\bftau)}(\gam;\bfx_{I_1},\bfx_{I_2})e(-\gam n)\d\gam.
\end{equation*}
This integral is absolutely convergent for $\bfx_{I_1}\in \prod_{i\in I_1} [a_i,b_i]$ and $\bfx_{I_2}\in \prod_{i\in I_2}[a_i,b_i]$, as soon as (\ref{integralconv1}) holds. Note that we have
\begin{equation*}
J_{(I_1,I_2,\bftau)}(\gam)=J_{(I_1,I_2,\bftau)}(\gam;\bfb_{I_1},\bfa_{I_2}),
\end{equation*}
and
\begin{equation*}
\calJ_{(I_1,I_2,\bftau)}(n)=\calJ_{(I_1,I_2,\bftau)}(n;\bfb_{I_1},\bfa_{I_2}).
\end{equation*}
Furthermore, we write $\calJ_{(I_2,I_2)}(n;\bfx_{I_1},\bfx_{I_2})$ for $\calJ_{(I_1,I_2,\mathbf{0})}(n;\bfx_{I_1},\bfx_{I_2})$.\par
In order to give a different description of $\calJ_{(I_1,I_2,\bftau)}(n;\bfx_{I_1},\bfx_{I_2})$, we proceed in a similar way as Schmidt in his work \cite{Schmidt80}. However, in contrast to his treatment we need to introduce some different kernel with sufficient decay. We choose to use the smooth and compactly supported weight function
\begin{align*}
\ome(x)=\left\{\begin{array}{ccc}c_0 e^{-(1-x^2)^{-1}} & \mbox{ for } & |x|<1 \\ 0 & \mbox{ for } & |x|\geq 1,\end{array}\right.
\end{align*}
where $c_0$ is a normalisation constant such that $\int_\R \ome(x)\d x=1$. Let 
\begin{equation*}
\hat{\ome}(y)=\int_\R \ome(\gam)e(\gam y)\d\gam,
\end{equation*}
be the Fourier transform of $\ome$. Then we have $\hat{\ome}(0)=1$ and $\omehat(x)=\omehat(0)+O(|x|)$. 
We now define the modified singular integrals as 
\begin{equation*}
\calJ^{T}_{(I_1,I_2,\bftau)}(n;\bfx_{I_1},\bfx_{I_2})=\int_{-\infty}^\infty \omehat(T^{-1}\gam)J_{(I_1,I_2,\bftau)}(\gam;\bfx_{I_1},\bfx_{I_2})e(-\gam n)\d\gam.
\end{equation*}
Next we observe that $\calJ^{T}_{(I_1,I_2,\bftau)}(n;\bfx_{I_1},\bfx_{I_2})$ is a good model for the original singular integral as $T$ gets large.

\begin{lemma}\label{SingInta}
Assume that (\ref{integralconv1}) holds. Then one has
\begin{equation*}
\calJ^{T}_{(I_1,I_2,\bftau)}(n;\bfx_{I_1},\bfx_{I_2})\rightarrow \calJ_{(I_1,I_2,\bftau)}(n;\bfx_{I_1},\bfx_{I_2}),\quad \mbox{ for } T\rightarrow \infty,
\end{equation*}
and the convergence is uniform in $\bfx_{I_1}\in \prod_{i\in I_1}[a_i,b_i]$ and $\bfx_{I_2}\in \prod_{i\in I_2}[a_i,b_i]$.
\end{lemma}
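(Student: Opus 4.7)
The plan is to study the difference
\begin{equation*}
\calJ^{T}_{(I_1,I_2,\bftau)}(n;\bfx_{I_1},\bfx_{I_2}) - \calJ_{(I_1,I_2,\bftau)}(n;\bfx_{I_1},\bfx_{I_2}) = \int_{-\infty}^\infty \bigl(\omehat(T^{-1}\gam)-1\bigr)\, J_{(I_1,I_2,\bftau)}(\gam;\bfx_{I_1},\bfx_{I_2})\, e(-\gam n)\, \d\gam
\end{equation*}
and show directly that it tends to zero as $T\to\infty$ uniformly in $(\bfx_{I_1},\bfx_{I_2})\in\prod_{i\in I_1}[a_i,b_i]\times\prod_{i\in I_2}[a_i,b_i]$. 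The idea is a standard splitting argument, exploiting absolute integrability of $J$ in $\gam$ on the tail and continuity of $\omehat$ at the origin on the bulk.

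A preliminary observation is that the proof of Lemma \ref{integralconv}, read with $\bfx_{I_1},\bfx_{I_2}$ treated as parameters rather than fixed at the corner $(\bfb_{I_1},\bfa_{I_2})$, yields the same bound $|J_{(I_1,I_2,\bftau)}(\gam;\bfx_{I_1},\bfx_{I_2})|\ll|\gam|^{\eps-2^{-d+1}(|I_3|-\rho_{(I_1,I_2)})/(d-1)+|\bftau|}$ for $|\gam|\ge 1$, together with the trivial estimate $|J_{(I_1,I_2,\bftau)}(\gam;\bfx_{I_1},\bfx_{I_2})|\ll 1$ for $|\gam|\le 1$. The crucial point is uniformity: the Weyl bound in Lemma \ref{Weyl} is applied to the polynomial $F(\sig_{\bfa,\bfb}(\bfx))$ whose coefficients (as a polynomial in $\bfx_{I_3}$) depend polynomially on the parameters $\bfx_{I_1},\bfx_{I_2}$ and hence stay uniformly bounded on the compact box; the relevant singular locus dimension $\rho_{(I_1,I_2)}$ depends only on the degree-$d$ part, which is independent of $\bfx_{I_1},\bfx_{I_2}$. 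Under hypothesis (\ref{integralconv1}) this exponent is strictly below $-1$, so we obtain a uniform integrable majorant $\Phi(\gam)$ with $\int_{\R}\Phi(\gam)\,\d\gam<\infty$.

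Now given $\eps>0$, I would first choose $A>0$ so large that $\int_{|\gam|>A}\Phi(\gam)\,\d\gam<\eps/(2(1+\|\omehat\|_\infty))$, which bounds the tail contribution by $\eps/2$ uniformly in $T$ and in $(\bfx_{I_1},\bfx_{I_2})$ via $|\omehat(T^{-1}\gam)-1|\le 1+\|\omehat\|_\infty$. For the bulk $|\gam|\le A$, the identity $\omehat(0)=1$ together with the estimate $\omehat(y)=\omehat(0)+O(|y|)$ noted in the text gives $\sup_{|\gam|\le A}|\omehat(T^{-1}\gam)-1|\ll A/T$, so with this fixed $A$, choosing $T$ large enough makes
\begin{equation*}
\int_{|\gam|\le A}|\omehat(T^{-1}\gam)-1|\,|J_{(I_1,I_2,\bftau)}(\gam;\bfx_{I_1},\bfx_{I_2})|\, \d\gam \;\ll\; \frac{A}{T}\int_{|\gam|\le A}\Phi(\gam)\,\d\gam \;<\; \frac{\eps}{2}
\end{equation*}
uniformly in $(\bfx_{I_1},\bfx_{I_2})$, finishing the proof. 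The main (and only real) obstacle is verifying the uniformity in the parameters of the decay bound on $J$; once that is in hand, the rest is a dominated-convergence-type argument with a quantitative uniform modulus.
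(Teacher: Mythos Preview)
Your proof is correct and follows essentially the same approach as the paper: both rely on the uniform decay bound for $J_{(I_1,I_2,\bftau)}(\gam;\bfx_{I_1},\bfx_{I_2})$ inherited from the proof of Lemma~\ref{integralconv}, then split the integral and use $\omehat(0)=1$ on the bulk and integrability on the tail. The only cosmetic difference is that the paper splits at $|\gam|=T$ rather than at a fixed level $A$, which yields an explicit rate $O(T^{-\del})$ instead of your $\eps$-style argument, but the underlying mechanism is identical.
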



\begin{proof}
We bound the difference in the lemma by
\begin{equation*}
\calJ_{(I_1,I_2,\bftau)}(n;\bfx_{I_1},\bfx_{I_2})-\calJ^{T}_{(I_1,I_2,\bftau)}(n;\bfx_{I_1},\bfx_{I_2})\ll A+B,
\end{equation*}
with
\begin{equation*}
A= \int_{|\gam|\leq T}|1-\omehat(T^{-1}\gam)||J_{(I_1,I_2,\bftau)}(\gam;\bfx_{I_1},\bfx_{I_2})|\d\gam,
\end{equation*}
and
\begin{equation*}
\begin{split}
B=&\int_{|\gam|>T}|1-\omehat(T^{-1}\gam)| |J_{(I_1,I_2,\bftau)}(\gam;\bfx_{I_1},\bfx_{I_2})|\d\gam\\ \ll  &\int_{|\gam|>T}  |J_{(I_1,I_2,\bftau)}(\gam;\bfx_{I_1},\bfx_{I_2})|\d\gam.
\end{split}
\end{equation*}
We recall by the proof of Lemma \ref{integralconv} that for any $\eps >0$ we have
\begin{equation*}
J_{(I_1,I_2,\bftau)}(\gam;\bfx_{I_1},\bfx_{I_2})\ll_\eps |\gam|^{\eps-2^{-d+1}\frac{|I_3|-\sig_{(I_1,I_2)}}{d-1}+|\bftau|},
\end{equation*}
uniformly in $\bfx_{I_1}\in \prod_{i\in I_1}[a_i,b_i]$ and $\bfx_{I_2}\in \prod_{i\in I_2}[a_i,b_i]$. Hence we can bound the first term by
\begin{equation*}
\begin{split}
A\ll &\int_{|\gam|\leq T} |\gam T^{-1}|\min\left(1,|\gam|^{\eps-2^{-d+1}\frac{|I_3|-\rho_{(I_1,I_2)}}{d-1}+|\bftau|}\right)\d\gam\\ \ll &\int_{|\gam|<1}T^{-1}\d\gam+ \int_{1\leq |\gam|\leq T}  |\gam T^{-1}||\gam|^{\eps-2^{-d+1}\frac{|I_3|-\rho_{(I_1,I_2)}}{d-1}+|\bftau|}\d\gam\ll T^{-\del},
\end{split}
\end{equation*}
for some $\del >0$ as soon as 
\begin{equation*}
|I_3|-\rho_{(I_1,I_2)}> (|\bftau|+1)(d-1)2^{d-1}.
\end{equation*}
The same argument shows that $B\ll T^{-\del}$ under condition (\ref{integralconv1}) and hence the lemma follows.
\end{proof}

 Next we aim to find some interpretation for the integral $\calJ^T_{(I_1,I_2)}(n;\bfx_{I_1},\bfx_{I_2})$. By Fubini's theorem we have
\begin{equation*}
\begin{split}
\calJ^T_{(I_1,I_2)}(n;\bfx_{I_1},\bfx_{I_2})&= \int_{\prod_{i\in I_3}[a_i,b_i]}\int_\R \omehat(T^{-1}\gam)e(\gam(F(\bfx)-n))\d\gam\d\bfx_{I_3} \\ &= T \int_{\prod_{i\in I_3}[a_i,b_i]}\int_\R \omehat(\gam)e(T\gam(F(\bfx)-n))\d\gam\d\bfx_{I_3}\\
&= T\int_{\prod_{i\in I_3}[a_i,b_i]}\ome(T(F(\bfx)-n))\d\bfx_{I_3}.
\end{split}
\end{equation*}
If we set $\ome_T(y)=T\ome(T y)$, then we can rewrite the last equation as
\begin{equation}\label{SingIntVol}
\calJ^T_{(I_1,I_2)}(n;\bfx_{I_1},\bfx_{I_2})= \int_{\prod_{i\in I_3}[a_i,b_i]}\ome_T(F(\bfx)-n)\d\bfx_{I_3}.
\end{equation}
For $T\rightarrow \infty$, the integral $\calJ^T_{(I_1,I_2)}(n;\bfx_{I_1},\bfx_{I_2})$ hence converges to the volume of the bounded piece of the hypersurface $F(\bfx)=n$ inside the box $\prod_{i\in I_3}[a_i,b_i]$, where $\bfx_{I_1}$ and $\bfx_{I_2}$ are considered fixed. By Lemma \ref{SingInta} this limit equals the singular integral $\calJ_{(I_1,I_2)}(n;\bfx_{I_1},\bfx_{I_2})$.\par

The following lemma relates the singular integrals $\calJ_{(I_1,I_2,\bftau)}(n)$ for non-zero $\bftau$ to the function $\calJ_{(I_1,I_2)}(n;\bfx_{I_1},\bfx_{I_2})$ and hence gives a natural interpretation for these kinds of singular integrals. 

\begin{lemma}\label{SingIntb}
Assume that (\ref{integralconv1}) holds. Then one has
\begin{equation*}
\calJ_{(I_1,I_2,\bftau)}(n)= \partial_\bfx^{\bftau}\calJ_{(I_1,I_2)}(n;\bfx_{I_1},\bfx_{I_2})|_{\bfx=\sig_{\bfa,\bfb}(\bfx)}.
\end{equation*}
In particular, the singular integral $\calJ_{(I_1,I_2,\bftau)}(n)$ is the partial derivative $\partial^{\bftau}_\bfx$ of the function in $\bfx_{I_1}$ and $\bfx_{I_2}$ describing the volume of the bounded piece of the hypersurface $F(\bfx)=n$ inside the box $\prod_{i\in I_3}[a_i,b_i]$, at the point $\bfx_{I_1}=\bfb$ and $\bfx_{I_2}=\bfa$.
\end{lemma}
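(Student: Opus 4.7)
The plan is to prove the identity first for the truncated singular integrals $\calJ^T$, where the rapid decay of $\omehat$ allows free interchange of differentiation and integration, and then to pass to the limit $T\to\infty$ using Lemma \ref{SingInta} applied to all intermediate derivatives.

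First I will fix $T$ and establish the truncated version
$$\partial_\bfx^{\bftau} \calJ^T_{(I_1,I_2)}(n;\bfx_{I_1},\bfx_{I_2}) = \calJ^T_{(I_1,I_2,\bftau)}(n;\bfx_{I_1},\bfx_{I_2}).$$
Since $\bftau$ is supported in $I_1 \cup I_2$, the derivative acts only on variables outside the $\bfx_{I_3}$ integration domain. The integrand $e(\gam F(\bfx))$ is smooth and the box $\prod_{i \in I_3}[a_i,b_i]$ is compact, so differentiation under the $\bfx_{I_3}$ integral yields $\partial_\bfx^\bftau J_{(I_1,I_2)}(\gam;\bfx_{I_1},\bfx_{I_2}) = J_{(I_1,I_2,\bftau)}(\gam;\bfx_{I_1},\bfx_{I_2})$. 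Because $\omehat$ is Schwartz (being the Fourier transform of a compactly supported smooth function), the product $\omehat(T^{-1}\gam) J_{(I_1,I_2,\bftau')}(\gam;\bfx_{I_1},\bfx_{I_2})$ has rapid decay in $\gam$ uniformly over $\bfx$ in compact sets for every $\bftau'$, so dominated convergence justifies interchanging $\partial_\bfx^{\bftau}$ with the $\gam$ integral as well.

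Next I will let $T \to \infty$. The assumption (\ref{integralconv1}) for $\bftau$ implies the same condition for every multi-index $\bftau'$ with $\tau'_i \leq \tau_i$ componentwise, since the bound only depends on $|\bftau|$ and weakens as $|\bftau'|$ decreases. Applying Lemma \ref{SingInta} to each such $\bftau'$ gives uniform convergence $\calJ^T_{(I_1,I_2,\bftau')}(n;\bfx_{I_1},\bfx_{I_2}) \to \calJ_{(I_1,I_2,\bftau')}(n;\bfx_{I_1},\bfx_{I_2})$ on $\prod_{i\in I_1}[a_i,b_i] \times \prod_{i\in I_2}[a_i,b_i]$. Combined with the truncated identity from the previous paragraph, this amounts to uniform convergence of $\partial_\bfx^{\bftau'} \calJ^T_{(I_1,I_2)}$ to $\calJ_{(I_1,I_2,\bftau')}$ for every $\bftau' \leq \bftau$. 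An iterated application of the standard calculus fact that uniform convergence of a $C^1$ sequence together with its derivatives forces the limit to be $C^1$ with derivative equal to the limit of the derivatives then shows that $\calJ_{(I_1,I_2)}(n;\bfx_{I_1},\bfx_{I_2})$ is $|\bftau|$-times continuously differentiable and
$$\partial_\bfx^\bftau \calJ_{(I_1,I_2)}(n;\bfx_{I_1},\bfx_{I_2}) = \calJ_{(I_1,I_2,\bftau)}(n;\bfx_{I_1},\bfx_{I_2}).$$
Evaluating at $\bfx = \sig_{\bfa,\bfb}(\bfx)$ and using $\calJ_{(I_1,I_2,\bftau)}(n) = \calJ_{(I_1,I_2,\bftau)}(n;\bfb_{I_1},\bfa_{I_2})$ gives the first assertion. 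The geometric interpretation follows from formula (\ref{SingIntVol}) together with Lemma \ref{SingInta}, which identify $\calJ_{(I_1,I_2)}(n;\bfx_{I_1},\bfx_{I_2})$ with the volume of the bounded piece of $\{F(\bfx)=n\}$ inside $\prod_{i \in I_3}[a_i,b_i]$ at fixed $\bfx_{I_1}, \bfx_{I_2}$. The main obstacle is the limit interchange in the second step, which is overcome precisely by invoking Lemma \ref{SingInta} at all sub-multi-indices $\bftau' \leq \bftau$ simultaneously.
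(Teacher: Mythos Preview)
Your proof is correct but takes a somewhat different route from the paper. The paper argues directly: the proof of Lemma \ref{integralconv} already shows that both $\calJ_{(I_1,I_2)}(n;\bfx_{I_1},\bfx_{I_2})$ and $\calJ_{(I_1,I_2,\bftau)}(n;\bfx_{I_1},\bfx_{I_2})$ are uniformly convergent as improper $\gam$-integrals for $\bfx_{I_1},\bfx_{I_2}$ ranging over a slightly enlarged box, and then invokes Leibniz' rule for differentiation under an improper integral to swap $\partial_\bfx^{\bftau}$ with $\int_\R\ldots\d\gam$ in one stroke. Your argument instead passes through the truncated integrals $\calJ^T$: you first differentiate under the $\gam$-integral in the truncated setting (where the Schwartz decay of $\omehat$ makes this trivial), and then upgrade to the limit $T\to\infty$ by applying Lemma \ref{SingInta} to \emph{every} sub-multi-index $\bftau'\le\bftau$ and using the classical ``uniform convergence of derivatives'' lemma iteratively. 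Both approaches hinge on the same underlying estimate for $J_{(I_1,I_2,\bftau')}(\gam;\cdot)$ coming from Lemma \ref{integralconv}, but the paper packages it as uniform convergence of the improper integral, whereas you package it via Lemma \ref{SingInta} and a limit argument. Your route is slightly longer but has the merit of using only very elementary real-analysis tools at each step; the paper's route is shorter but assumes familiarity with the parameter-dependent Leibniz rule for improper integrals.
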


\begin{proof}
Recall that
\begin{equation*}
\calJ_{(I_1,I_2)}(n;\bfx_{I_1},\bfx_{I_2})=\int_\R e(-\gam n) \int_{\prod_{i\in I_3}[a_i,b_i]}e(\gam F(\bfx))\d\bfx_{I_3}\d\gam.
\end{equation*}
The proof of Lemma \ref{integralconv} shows that $\calJ_{(I_1,I_2)}(n;\bfx_{I_1},\bfx_{I_2})$ as an integral with respect to the integration variable $\gam$, is uniformly convergent with respect to $\bfx_{I_1}\in \prod_{i\in I_1}[a_i-1,b_i+1]$ and $\bfx_{I_2}\in \prod_{i\in I_2}[a_i-1,b_i+1]$. The same holds for the integral
\begin{equation*}
\begin{split}
\calJ_{(I_1,I_2,\bftau)}(n;\bfx_{I_1},\bfx_{I_2})&=\int_\R e(-\gam n) \partial_\bfx^{\bftau}\int_{\prod_{i\in I_3}[a_i,b_i]}e(\gam F(\bfx))\d\bfx_{I_3}\d\gam \\ &=\int_\R e(-\gam n) \int_{\prod_{i\in I_3}[a_i,b_i]}\partial_\bfx^{\bftau}e(\gam F(\bfx))\d\bfx_{I_3}\d\gam.
\end{split}
\end{equation*}
Hence we see that by Leibniz' rule we have
\begin{equation*}
\calJ_{(I_1,I_2,\bftau)}(n)= \partial_\bfx^{\bftau}\calJ_{(I_1,I_2)}(n;\bfx_{I_1},\bfx_{I_2})|_{\bfx=\sig_{\bfa,\bfb}(\bfx)},
\end{equation*}
which proves the lemma.
\end{proof}

\section{Singular Series II}\label{SingSeriesII}
In this section we give some interpretation of the singular series $\grS_{(I_1,I_2,\bftau)}(P,n)$. If $I_1=I_2=\emptyset$ (and hence $\bftau=0$), then this series reduces to the classical singular series. The function $S_{(\emptyset,\emptyset,\mathbf{0})}(P;r,q)$ is multiplicative in $q$ in a sense that 
\begin{equation*}
S_{(\emptyset,\emptyset,\mathbf{0})}(P;r,q) S_{(\emptyset,\emptyset,\mathbf{0})}(P;r',q') =S_{(\emptyset,\emptyset,\mathbf{0})}(P;rq'+r'q,qq')
\end{equation*}
for coprime moduli $(q,q')=1$. This leads to an expression of $\grS_{(\emptyset,\emptyset,\mathbf{0})}(P,n)$ as a product of local densities. However, if not both of $I_1$ and $I_2$ are empty we do not expect the same multiplicative behaviour of $S_{(I_1,I_2,\bftau)}(P;r,q)$ because of the presence of the Bernoulli polynomials $\bet_{\tau_i+1}\left(\frac{Pb_i-z_i}{q}\right)$. Hence we cannot expect to factorize $\grS_{(I_1,I_2,\bftau)}(P,n)$ in the traditional way. In order to get some interpretation for these terms, we take the following approach. We truncate the series $\grS_{(I_1,I_2,\bftau)}(P,n)$ at some height $q\leq Q$ and interpret the truncated singular series up to a small error as a weighted number of local solutions modulo $Q!$. For this we need to lift the denominators in the exponential sums $S_{(I_1,I_2,\bftau)}(P;r,q)$ all to the same denominator. In the case of the classical singular series one clearly has
\begin{equation*}
S_{(\emptyset,\emptyset,\mathbf{0})}(P;dr,dq)= d^s S_{(\emptyset,\emptyset,\mathbf{0})}(P;r,q).
\end{equation*}
In the case of generalized exponential sums $S_{(I_1,I_2,\bftau)}(P;r,q)$, which may contain products of Bernoulli polynomials, this is less obvious. In this case the analogous observation is a consequence of the multiplication theorem for Bernoulli numbers. This states that for any $d\geq 1$ one has
\begin{equation}\label{Bernoulli}
B_n(dx)=d^{n-1}\sum_{k=0}^{d-1}B_n\left(x+\frac{k}{d}\right),
\end{equation}
see for example \cite{How95} for a reference.

\begin{lemma}\label{lll}
Assume that $d\geq 1$. Then one has
\begin{equation*}
S_{(I_1,I_2,\bftau)}(P;dr,dq)= d^{|I_3|-|\bftau|}S_{(I_1,I_2,\bftau)}(P;r,q).
\end{equation*}
\end{lemma}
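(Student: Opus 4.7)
The plan is to evaluate $S_{(I_1,I_2,\bftau)}(P;dr,dq)$ by parametrising each coordinate $z_i \in \{0,1,\dots,dq-1\}$ as $z_i = w_i + k_i q$ with $0 \le w_i < q$ and $0 \le k_i < d$. This splits the full sum over $0 \le \bfz < dq$ into a double sum over $\bfw$ and $\bfk$, which I will reduce coordinate by coordinate.

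First, I would observe the simplification of the exponential factor. Since $F \in \Z[\bfx]$ is a polynomial with integer coefficients, $F(\bfw + q\bfk) \equiv F(\bfw) \pmod{q}$, and together with $dr/(dq) = r/q$ this gives
\begin{equation*}
e\!\left(\tfrac{dr}{dq}F(\bfw+q\bfk)\right) = e\!\left(\tfrac{r}{q}F(\bfw)\right),
\end{equation*}
which is independent of $\bfk$. Hence for each index $i \in I_3$, the inner sum over $k_i$ simply contributes a factor $d$, accounting altogether for $d^{|I_3|}$.

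Next I would handle the Bernoulli factors for $i \in I_1 \cup I_2$. Here the key is the multiplication formula, which for the periodic Bernoulli polynomial takes the form
\begin{equation*}
\bet_n(dx) = d^{n-1}\sum_{k=0}^{d-1}\bet_n\!\left(x + \tfrac{k}{d}\right),
\end{equation*}
valid because both sides are periodic of period $1/d$ in $x$ and the identity for the ordinary Bernoulli polynomial $B_n(dx) = d^{n-1}\sum_k B_n(x + k/d)$ applies on $[0,1/d)$. Applied with $x = (Pb_i - w_i)/(qd)$ (after using the period-$1$ symmetry $\bet_n(x - k/d) = \bet_n(x + (d-k)/d)$ to turn the shifts $-k_i q/(qd) = -k_i/d$ into $+k_i/d$), this gives
\begin{equation*}
\sum_{k_i = 0}^{d-1} \bet_{\tau_i+1}\!\left(\tfrac{Pb_i - w_i - k_i q}{dq}\right) = d^{-\tau_i}\,\bet_{\tau_i+1}\!\left(\tfrac{Pb_i - w_i}{q}\right),
\end{equation*}
and analogously for $i \in I_2$ with $b_i$ replaced by $a_i$. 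Thus each $i \in I_1 \cup I_2$ contributes a factor $d^{-\tau_i}$, so that collecting everything yields the prefactor $d^{|I_3|}\cdot d^{-\sum_{i\in I_1\cup I_2}\tau_i} = d^{|I_3| - |\bftau|}$ since $\tau_i = 0$ for $i \in I_3$.

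The main subtlety is the verification that the multiplication formula carries over unchanged to the periodic version $\bet_n$, and the associated bookkeeping with the sign of the shift $k_i/d$. Once that is settled, the remaining sum over $\bfw$ is exactly $S_{(I_1,I_2,\bftau)}(P;r,q)$, and the asserted identity follows.
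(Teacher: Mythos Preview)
Your proof is correct and follows the same strategy as the paper: split $\bfz$ into residues modulo $q$ plus multiples of $q$, note that the exponential factor depends only on the residue, and use the multiplication theorem for Bernoulli polynomials to collapse the inner sum. The one difference is a genuine streamlining on your part: the paper converts $\bet_{\tau_i+1}$ back to the ordinary polynomial $B_{\tau_i+1}$ by carefully choosing the interval over which $z_i$ ranges (so that the argument lands in $[0,1)$), applies the classical multiplication formula, and then converts back. You instead observe that the multiplication identity holds directly for the periodic functions $\bet_n$, which lets you keep the summation range $0\le w_i<q$ throughout and avoids the interval-shifting bookkeeping. Both arguments are equivalent, but yours is cleaner.
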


\begin{proof}
For simplicity of notation we write 
\begin{equation*}
\varrho_{(I_1,I_2,\bftau)}:= (-1)^{|I_2|}\prod_{i\in I_1\cup I_2}\frac{(-1)^{\tau_i+1}}{(\tau_i+1)!}.
\end{equation*}
Then we can write the exponential sum of interest as
\begin{equation*}
\begin{split}
S_{(I_1,I_2,\bftau)}(P;dr,dq)= &\varrho_{(I_1,I_2,\bftau)}\sum_{\bfz' \mmod q} e\left(\frac{r}{q}F(\bfz')\right)\prod_{i\in I_1} \bet_{\tau_i+1}\left(\frac{Pb_i-z_i'}{dq}\right)\\ &\prod_{i\in I_2}\bet_{\tau_i+1}\left(\frac{Pa_i-z_i'}{dq}\right).
\end{split}
\end{equation*}
Next we rewrite the variables $z_i'$ in the summation as $z_i'=z_i+qh_i$ with $0\leq h_i<d$ and $z_i$ running through an interval of length $q$, such that the following holds. If $i\in I_1$ one has
\begin{equation*}
-1\leq \frac{Pb_i-z_i}{dq}-\frac{h_i}{d}<0
\end{equation*}
for all choices of $0\leq h_i<d$. Similarly for $i\in I_2$ or $i$ not contained in $I_1\cup I_2$. If for example $i\in I_1$, then one has 
\begin{equation*}
-1\leq \frac{Pb_i-z_i}{q}<0.
\end{equation*}
For $i\in I_1$ we need to compute the sum
\begin{equation*}
\begin{split}
\sum_{0\leq h_i<d}\bet_{\tau_i+1}\left(\frac{Pb_i-z_i-qh_i}{dq}\right) &= \sum_{0\leq h_i<d}B_{\tau_i+1}\left(1+\frac{Pb_i-z_i}{dq}-\frac{h_i}{d}\right)\\ 
&= \sum_{0\leq h_i<d}B_{\tau_i+1}\left(\frac{1}{d}+\frac{Pb_i-z_i}{dq}+\frac{d-1-h_i}{d}\right)
\\ &= \sum_{0\leq h_i<d}B_{\tau_i+1}\left(\frac{1}{d}+\frac{Pb_i-z_i}{dq}+\frac{h_i}{d}\right).
\end{split}
\end{equation*}
We apply the multiplication theorem for Bernoulli numbers (\ref{Bernoulli}) and obtain 
\begin{equation*}
\begin{split}
\sum_{0\leq h_i<d}\bet_{\tau_i+1}\left(\frac{Pb_i-z_i-qh_i}{dq}\right)&= d^{-\tau_i} B_{\tau_i+1}\left(1+\frac{Pb_i-z_i}{q}\right) \\ &= d^{-\tau_i} \bet_{\tau_i+1}\left(\frac{Pb_i-z_i}{q}\right).
\end{split}
\end{equation*}
Similarly the same computation holds for $i\in I_2$ with $b_i$ replaced by $a_i$. Hence we obtain
\begin{equation*}
\begin{split}
S_{(I_1,I_2,\bftau)}(P;dr,dq)=& \varrho_{(I_1,I_2,\bftau)}d^{|I_3|-|\bftau|} \sum_{\bfz \mmod d} e\left(\frac{r}{q}F(\bfz)\right) \prod_{i\in I_1} \bet_{\tau_i+1}\left(\frac{Pb_i-z_i}{q}\right)\\ &\prod_{i\in I_2}\bet_{\tau_i+1}\left(\frac{Pa_i-z_i}{q}\right) \\ 
= &d^{|I_3|-|\bftau|}S_{(I_1,I_2,\bftau)}(P;r,q).
\end{split}
\end{equation*}
This completes the proof of the lemma.

\end{proof}

Next we consider a truncated piece of the singular series $\grS_{(I_1,I_2,\bftau)}(P,n)$ in the case where it is absolutely convergent. Under the assumptions of Lemma \ref{singseries} we have 
\begin{equation*}
\grS_{(I_1,I_2,\bftau)}(P,n)= \sum_{q|Q!}\sum_{\substack{r=1\\ (r,q)=1}}^q q^{-|I_3|+|\bftau|}S_{(I_1,I_2,\bftau)}(P;r,q)e\left(-\frac{r}{q}n\right)+O(Q^{-\del}).
\end{equation*}
For some $q$ appearing in the summation we let $d$ be defined by $qd=Q!$. Using Lemma \ref{lll} we rewrite the sum on the right hand side as
\begin{equation}\label{aux9}
\begin{split}
\grS_{(I_1,I_2,\bftau)}(P,n)=&\sum_{q|Q!} \sum_{\substack{r=1\\ (r,q)=1}}^q (qd)^{-|I_3|+|\bftau|}S_{(I_1,I_2,\bftau)}(P;rd,qd) e\left(\frac{-rd}{Q!}n\right) +O(Q^{-\del})\\ &= \sum_{r'=1}^{Q!} (Q!)^{-|I_3|+|\bftau|}S_{(I_1,I_2,\bftau)}(P;r',Q!)e\left(-\frac{r'}{Q!}n\right)+O(Q^{-\del}).
\end{split}
\end{equation}
Note that by orthogonality one has
\begin{equation*}
\sum_{r=1}^{Q!} e\left(\frac{r}{Q!}(F(\bfz)-n)\right)= \left\{ \begin{array}{cc} Q!& \mbox{ if } F(\bfz)-n\equiv 0 \mmod Q! \\ 0 & \mbox{ otherwise}.\end{array}\right.
\end{equation*}
Let
\begin{equation*}
\bet_\bftau(\bfz,Q!)= \varrho_{(I_1,I_2,\bftau)}\prod_{i\in I_1} \bet_{\tau_i+1} \left(\frac{Pb_i-z_i}{Q!}\right)\prod_{i\in I_2} \bet_{\tau_i+1} \left(\frac{Pa_i-z_i}{Q!}\right).
\end{equation*}
If we use the definition of $S_{(I_1,I_2,\bftau)}(P;r',Q!)$ in the last sum in (\ref{aux9}), we see that
\begin{equation*}
\grS_{(I_1,I_2,\bftau)}(P,n)= (Q!)^{-|I_3|+|\bftau|+1} \sum_{0\leq \bfz <Q!} \mathbf{1}_{\{F(\bfz)\equiv  n\mmod Q!\}} \bet_\bftau(\bfz,Q!)+O(Q^{-\del}).
\end{equation*}

We state our observations in the following lemma.

\begin{lemma}\label{S2}
Let $(I_1,I_2,\bftau)\in \calI (K)$, and assume that 
\begin{equation*}
2^{-d+1}\frac{s-\sig}{d-1}>K+1.
\end{equation*}
Let $Q\geq 1$. Then there is some $\del >0$ such that
\begin{equation*}
\grS_{(I_1,I_2,\bftau)}(P,n) = (Q!)^{-|I_3|+|\bftau|+1} \sum_{0\leq \bfz <Q!} \mathbf{1}_{\{F(\bfz)\equiv  n\mmod Q!\}} \bet_\bftau(\bfz,Q!) +O(Q^{-\del}).
\end{equation*}
\end{lemma}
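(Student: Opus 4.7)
The plan is to follow closely the computation already outlined in the paragraphs preceding the statement; essentially every ingredient has been set up, and the proof amounts to assembling Lemma \ref{singseries} (to truncate the series) with Lemma \ref{lll} (to normalise all denominators to $Q!$) and finishing by orthogonality.

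First I would invoke Lemma \ref{singseries}. The hypothesis $2^{-d+1}(s-\sig)/(d-1)>K+1$ combined with the tail bound from that lemma shows $\grS_{(I_1,I_2,\bftau)}(P,n)-\grS_{(I_1,I_2,\bftau)}(P,n;Q)\ll Q^{-\del}$ for some $\del>0$. Since every $q\leq Q$ divides $Q!$, and any extra terms coming from $Q<q\leq Q!$ with $q\mid Q!$ are a subset of the same absolutely convergent tail, the extension from $q\leq Q$ to $q\mid Q!$ is also controlled by $O(Q^{-\del})$. This yields
\[
\grS_{(I_1,I_2,\bftau)}(P,n)=\sum_{q\mid Q!}\sum_{\substack{r=1\\(r,q)=1}}^{q} q^{-|I_3|+|\bftau|} S_{(I_1,I_2,\bftau)}(P;r,q)\,e(-rn/q) + O(Q^{-\del}),
\]
which is exactly the starting identity used by the author.

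Next I would apply Lemma \ref{lll} with $d=Q!/q$ to each term, so that
\[
q^{-|I_3|+|\bftau|} S_{(I_1,I_2,\bftau)}(P;r,q)=(Q!)^{-|I_3|+|\bftau|}S_{(I_1,I_2,\bftau)}(P;rd,Q!),
\]
and $e(-rn/q)=e(-rdn/Q!)$. The map $(r,q)\mapsto r'=rd$ defined on pairs with $(r,q)=1$ and $q\mid Q!$ is a bijection onto $\{1,\dots,Q!\}$ (the inverse recovers $q=Q!/\gcd(r',Q!)$), so the double sum collapses to
\[
\grS_{(I_1,I_2,\bftau)}(P,n)=(Q!)^{-|I_3|+|\bftau|}\sum_{r'=1}^{Q!} S_{(I_1,I_2,\bftau)}(P;r',Q!)\,e(-r'n/Q!)+O(Q^{-\del}).
\]

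Finally I would unfold the definition of $S_{(I_1,I_2,\bftau)}(P;r',Q!)$, interchange the $\bfz$ and $r'$ summations, and apply the additive orthogonality relation
\[
\sum_{r'=1}^{Q!} e\bigl(r'(F(\bfz)-n)/Q!\bigr)=Q!\cdot \mathbf{1}_{\{F(\bfz)\equiv n\,(\bmod\,Q!)\}}.
\]
The Bernoulli-polynomial factors in $S_{(I_1,I_2,\bftau)}(P;r',Q!)$ then collapse into exactly $\bet_\bftau(\bfz,Q!)$, giving the weighted congruence count in the stated formula. I do not anticipate any real obstacle here, since all the conceptual content — the rescaling of denominators to a common modulus via the multiplication theorem for Bernoulli polynomials — is already packaged in Lemma \ref{lll}; what remains is careful bookkeeping of the reindexation and of the exponent $-|I_3|+|\bftau|+1$ (the extra $+1$ arises from the factor $Q!$ produced by orthogonality).
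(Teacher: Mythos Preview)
Your proposal is correct and follows essentially the same approach as the paper: truncate via Lemma~\ref{singseries}, lift all denominators to $Q!$ using Lemma~\ref{lll}, reindex the double sum as a single sum over $r'\in\{1,\dots,Q!\}$, and finish by orthogonality. Your write-up is in fact slightly more careful than the paper's, spelling out both the justification for passing from $q\le Q$ to $q\mid Q!$ and the explicit inverse of the bijection $(r,q)\mapsto r'=r\,Q!/q$.
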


One can interpret the expression for $\grS_{(I_1,I_2,\bftau)}(P,n)$ in Lemma \ref{S2} as a weighted version of the counting function $F(\bfz)\equiv n$ modulo $Q!$. We use Lemma \ref{S2} to further understand the singular series $\grS_{(I_1,I_2,\bftau)}(P,n)$, which occurr in the terms of largest order directly after the main term in the expansion in Theorem \ref{thm1}. They correspond to situations where $I_1\cup I_2=\{i_0\}$ for a single element $1\leq i_0\leq s$ and $|\bftau|=0$. We assume in the following that $Q!|Pb_{i_0}$ or $Q!|Pa_{i_0}$ depending on whether $i_0\in I_1$ or $i_0\in I_2$. Under some symmetry assumptions on the form $F(\bfx)$ we can rewrite $\grS_{(I_1,I_2,\bftau)}(P,n)$ as a local density, up to a small error, and in particular determine its sign.

\begin{lemma}\label{S3}
In addition to the assumptions in Lemma \ref{S2}, let $I_1\cup I_2=\{i_0\}$, $|\bftau|=0$ and $Q!|Pa_{i_0}$ if $i_0\in I_2$ or $Q!|Pb_{i_0}$ if $i_0\in I_1$. Furthermore assume that the counting function
\begin{equation*}
\begin{split}
r(z_{i_0},Q!,n):=\sharp\{ \bfz_{I_3} \mmod Q!: F(\bfz_{I_3},z_{i_0})\equiv n \mmod Q!\} 
\end{split}
\end{equation*}
satisfies $r(z_{i_0},Q!,n)=r(-z_{i_0},Q!,n)$ for all $z_{i_0}$ modulo $Q!$. Then one has
\begin{equation*}
\grS_{(I_1,I_2,\bftau)}(P,n)=\frac{1}{2}(-1)^{|I_1|+1}(Q!)^{-s+1} r(0,Q!,n)+O(Q^{-\del}),
\end{equation*}
for some $\del >0$.
\end{lemma}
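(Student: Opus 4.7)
The natural strategy is to treat Lemma \ref{S3} as a direct specialization of Lemma \ref{S2}. Starting from the formula
\begin{equation*}
\grS_{(I_1,I_2,\bftau)}(P,n) = (Q!)^{-|I_3|+|\bftau|+1}\sum_{0\le \bfz<Q!}\mathbf{1}_{\{F(\bfz)\equiv n\,\mathrm{mod}\, Q!\}}\bet_\bftau(\bfz,Q!) + O(Q^{-\del}),
\end{equation*}
I would exploit the fact that under the hypotheses $I_1\cup I_2=\{i_0\}$ and $|\bftau|=0$ the weight $\bet_\bftau(\bfz,Q!)$ collapses to a single factor $\bet_1$ in the distinguished coordinate $z_{i_0}$ and is otherwise constant. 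Splitting the $\bfz$-sum according to the value of $z_{i_0}$ then rewrites the inner sum as
\begin{equation*}
\varrho_{(I_1,I_2,0)} \sum_{z_{i_0}=0}^{Q!-1} \bet_1\!\left(\tfrac{Pc_{i_0}-z_{i_0}}{Q!}\right) r(z_{i_0},Q!,n),
\end{equation*}
where $c_{i_0}=b_{i_0}$ if $i_0\in I_1$ and $c_{i_0}=a_{i_0}$ if $i_0\in I_2$.

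The heart of the proof is a cancellation argument. The divisibility assumption $Q!\mid Pc_{i_0}$ combined with the $1$-periodicity of $\bet_1$ reduces the weight to $\bet_1(-z_{i_0}/Q!)$. Evaluating directly from $\bet_1(x)=\{x\}-\tfrac12$, one has $\bet_1(-z/Q!)=\tfrac12-z/Q!$ and $\bet_1(z/Q!)=z/Q!-\tfrac12$ for $1\le z\le Q!-1$, so the two are exactly negatives of each other, whereas at the boundary integer point $\bet_1(0)=-\tfrac12$. Using the assumed symmetry $r(z_{i_0},Q!,n)=r(-z_{i_0},Q!,n)=r(Q!-z_{i_0},Q!,n)$ to pair the summand at $z_{i_0}$ with the summand at $Q!-z_{i_0}$, every contribution with $z_{i_0}\neq 0$ cancels, leaving only the single term $z_{i_0}=0$.

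What survives is then $\varrho_{(I_1,I_2,0)}\cdot(-\tfrac12)\cdot r(0,Q!,n)$ times the prefactor from Lemma \ref{S2}. Unpacking $\varrho_{(I_1,I_2,0)}=(-1)^{|I_2|}\prod_{i\in I_1\cup I_2}(-1)^{\tau_i+1}/(\tau_i+1)! = (-1)^{|I_2|+1}$ (since $|I_1\cup I_2|=1$) and combining with the $-\tfrac12$ from $\bet_1(0)$ and the identity $(-1)^{|I_2|+1}\cdot(-1)=(-1)^{|I_2|}=(-1)^{|I_1|+1}$ (valid because $|I_1|+|I_2|=1$) yields the sign $\tfrac12(-1)^{|I_1|+1}$ claimed in the statement. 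The whole argument is elementary once the cancellation step is in place; the only slightly subtle point, and the main obstacle, is handling the exceptional boundary value $\bet_1(0)=-\tfrac12$ correctly, since it is the break from full antisymmetry of $\bet_1$ that produces the nonzero main contribution.
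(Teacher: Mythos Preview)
Your proposal is correct and follows essentially the same route as the paper's proof: apply Lemma~\ref{S2}, use the divisibility hypothesis together with the $1$-periodicity of $\bet_1$ to reduce the weight to $\bet_1(-z_{i_0}/Q!)$, separate the term $z_{i_0}=0$, and then pair $z_{i_0}$ with $Q!-z_{i_0}$ so that the symmetry $r(z_{i_0},Q!,n)=r(Q!-z_{i_0},Q!,n)$ combined with the identity $(\tfrac12-\tfrac{z_{i_0}}{Q!})+(\tfrac12-\tfrac{Q!-z_{i_0}}{Q!})=0$ kills everything except the boundary contribution $\bet_1(0)=-\tfrac12$. Your sign bookkeeping via $\varrho_{(I_1,I_2,0)}=(-1)^{|I_2|+1}=(-1)^{|I_1|}$ also matches the paper's computation.
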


\begin{proof}
By Lemma \ref{S2} we can express a truncated version of the singular series $\grS_{(I_1,I_2,\bftau)}(P,n)$ as 
\begin{equation*}
\grS_{(I_1,I_2,\bftau)}(P,n)= (-1)^{|I_1|}(Q!)^{-s+1} \sum_{0\leq z_{i_0}<Q!}\bet_1\left(\frac{-z_{i_0}}{Q!}\right) r(z_{i_0},Q!,n) +O(Q^{-\del}).
\end{equation*}
Recall that the first Bernoulli polynomial $B_1(x)$ is defined as $B_1(x)=x-\frac{1}{2}$. We hence rewrite the last expression as
\begin{equation}\label{aux3}
\begin{split}
\grS_{(I_1,I_2,\bftau)}(P,n)= &(-1)^{|I_1|}(Q!)^{-s+1}\bet_1(0)r(0,Q!,n) \\ &+ \sum_{0< z_{i_0}<Q!}\left(\frac{1}{2}-\frac{z_{i_0}}{Q!}\right) r(z_{i_0},Q!,n) +O(Q^{-\del}).
\end{split}
\end{equation}
By assumption we have $r(z_{i_0},Q!,n)=r(Q!-z_{i_0},Q!,n)$ and we observe that
\begin{equation*}
\frac{1}{2}-\frac{z_{i_0}}{Q!}+\frac{1}{2}-\frac{Q!-z_{i_0}}{Q!}=0.
\end{equation*}
Hence the second sum in (\ref{aux3}) vanishes and we obtain
\begin{equation*}
\grS_{(I_1,I_2,\bftau)}(P,n)= (-1)^{|I_1|}(Q!)^{-s+1}\bet_1(0)r(0,Q!,n)+O(Q^{-\del}),
\end{equation*}
as desired.
\end{proof}

We remark that Lemma \ref{S2} is useful in determining the sign of $\grS_{(I_1,I_2,\bftau)}(P,n)$ and showing that these singular series are non-zero under certain conditions. The counting function $r(z_{i_0},Q!,n)$ is always non-negative, and furthermore, the term $(Q!)^{-s+1} r(0,Q!,n)$ can be shown to be positive under the assumption of the existence of non-singular local solutions for all finite primes. As an example, we compare this to Theorem 1.4 in \cite{VauWooA14} for the special case of $F(\bfx)=\sum_{i=1}^s x_i^d$. As we shall see in the next section one has
\begin{equation*}
\grS_{\emptyset,I_2,\mathbf{0}}(P,n)= \grS_{s,|I_2|}(n),
\end{equation*}
where the singular series on the right hand side is defined as in Theorem 1.4 in \cite{VauWooA14}. In the case where $|I_2|=1$ and $Q!|n$, the symmetry assumption on $r(z_{i_0},Q!,n)$ in Lemma \ref{S3} is satisfied and this shows that
\begin{equation*}
\grS_{s,|I_2|}(n)= -\frac{1}{2}(Q!)^{-s+1} r(0,Q!,n)+O(Q^{-\del}).
\end{equation*}
Observe that $(Q!)^{-s+1} r(0,Q!,n)$ converges to $\grS_{s-1}(n)$ for $Q\rightarrow \infty$ (by applying the Chinese remainder theorem) and hence we recover, up to a less precise error term, the result of Theorem 1.4 in \cite{VauWooA14}. We note that our assumptions on $s$ are of course much stronger than those in \cite{VauWooA14}, since we applied a result for a very general form $F(\bfx)$ to the sum of $s$ $d$-th powers. However, we could feed our method on the minor arcs with mean value estimates for sums of $d$th powers instead, and recover results of comparable strength in $s$.

\section{Comparison to work of Vaughan and Wooley}\label{comparison}
In this section we compare our main theorem \ref{thm1} with the results of Vaughan and Wooley \cite{VauWooA14} in the case $F(\bfx)=\sum_{i=1}^s x_i^d$. We note that this is the only polynomial that Vaughan and Wooley consider in their paper and their methods are very much adapted to using the diagonal structure of the equations, not only on the minor, but also on the major arcs. For $d>1$ and a positive natural number $n$ they study the counting function $R_s^{VW}(n)$, which is defined to be the number of representations of $n$ as the sum of $s$ $d$th powers of positive integers. In our notation one has $R_s^{VW}(n)=R_\calB(P,n)$ with $\calB = (0,1]^s$ and $P=2n^{1/d}$.\par
The study of the counting function $R_s^{VW}(n)$ has a long history which we do not aim to repeat here. However, only very recently Vaughan and Wooley \cite{VauWooA14} for the first time gave an asymptotic expansion for $R_s^{VW}(n)$ which has the form of a higher order expansion, i.e. describes not only the first leading term but also lower order terms. Let $J$ be some non-negative integer. If $d$ is even and $s$ sufficiently large depending on $J$ and $d$, then Theorem 1.1 in \cite{VauWooA14} states that
\begin{equation}\label{VW3}
R_s^{VW}(n)= n^{s/d-1}\left(\grC_0+\grC_1n^{-1/d}+\ldots + \grC_J n^{-J/d}\right)+o(n^{(s-J)/d-1}),
\end{equation}
where the constants $\grC_j$ are specified in equation (1.6) in \cite{VauWooA14} for $0\leq j\leq J$. In detail, one has
\begin{equation*}
\grC_j= \left(-\frac{1}{2}\right)^j \binom{s}{j} \frac{\Gam(1+1/d)^{s-j}}{\Gam((s-j)/d)} \grS_{s-j}(n),
\end{equation*}
where the singular series $\grS_{s-j}(n)$ is defined as
\begin{equation*}
\grS_s(n)= \sum_{q=1}^\infty \sum_{\substack{a=1\\ (a,q)=1}}^q \left(q^{-1}S(q,a)\right)^s e(-na/q),
\end{equation*}
with 
\begin{equation*}
S(q,a)=\sum_{r=1}^q e(ar^d/q).
\end{equation*}
In the case of odd $d$ Vaughan and Wooley prove the asymptotic expansion (\ref{VW3}) for $0\leq J \leq d$ and $s$ sufficiently large depending on $d$ and $J$, with constants of the form
\begin{equation*}
\grC_j= \binom{s}{j} \frac{\Gam(1+1/d)^{s-j}}{\Gam((s-j)/d)}\grS_{s,j}(n).
\end{equation*}
Here the singular series $\grS_{s,j}(n)$ is defined as
\begin{equation*}
\grS_{s,j}(n)= \sum_{q=1}^\infty \sum_{\substack{a=1\\ (a,q)=1}}^q (q^{-1}S(q,a))^{s-j} T(q,a)^j e(-na/q),
\end{equation*}
with
\begin{equation*}
T(q,a)= \sum_{r=1}^q \left(\frac{1}{2}-\frac{r}{q}\right)e(ar^d/q).
\end{equation*}
We note that in the case of $d$ even one has
\begin{equation*}
T(q,a)=-\frac{1}{2}+\sum_{r=1}^{q-1}\left(\frac{1}{2}-\frac{r}{q}\right)e(ar^d/q) = -\frac{1}{2}.
\end{equation*}
Hence the definition of $\grC_j$ for the case of $d$ odd also covers the case of $d$ even.\par
Our Theorem \ref{thm1} applied to $R_\calB (P,n)$ and $K=J+1$ delivers for $s$ sufficiently large the asymptotic expansion
\begin{equation}\label{VW6}
\begin{split}
R(P,n)&= \sum_{(I_1,I_2,\bftau)\in \calI(J+1)} \grS_{(I_1,I_2,\bftau)}(P,n)\calJ_{(I_1,I_2,\bftau)} (P^{-d}n) P^{|I_3|-|\bftau|-d} \\ &+O\left( P^{s-d-J-\del}\right),
\end{split}
\end{equation}
for some $\del >0$. We note that our assumptions on $s$ in Theorem \ref{thm1} are much stronger than those in Theorem 1.1 and Theorem 1.2 in \cite{VauWooA14}. This should not be surprising, since our main theorem is mainly aimed at general forms $F(\bfx)$ and hence we use on the minor arcs the bounds for exponential sum from the work of Birch \cite{Bir62}. We could implement the minor arc bounds used in \cite{VauWooA14} in our work for the special situation of $F(\bfx)=\sum_{i=1}^s x_i^d$, and obtain the same restrictions on $s$ with our methods.\par
In the following we aim to show that the asymptotic expansion (\ref{VW6}) coincides with the one given by Vaughan and Wooley of the form (\ref{VW3}). We note that our asymptotic expansion looks more technical than the one of Vaughan and Wooley, since it covers general forms $F(\bfx)$ and does not need the restriction $J\leq d$ as in Theorem 1.2 in \cite{VauWooA14}. We will shortly see that most of the terms in our expansion (\ref{VW6}) vanish in the special case of Waring's problem. However, these additional terms are necessary to cover the general case of representing an integer $n$ by a general form $F(\bfx)$ of degree $d$.\par
We recall that
$$
\grS_{(I_1,I_2,\bftau)}(P,n)=\sum_{q=1}^\infty \sum_{\substack{r=1 \\ (r,q)=1}}^q q^{-|I_3|+|\bftau|}e\left(\frac{-r}{q}n\right)S_{(I_1,I_2,\bftau)}(P;r,q),
$$
with
\begin{equation*}
\begin{split}
S_{(I_1,I_2,\bftau)}(P;r,q)=&\sum_{0\leq \bfz<q}e\left(\frac{r}{q}F(\bfz)\right) \left(\prod_{i\in I_1} \frac{(-1)^{\tau_i+1}}{(\tau_i+1)!}\bet_{\tau_i+1}\left(\frac{Pb_i-z_i}{q}\right)\right) \\ &\left(\prod_{i\in I_2} \frac{(-1)^{\tau_i}}{(\tau_i+1)!}\bet_{\tau_i+1}\left(\frac{Pa_i-z_i}{q}\right)\right) .
\end{split}
\end{equation*}
We first consider the case where $I_1=\emptyset$ and $\tau_i=0$ for all $1\leq i\leq s$. Then the description of this exponential sum reduces to
\begin{equation*}
S_{(\emptyset,I_2,\mathbf{0})}(P;r,q)= \left(\sum_{z =1}^q \bet_1\left(\frac{-z}{q}\right)e\left(\frac{r}{q}z^d\right)\right)^{|I_2|} S(q,r)^{s-|I_2|}.
\end{equation*}
We recall that 
$\bet_1(x)=\left\{x\right\}-\frac{1}{2}$, and hence we deduce that
\begin{equation*}
\begin{split}
\sum_{z=1}^q \bet_1\left(\frac{-z}{q}\right)e\left(\frac{r}{q}z^d\right)
= \sum_{z=1}^q \left(\frac{1}{2}-\frac{z}{q}\right)e\left(\frac{r}{q}z^d\right) = T(q,r).
\end{split}
\end{equation*}
Hence we obtain
\begin{equation*}
\grS_{(\emptyset,I_2,\mathbf{0})}(P,n)= \sum_{q=1}^\infty \sum_{\substack{r=1\\ (r,q)=1}}^q \left(q^{-1}S(q,r)\right)^{s-|I_2|}T(q,r)^{|I_2|}e\left(-\frac{r}{q}n\right) = \grS_{s,|I_2|}(n).
\end{equation*}
Note that $\calJ_{(\emptyset,I_2,\mathbf{0})}(n)$ is the same as the traditional singular integral with respect to the equation $\sum_{i=1}^{s-|I_2|}x_i^d=n$ and the box $(0,1]^{s-|I_2|}$. It is well known that
\begin{equation*}
\calJ_{(\emptyset,I_2,\mathbf{0})}(1)=\frac{\Gam(1+1/d)^{s-|I_2|}}{\Gam((s-|I_2|)/d)},
\end{equation*}
see for example the proof of Theorem 4.1 in \cite{Dav2005}. Furthermore, we note that $\calJ_{(\emptyset,I_2,\mathbf{0})}(t)$ has a scaling property in $t>0$ of the form
\begin{equation*}
\begin{split}
\calJ_{(\emptyset,I_2,\mathbf{0})}(t)&=\int_\R e(-\gam t) \left(\int_0^1 e(\gam x^d)\d x\right)^{s-|I_2|}\d \gam \\ &= t^{-1}\int_\R e(-\gam ) \left(\int_0^1 e(t^{-1}\gam x^d)\d x\right)^{s-|I_2|}\d \gam\\ &=t^{(s-|I_2|)/d-1}\int_\R e(-\gam ) \left(\int_0^{t^{-1/d}} e(\gam x^d)\d x\right)^{s-|I_2|}\d \gam.
\end{split}
\end{equation*}
Interpreting this expression as in the proof of Theorem 4.1 in \cite{Dav2005} as a volume integral shows that
\begin{equation*}
\begin{split}
\calJ_{(\emptyset,I_2,\mathbf{0})}(P^{-d}n)&= \calJ_{(\emptyset,I_2,\mathbf{0})}(2^{-d}) =2^{d-(s-|I_2|)} \int_\R e(-\gam ) \left(\int_0^{2} e(\gam x^d)\d x\right)^{s-|I_2|}\d \gam\\ &=  2^{d-(s-|I_2|)}\frac{\Gam(1+1/d)^{s-|I_2|}}{\Gam((s-|I_2|)/d)}.
\end{split}
\end{equation*}
If we consider some fixed $0\leq j\leq J$, then there are exactly $\binom{s}{j}$ terms in the sum in (\ref{VW6}) with $|I_2|=j$ and $I_1=\emptyset$ and $\bftau=0$. All of them are the same and equal to
\begin{equation*}
\grS_{(\emptyset,I_2,\mathbf{0})}(P,n)\calJ_{(\emptyset,I_2,\mathbf{0})}(P^{-d}n)P^{s-|I_2|-d} = \grS_{s,j}(n) \frac{\Gam(1+1/d)^{s-j}}{\Gam((s-j)/d)}n^{(s-j)/d-1}.
\end{equation*}
Hence they exactly match the asymptotic expansion of Vaughan and Wooley as in (\ref{VW3}). In order to show that the two asymptotic expansions in (\ref{VW3}) and (\ref{VW6}) coincide, it hence remains to show that all the terms in \ref{VW6} with $I_1\neq \emptyset$ or $\bftau\neq \mathbf{0}$ vanish.\par
We start with the case where $I_1\neq \emptyset$. If we assume that (\ref{integralconv1}) holds, then equation \ref{SingIntVol} shows that $\calJ_{(I_1,I_2)}(P^{-d}n;\bfx_{I_1},\bfx_{I_2})=0$ for $\bfx_{I_1}$ in a sufficiently small neighbourhood of the constant vector $\mathbf{1}$ and $\bfx_{I_2}$ in a small neighbourhood of the constant vector $\mathbf{0}$. Indeed, the volume of the hypersurface $F(\bfx)=2^{-d}$ in the cube $\prod_{i\in I_3}[0,1]$ is zero for $\bfx_{I_1}$ close to $\mathbf{1}$ and $\bfx_{I_2}$ close to $\mathbf{0}$. Hence Lemma \ref{SingIntb} shows that 
\begin{equation*}
\calJ_{(I_1,I_2,\bftau)}(P^{-d}n)=0,
\end{equation*}
as soon as $I_1\neq \emptyset$.\par
Next we consider the case where $I_1=\emptyset$ and $\bftau \neq \mathbf{0}$. Note that then we may assume that there is some $i_0\in I_2$ with $\tau_{i_0}>0$. Furthermore, we observe that
\begin{equation*}
\frac{\partial^\nu}{\partial x^\nu}e(\gam x^d)|_{x=0}=0,\quad \mbox{ for } \quad 1\leq \nu <d.
\end{equation*}
For odd degree $d$, Vaughan and Wooley assume that $J\leq d$, which implies that $|\tau_{i_0}|<d$. Hence we obtain in this situation $f^{(\bftau)}(\gam,\sig_{\bfa,\bfb}(\bfx))=0$, as a function in $\bfx_{I_3}$ and hence $J_{(I_1,I_2,\bftau)}(\gam)=0$ and $\calJ_{(I_1,I_2,\bftau)}(P^{-d}n)=0$.\par
Finally, we need to consider the case where $d$ is even and $I_1=\emptyset$ and $\bftau\neq \mathbf{0}$. Hence we may assume that $\tau_i>0$ for some $i\in I_2$. First assume that $\tau_i$ is even. Then we have $\bet_{\tau_i +1}(0)=0$ and the symmetry relation $\bet_{\tau_i +1}(x)= - \bet_{\tau_i +1}(1-x)$ holds. Hence we see that
\begin{equation*}
\sum_{0\leq z<q} e\left(\frac{r}{q}z^d\right)\bet_{\tau_i +1}\left(\frac{-z}{q}\right) = \sum_{0\leq z\leq q} e\left(\frac{r}{q}z^d\right)\bet_{\tau_i +1}\left(\frac{-z}{q}\right) =0.
\end{equation*}
This implies that $\grS_{(I_1,I_2,\bftau)}(P,n)=0$. In the case where $\tau_i$ is odd, we observe that 
\begin{equation*}
\frac{\partial^{\tau_i}}{\partial x^{\tau_i}}e(\gam x^d)|_{x=0}=0,
\end{equation*}
since we have assumed $d$ to be even. This shows that $\calJ_{(I_1,I_2,\bftau)}(P^{-d}n)=0$.\par
We hence conclude that the asymptotic expansions in (\ref{VW3}) and (\ref{VW6}) are indeed the same, i.e. our Theorem \ref{thm1} reduces in the case $F(\bfx)=\sum_{i=1}^s x_i^d$ to the multi-term expansion computed by Vaughan and Wooley in \cite{VauWooA14}.

\bibliographystyle{amsbracket}
\providecommand{\bysame}{\leavevmode\hbox to3em{\hrulefill}\thinspace}

\end{document}